\author{
  \textbf{Matthias Schötz}\thanks{\texttt{matthias.schoetz@mathematik.uni-wuerzburg.de}} 
  \\[0.5cm]
  Institut für Mathematik \\
  Universität Würzburg
}
\renewcommand{\mathbb}[1]{\mathbbm{#1}}
\numberwithin{equation}{section}
\renewcommand{\arraystretch}{1.2}
\let\originalleft\left
\let\originalright\right
\renewcommand{\left}{\mathopen{}\mathclose\bgroup\originalleft}
\renewcommand{\right}{\aftergroup\egroup\originalright}
\newtheorem{lemma}{Lemma}[section]
\newtheorem{proposition}[lemma]{Proposition}
\newtheorem{theorem}[lemma]{Theorem}
\newtheorem{corollary}[lemma]{Corollary}
\newtheorem{definition}[lemma]{Definition}
\newtheorem{example}[lemma]{Example}
\def\theorem@checkbold{}
\theoremstyle{nonumberplain}
\newtheorem{proof}{\textbf{Proof}}
\newcommand{\I}              {\mathrm{i}}
\newcommand{\E}              {\mathrm{e}}
\newcommand{\cc}[1]          {\overline{{#1}}}
\newcommand{\Unit}           {\mathbb{1}}
\newcommand{\argument}       {\,\cdot\,}
\newcommand{\acts}            {\mathbin{\triangleright}}
\DeclarePairedDelimiter{\abs}{\lvert}{\rvert}
\DeclarePairedDelimiter{\norm}{\lVert}{\rVert}
\newcommand{\hilbert}[1]       {\mathfrak{#1}}
\DeclareMathOperator{\Var} {\mathrm{Var}}
\DeclarePairedDelimiter{\ordinaryIP}{\langle}{\rangle}
\DeclarePairedDelimiter{\ordinarySet}{\{}{\}}
\newcommand{\CC}{\mathbb{C}}
\newcommand{\RR}{\mathbb{R}}
\newcommand{\NN}{\mathbb{N}}
\newcommand{\neu}[2][]{\emph{#2}}
\newcommand{\dupr}[3][]{\ordinaryIP[#1]{\,#2 \,,\, #3\,}}
\newcommand{\skal}[3][]{\ordinaryIP[#1]{\,#2 \,#1|\, #3\,}}
\newcommand{\set}[3][]{\ordinarySet[#1]{\,#2 \;#1|\; #3\,}}
\newcommand{\Hermitian}{\textup{H}}
\newcommand{\States}{\mathcal{S}}
\newcommand{\PureStates}{\States_{\textup{p}}}
\newcommand{\Characters}{\mathcal{M}}
\newcommand{\cl}{\textup{cl}}
\newcommand{\seminorm}[3][]{\norm[#1]{#3}_{#2}}
\newcommand{\Adbar}{\mathcal{L}^*}
\newcommand{\Dom}[1][]{\mathcal{D}_{\smash{#1}}}
\newcommand{\Hilb}{\hilbert{H}}
\newcommand{\wk}{\textup{wk}}
\newcommand{\st}{\textup{st}}
\newcommand{\Gelfand}{\mathcal{I}}
\newcommand{\algebra}[1]{\mathcal{#1}}
\newcommand{\A}{\algebra{A}}
\newcommand{\Bounded}{\mathcal{B}}
\title{\texorpdfstring{On Characters and Pure States of $^*$-Algebras\\\small (Der Trilogie erster Teil)}{On Characters and Pure States of *-Algebras}}
\date{\today}
\begin{document}
\begin{onehalfspace}

\maketitle

\begin{abstract}
  It is easy to see that every character (i.e. unital $^*$\=/homomorphism to $\CC$)
  of a commutative unital associative $^*$\=/algebra is a pure state (i.e. extreme point in 
  the convex set of all normalized positive linear functionals). This article gives
  sufficient conditions for the converse to be true as well. In order to formulate
  these results together with similar ones, e.g. for locally convex $^*$\=/algebras, the notion
  of an abstract $O^*$\=/algebra (unital associative $^*$\=/algebra with an order defined by
  positive linear functionals) is introduced. Many concepts and intermediary results
  discussed here also apply to the non-commutative case.
\end{abstract}

\section{Introduction}
\label{sec:Introduction}
Let $\A$ be a unital associative $^*$\=/algebra over $\CC$, then denote by $\A^*$ the 
(algebraic) dual space of $\A$, consisting of all linear functionals to $\CC$, and
by $\dupr{\argument}{\argument}\colon \A^*\times \A \to \CC$ the dual pairing.
An \neu{algebraically positive linear functional} on $\A$ is an element 
$\omega\in\A^*$ for which $\dupr{\omega}{a^*a}\ge 0$ holds for all $a\in\A$.
An \neu{algebraic state} on $\A$ is an algebraically positive linear functionals
$\omega$ that is normalized to $\dupr{\omega}{\Unit} = 1$,
and an \neu{algebraic character} on $\A$ is a unital $^*$\=/homomorphism from $\A$ to $\CC$.
An \neu{extreme point} of a convex subset $C$ of a (real or complex) vector space is
an $e\in C$ with the property that $e=\lambda c_1 + (1-\lambda) c_2$ with
$\lambda \in\,\,]0,1[\,$ and $c_1,c_2 \in C$ implies $e=c_1=c_2$.
Clearly, the set of algebraic states on $\A$ is a convex subset of $\A^*$ and 
every algebraic character on $\A$ is an algebraic state, and even an extreme point 
in the set of all algebraic states (well-known, see Proposition~\ref{proposition:charactersAreExtreme}).

This raises the question
of whether or not every extreme point of the set of algebraic states is also an algebraic character
(at least in the commutative case).
Naturally, a similar question can also be asked if $\A$ is endowed with additional
structure, e.g. a locally convex topology: in this case one could consider extreme
points of the convex set of continuous algebraic states.
As it is not clear whether these are also extreme points in the larger set of all algebraic states,
this cannot simply be reduced to the previous problem.

Having a general characterization of characters as pure states is also interesting for several other reasons:
First, of course, this yields a possibility to prove existence of characters if one can
prove existence of pure states by applying the Krein-Milman theorem or generalizations thereof (like 
\cite{neighbourhoodsofextremepoints}, Theorem~3.6) to the set of states. Due to the Gel'fand
transformation, existence of characters is equivalent to existence of non-trivial representations
of a $^*$\=/algebra by complex-valued functions with the pointwise operations.
Moreover, the set of algebraic characters is quite obviously weak-$^*$-closed, which is not clear at all
for the set of pure algebraic states. If these sets coincides, this thus gives insight into the structure
of the convex set of states.

From the point of view of non-commutative geometry, pure states of a $^*$-algebra
can be seen as the non-commutative generalisation of characters, at least for those
classes of $^*$-algebras, for which the sets of pure states and characters
actually coincide under the additional assumption of commutativity.

In the case that $\A$ is a commutative unital Banach $^*$\=/algebra, it has been shown
directly  by R. S. Bucy and G. Maltese (in \cite{reptheoforposfun}, Theorem 2)
that every extreme point of the set of continuous algebraic states is also multiplicative,
hence a unital $^*$\=/homomorphism. The argument used there can be adapted to more general 
cases (see also \cite{gaur}), up to lmc $^*$\=/algebras, but always requires a
certain boundedness condition to be fulfilled (see Theorem \ref{theorem:bounded} here).

The main Theorem~\ref{theorem:unbounded} here gives a
sufficient condition for the sets of pure states and characters to coincide,
that can be applied to truly unbounded cases. For this to be applicable to as 
many different types of $^*$\=/algebras as possible, the notion of an abstract $O^*$\=/algebra
is introduced (Definition \ref{definition:abstractOstaralgebra}).

Another motivation for introducing (also non-commutative) abstract $O^*$\=/algebra comes from non-formal
deformation quantisation, where some recent examples of deformations of locally
convex $^*$-algebras outside the realm of $C^*$-algebras (or even lmc-$^*$-algebras)
have been constructed (like \cite{beiser.waldmann:2014a}, \cite{esposito.stapor.waldmann:2017a}
and \cite{schoetz.waldmann:2018a}). While these algebras can contain truely unbounded
elements, e.g. elements fulfilling canonical commutation relations, it seems
that there is not yet a general theory available that answers some questions
arising from their interpretation as observable algebras of physical systems,
e.g. concerning the sprectrum of an observable or the decomposition of arbitrary
states of the commutative, classical limits into pure states or characters.
Focusing not only on the topology of these algebras, but also on their order properties,
might help to solve these problems.

The article is organized as follows: Section~\ref{sec:Defs} gives the definition
of $O^*$\=/algebras and abstract $O^*$\=/algebras and discusses some basic constructions,
especially the (closed) GNS representation. In Section~\ref{sec:algebra},
some essentially algebraic properties of abstract $O^*$\=/algebras are examined and
used to prove that all characters of abstract $O^*$\=/algebras are pure states,
and that the converse is true in commutative symmetric abstract $O^*$\=/algebras 
(which, by definition, have many invertible elements). As the assumption of a
symmetric abstract $O^*$\=/algebra is rather strong, Section~\ref{sec:bounded}
seeks to replace this by a less restrictive one, mostly a condition for the
growth of powers of algebra elements that guarantees that all GNS representations
are by bounded operators. The argumentation up to that point is inspired by \cite{reptheoforposfun}
and \cite{gaur}. Finally, Section~\ref{sec:unbounded} uses techniques
from the theory of unbounded operator algebras (i.e. $O^*$\=/algebras)
to proof the main Theorem~\ref{theorem:unbounded},
which essentially states that all pure states of a commutative abstract $O^*$\=/algebra are characters,
if a growth condition (similar to the one in Nelson's theorem) on powers of 
algebra elements is fulfilled that guarantees 
that sufficiently many elements yield essentially self-adjoint operators
in every GNS representation.

\textbf{Notation}: 
A \neu{$^*$\=/algebra} $\A$ will always be understood to be defined over the field of complex numbers,
be associative and have a unit $\Unit$. Moreover, $\A_\Hermitian := \set{a\in \A}{a=a^*}$
is the real linear subspace of $\A$ of \neu{Hermitian} elements and its
convex cone of \neu{algebraically positive} elements 
(which might contain a nontrivial linear subspace) is
\begin{equation*}
  \A^{++}_\Hermitian := \set[\Big]{\sum\nolimits_{n=1}^N a_n^*a_n}{N\in\NN;\,a_1,\dots,a_N\in \A}\,,
\end{equation*}
and is indeed closed under multiplication with non-negative reals.
The real linear span of $\A^{++}_\Hermitian$ is $\A_\Hermitian$ because $4a = (a+\Unit)^2-(a-\Unit)^2$
for all $a\in \A_\Hermitian$, and so the (complex) linear span of $\A^{++}_\Hermitian$ is whole $\A$.
However, in most cases a different notion of positivity will be used, which
arises from a possibly larger convex cone of positive elements (that still spans whole $\A$).
Finally, sesquilinear maps, especially inner products, will always be 
antilinear in the first and linear in the second argument.

\textbf{Acknowledgements}:
I would like to thank Stefan Waldmann and Chiara Esposito (both University Würzburg)
for some helpful discussions and suggestions.

\section{\texorpdfstring{Abstract $O^*$\=/algebras}{Abstract O*-Algebras}}
\label{sec:Defs}
\begin{definition}
  A \neu{quasi-ordered $^*$\=/algebra} is a $^*$\=/algebra $\A$ endowed with a
  quasi-order $\lesssim$ (a reflexive and transitive relation) on $\A_\Hermitian$, 
  such that the following conditions are fulfilled for
  all $a,b\in\A_\Hermitian$ with $a\lesssim b$, all 
  $c\in\A_\Hermitian$ and all $d\in\A$:
  \begin{equation*}
    a+c \lesssim b+c\,, \quad\quad
    d^*a\, d \lesssim d^* b \,d \quad\quad \text{and}\quad\quad
    0\lesssim \Unit\,.
  \end{equation*}
  If $\lesssim$ is even a partial order (i.e. additionally antisymmetric), 
  then $\A$ is called an \neu{ordered $^*$\=/algebra}
  and we might write $\le$ instead of $\lesssim$.
  Moreover, if $\A$ is a quasi-ordered $^*$\=/algebra, then the convex cone of
  \neu{positive elements} in $\A$ is written as
  $\A^+_\Hermitian := \set{a\in\A_\Hermitian}{a\gtrsim 0}$.
\end{definition}
Note that $\A^{++}_\Hermitian \subseteq \A^+_\Hermitian$ holds for every quasi-ordered $^*$\=/algebra $\A$.
One important example of ordered $^*$\=/algebras are $O^*$\=/algebras (see e.g. \cite{schmuedgen}, Definitions~2.1.6 and 2.2.8
as well as the remark under Corollary~2.1.9, but note that the notation for seminorms is different):
\begin{definition} \label{definition:Ostar}
  Let $\Hilb$ be a Hilbert space and $\Dom\subseteq \Hilb$ a dense linear subspace.
  Then write $\Adbar(\Dom)$ for the $^*$\=/algebra of all adjointable endomorphisms 
  of $\Dom$, i.e. for the set of all (necessarily linear) $a\colon \Dom \to \Dom$ for which there exists
  a (necessarily unique and linear) $a^*\colon \Dom\to \Dom$ such
  that $\skal{\phi}{a\psi} = \skal{a^*\phi}{\psi}$ holds for all $\phi,\psi\in \Dom$.
  An $O^*$\=/algebra on $\Dom$ is defined as a unital $^*$\=/subalgebra $\A$ of $\Adbar(\Dom)$,
  and an order $\le$ on $\A_\Hermitian$ is defined by 
  \begin{equation*}
    a\le b \quad:\Leftrightarrow \quad \forall_{\phi\in \Dom}: \skal{\phi}{a\,\phi} \le \skal{\phi}{b\,\phi}.
  \end{equation*}
  Moreover, for every $a\in \A^+_\Hermitian$ a positive sesquilinear form $\skal{\argument}{\argument}_a$ on
  $\Dom$ is defined as $\skal{\phi}{\psi}_a := \skal{\phi}{a\,\psi}$ for all $\phi,\psi\in \Dom$
  and the corresponding seminorm is denoted by $\seminorm{a}{\argument}$. The $O^*$\=/algebra $\A$
  is called \neu{closed} if $\Dom$ is complete with respect to the locally convex topology
  $\tau_{\A}$ defined by the seminorms $\seminorm{a}{\argument}$ for all $a\in \A^+_\Hermitian$.
\end{definition}
One can check that, with this order, an $O^*$\=/algebra is indeed an ordered $^*$\=/algebra
(the order on the Hermitian elements is a partial one due to the Cauchy Schwarz inequality).
A short remark on the notion of the adjoint in an $O^*$\=/algebra $\A\subseteq \Adbar(\Dom)$
might be due as there obviously exists a relation between the adjoint endomorphism $a^* \in \A$
and the operator theoretic adjoint $a^\dagger$ of an element $a\in \A$: 

Every $a\in \Adbar(\Dom)$ is a closable operator on $\Hilb$ with domain $\Dom$.
The completion $\Dom[a^\cl]$ of $\Dom$ under the norm $\seminorm{\Unit+a^*a}{\argument}$ 
can be identified with the linear subspace of all $\smash{\hat{\phi}} \in \Hilb$ for which there
exists a sequence $(\phi_n)_{n\in \NN}$ in $\Dom$ that converges against $\smash{\hat{\phi}}$
with respect to the norm $\seminorm{}{\argument}$ on $\Hilb$ and is a Cauchy sequence with 
respect to $\seminorm{a^*a}{\argument}$. Clearly, $a$ is continuous as a map
from $\Dom$ with $\seminorm{\Unit+a^*a}{\argument}$ to $\Hilb$ with $\seminorm{}{\argument}$, and
its closure $a^\cl \colon \Dom[a^\cl]\to \Hilb$ is the continuous extension.
The adjoint operator $a^\dagger \colon \Dom[a^\dagger] \to \Hilb$, defined on
$\Dom[a^\dagger] := \set{\phi\in\Hilb}{\Dom \ni \psi \mapsto \skal{\phi}{a\,\psi} \in \CC \text{ is $\seminorm{}{\argument}$\=/continuous}\,}$
by the requirement that $\skal{a^\dagger \phi}{\psi} = \skal{\phi}{a\,\psi}$ for 
all $\phi\in\Dom[a^\dagger]$ and all $\psi \in \Dom$,
extends the adjoint endomorphism $a^*$ and also its closure $(a^*)^\cl \colon \Dom[(a^*)^\cl] \to \Hilb$.
The natural question to ask is whether or not $a^\dagger = (a^*)^\cl$. For Hermitian $a$ this is
equivalent to $a$ being essentially self-adjoint.

Moreover, out of an $O^*$\=/algebra $\A\subseteq \Adbar(\Dom)$ that is not closed one can 
construct a closed one by noting that all $a\in \A$ are continuous with respect to $\tau_{\A}$
and thus extend to continuous adjointable endomorphisms of the completion of $\Dom$ under $\tau_{\A}$,
which can be identified with $\Dom^\cl := \bigcap_{a\in \A} \Dom[a^\cl] \subseteq \Hilb$ 
carrying the projective limit topology (see \cite{schmuedgen}, Lemma 2.2.9 and the discussion 
thereafter; the different systems of seminorms used here and in \cite{schmuedgen} are 
equivalent).

The order on an $O^*$\=/algebra is especially well-behaved in so far as it comes from a set
of algebraically positive linear functionals. Generalizing this leads to the notion of
an abstract $O^*$\=/algebra. Let $\A$ be an arbitrary quasi-ordered $^*$\=/algebra, then its
dual space $\A^*$ is naturally endowed with:
\begin{itemize}
  \item An \neu{antilinear involution} $\argument^* \colon \A^*\to\A^*$ given by
    $\omega^*(a) := \cc{\omega(a^*)}$ for all $\omega\in \A^*$ and all $a\in \A$.
  \item An \neu{$\A$-bimodule structure} $\A \times \A^* \times \A \to \A^*$ given by
    $\dupr{b\cdot\omega\cdot c}{a} := \dupr{\omega}{cab}$ for all $\omega \in \A^*$ and all $a,b,c\in\A$.
  \item An \neu{$\A$-monoid action} $\acts:\A \times \A^*\to \A^*$ given by
    $a\acts \omega := a\cdot \omega \cdot a^*$ for all $\omega \in \A^*$ and all $a\in \A$.
  \item A \neu{partial order} $\le$ on $\A^*_\Hermitian := \set{\omega\in\A^*}{\omega^* = \omega}$
    given by 
    \begin{equation*}
      \omega \le \rho \quad:\Leftrightarrow\quad \forall_{a\in \A^+_\Hermitian}:\dupr{\omega}{a}\le\dupr{\rho}{a}
    \end{equation*}
    for all $\omega,\rho \in \A^*_\Hermitian$ and fulfilling
    \begin{equation*}
      \omega+\kappa \le \rho + \kappa 
      \quad\quad \text{and}\quad\quad
      a\acts \omega \le a \acts \rho 
    \end{equation*}
    for all $\omega,\rho,\kappa \in \A^*_\Hermitian$ with $\omega\le \rho$ 
    and all $a\in \A$. Thus define $\A^{*,+}_\Hermitian := \set{\omega\in\A^*_\Hermitian}{\omega \ge 0}$.
\end{itemize}
Clearly, every $\omega \in \A^{*,+}_\Hermitian$ is an algebraically positive linear functional
because $\A^{++}_\Hermitian \subseteq \A^+_\Hermitian$.
Note that due to the existence of a unit $\Unit\in \A$ and due to the Cauchy Schwarz inequality 
\begin{equation*}
  \abs{\dupr{\omega}{a^*b}}^2 \le \dupr{\omega}{a^*a}\dupr{\omega}{b^*b}
\end{equation*}
for all $a,b\in \A$ and all algebraically positive $\omega\in \A^*_\Hermitian$,
the relation $\le$ on $\A^*_\Hermitian$ is indeed a partial order and not just a quasi-order.
Moreover, every algebraically positive $\omega\in \A^*$ actually fulfils $\omega^*=\omega$, hence $\omega\in \A^*_\Hermitian$,
because $4\dupr{\omega}{a} = \dupr{\omega}{(a+\Unit)^2} - \dupr{\omega}{(a-\Unit)^2} \in \RR$
for all $a\in\A_\Hermitian$.
\begin{definition}\label{definition:abstractOstaralgebra}
  An \neu{abstract $O^*$\=/algebra} is a tuple $(\A,\Omega)$ of a
  quasi-ordered  $^*$\=/algebra $\A$ and a linear subspace and $\A$-subbimodule
  $\Omega\subseteq\A^*$ that is stable under the antilinear involution 
  $\argument^*$, i.e. $\omega^* \in \Omega$ if $\omega\in \Omega$,
  and is compatible with the order on $\A$ in the following way:
  Define the real linear subspace $\Omega_\Hermitian := \Omega \cap \A^*_\Hermitian$
  of \neu{Hermitian linear functionals} of $(\A,\Omega)$ and the convex cone
  $\Omega^+_\Hermitian := \Omega \cap \A^{*,+}_\Hermitian$ of \neu{positive linear functionals}
  of $(\A,\Omega)$, where $\A^*_\Hermitian$ and $\A^{*,+}_\Hermitian$ are like above.
  Then it is required that $\Omega$ is the linear span of $\Omega^+_\Hermitian$
  and that 
  \begin{equation*}
    \A^+_\Hermitian
    =
    \set[\big]{
      a\in\A_\Hermitian
    }{
      \forall_{\omega\in\Omega^+_\Hermitian}:
      \dupr{\omega}{a} \ge 0
    }\,.
  \end{equation*}
\end{definition}
So every positive linear functional $\omega$ of an abstract $O^*$\=/algebra $(\A,\Omega)$
is also algebraically positive, but conversely, an algebraically positive linear functional
$\omega$ on $\A$ need not be in $\Omega^+_\Hermitian$, as neither $\omega \in \A^+_\Hermitian$
nor $\omega \in \Omega$ are guaranteed.
By the above definition, the order on $\A_\Hermitian$ determines the order on $\Omega_\Hermitian$
and vice versa. Because of this, the construction of an abstract $O^*$\=/algebra is rather
easy provided one has a distinguished set of algebraically positive linear functionals:
\begin{proposition} \label{proposition:construction}
  Let $\A$ be a $^*$\=/algebra and $P^+_\Hermitian \subseteq \A^*_\Hermitian$ a
  set of algebraically positive linear functionals that is stable under the monoid
  action $\acts$ of $\A$, i.e. $a\acts \omega \in P^+_\Hermitian$ for all $a\in\A$ 
  and all $\omega \in P^+_\Hermitian$. Define a relation $\lesssim$ on $\A_\Hermitian$ by
  \begin{equation*}
    a\lesssim b \quad:\Leftrightarrow \quad\forall_{\omega\in P^+_\Hermitian}:\dupr{\omega}{a} \le \dupr{\omega}{b}\,,
  \end{equation*}
  then $\lesssim$ is a quasi-order that turns $\A$ into a quasi-ordered $^*$\=/algebra.
  Moreover, let $\Omega$ be the linear subspace of $\A^*$ generated by $P^+_\Hermitian$,
  then $(\A,\Omega)$ is an abstract $O^*$\=/algebra and 
  $\Omega^+_\Hermitian$ is the weak-$^*$-closure in $\Omega$ of 
  $\set[\big]{\sum_{n=1}^N \omega_n}{N\in\NN_0;\, \omega_1,\dots,\omega_N\in P^+_\Hermitian}$,
  the convex cone generated by $P^+_\Hermitian$.
\end{proposition}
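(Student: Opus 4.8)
The plan is to handle the three assertions in turn, the first two being essentially bookkeeping and the third carrying the real content. For the quasi-order: first one checks that $\dupr{\omega}{a}\in\RR$ for Hermitian $a$ and $\omega\in P^+_\Hermitian\subseteq\A^*_\Hermitian$, so that the comparison is meaningful, and then reflexivity and transitivity of $\lesssim$ are inherited directly from $\le$ on $\RR$. The translation compatibility $a+c\lesssim b+c$ is immediate from linearity of the pairing, and $0\lesssim\Unit$ holds because $\dupr{\omega}{\Unit}=\dupr{\omega}{\Unit^*\Unit}\ge 0$ by algebraic positivity. For $d^*a\,d\lesssim d^*b\,d$ I would use the identity $\dupr{d\acts\omega}{a}=\dupr{\omega}{d^*a\,d}$ together with the hypothesis that $P^+_\Hermitian$ is stable under $\acts$: testing $d^*a\,d$ and $d^*b\,d$ against $\omega\in P^+_\Hermitian$ is the same as testing $a$ and $b$ against $d\acts\omega\in P^+_\Hermitian$.

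Next I would verify the axioms of Definition~\ref{definition:abstractOstaralgebra} for $(\A,\Omega)$ with $\Omega$ the complex span of $P^+_\Hermitian$. That $\Omega$ is an $\A$-subbimodule is the one non-formal point, and I would settle it by the polarization identity
\[
  b\cdot\omega\cdot c=\tfrac{1}{4}\sum_{k=0}^{3}\I^{k}\,\big((b+\I^{k}c^*)\acts\omega\big),
\]
valid for every $\omega\in P^+_\Hermitian$ and all $b,c\in\A$, whose right-hand side lies in $\Omega$ since each $(b+\I^{k}c^*)\acts\omega\in P^+_\Hermitian$; stability under $\argument^*$ then follows from its antilinearity and $\omega^*=\omega$ on the generators. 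The key observation for the remaining two axioms is that every generator is already positive, i.e. $P^+_\Hermitian\subseteq\Omega^+_\Hermitian$: unwinding the definitions gives $\A^+_\Hermitian=\set{a\in\A_\Hermitian}{\forall_{\rho\in P^+_\Hermitian}:\dupr{\rho}{a}\ge 0}$, so each $\omega\in P^+_\Hermitian$ is nonnegative on $\A^+_\Hermitian$ and hence lies in $\Omega^+_\Hermitian$. This yields $\Omega=\operatorname{span}_{\CC}\Omega^+_\Hermitian$ at once, and the characterization of $\A^+_\Hermitian$ splits into two trivial inclusions: ``$\subseteq$'' because positivity of every $\omega\in\Omega^+_\Hermitian$ means exactly $\dupr{\omega}{a}\ge 0$ for $a\in\A^+_\Hermitian$, and ``$\supseteq$'' from $P^+_\Hermitian\subseteq\Omega^+_\Hermitian$.

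For the final claim, let $C$ denote the set of finite sums (including the empty one $0$) of elements of $P^+_\Hermitian$. I would first record that $C$ is genuinely a convex cone: applying $\acts$ with $d=\sqrt{t}\,\Unit$ shows $t\omega\in P^+_\Hermitian$ for $t>0$ and $\omega\in P^+_\Hermitian$, so $C$ is stable under positive scaling as well as addition. The inclusion $\cl_{\wk}(C)\subseteq\Omega^+_\Hermitian$ is then the easy half: $\Omega^+_\Hermitian=\Omega\cap\A^{*,+}_\Hermitian$ is weak-$^*$-closed in $\Omega$, being the trace on $\Omega$ of an intersection of the closed half-spaces $\set{\eta}{\dupr{\eta}{b}\ge 0}$ (for $b\in\A^+_\Hermitian$) with the closed subspace $\A^*_\Hermitian$, and it contains the convex cone $C$.

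The reverse inclusion is where the actual work lies and I expect it to be the main obstacle. The plan is a Hahn–Banach separation inside the Hausdorff locally convex space $(\Omega,\sigma(\A^*,\A))$. Given $\omega\in\Omega^+_\Hermitian$ with $\omega\notin\cl_{\wk}(C)$, strict separation of the point from the weak-$^*$-closed convex cone produces a weak-$^*$-continuous functional, necessarily of the form $\eta\mapsto\dupr{\eta}{a}$ for some $a\in\A$; using that $C$ is a cone containing $0$ one normalizes this to $\dupr{\eta}{a}\ge 0$ for all $\eta\in C$ while $\cc{\dupr{\omega}{a}}$ pairs strictly negatively in real part. Passing to the Hermitian part $a_\Hermitian:=(a+a^*)/2$, which turns $\operatorname{Re}\dupr{\cdot}{a}$ into the real pairing $\dupr{\cdot}{a_\Hermitian}$ on Hermitian functionals, I may assume $a\in\A_\Hermitian$. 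Then $\dupr{\rho}{a}\ge 0$ for every $\rho\in P^+_\Hermitian\subseteq C$ forces $a\in\A^+_\Hermitian$ by the description of $\A^+_\Hermitian$ above, whereupon positivity of $\omega$ gives $\dupr{\omega}{a}\ge 0$, contradicting the separation. The delicate points to get right are the identification of the weak-$^*$ dual of the subspace $\Omega$ with evaluations at elements of $\A$, and the reduction to a Hermitian separating element so that the positivity hypotheses can be brought to bear.
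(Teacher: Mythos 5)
Your proposal is correct and follows essentially the same route as the paper's proof: the polarization identity for the $\A$\-/bimodule property, the observation $P^+_\Hermitian \subseteq \Omega^+_\Hermitian$ by construction, weak-$^*$-closedness of $\A^{*,+}_\Hermitian$ for the easy inclusion, and Hahn--Banach separation combined with the cone property for the hard one. The only cosmetic difference is that you separate in the complex space $\Omega$ and afterwards pass to the Hermitian part $a_\Hermitian$ of the separating element, whereas the paper separates directly in the real space $\Omega_\Hermitian$ paired with $\A_\Hermitian$; your explicit check that the cone is stable under positive scaling (via $\sqrt{t}\,\Unit \acts \omega$) is a detail the paper leaves implicit but does use.
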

\begin{proof}
  It is clear that $\A$ with $\lesssim$ is a quasi-ordered $^*$\=/algebra, and $(\A,\Omega)$
  is then an abstract $O^*$\=/algebra:
  \begin{equation*}
    a \cdot \omega \cdot b^* = \frac{1}{4} \sum\nolimits_{k=0}^3 \I^k \big((a+\I^kb)\acts \omega\big) \in \Omega
  \end{equation*}
  holds for all $a,b\in \A$ and all $\omega\in \Omega$ and proves that $\Omega$ is an
  $\A$-subbimodule. $\Omega$ is stable under $\argument^*$
  because $P^+_\Hermitian$ is, and it determines the order on $\A$ by construction.
  As $P^+_\Hermitian \subseteq \Omega^+_\Hermitian$ by construction, the linear span
  of $\Omega^+_\Hermitian$ is $\Omega$. Finally, as $\A^{*,+}_\Hermitian$ is a weak-$^*$-closed
  convex cone, $\Omega^+_\Hermitian$ is a convex cone that is weak-$^*$-closed in $\Omega$
  and thus the weak-$^*$-closure in $\Omega$ of 
  $P^{+,\textup{cn}}_\Hermitian := \set[\big]{\sum_{n=1}^N \omega_n}{N\in\NN_0;\, \omega_1,\dots,\omega_N\in P^+_\Hermitian}$
  is a subset of $\Omega^+_\Hermitian$. Conversely, if $\rho \in \Omega_\Hermitian$
  is not in the weak-$^*$-closure of
  $P^{+,\textup{cn}}_\Hermitian$,
  then by the separation theorem for the closure of the convex set $P^{+,\textup{cn}}_\Hermitian$
  from the compact point $\rho$ there exists an $a\in \A_\Hermitian$ such that 
  $\dupr{\omega}{a} \ge \dupr{\rho}{a}+1$
  for all $\omega\in P^{+,\textup{cn}}_\Hermitian$ because $P^{+,\textup{cn}}_\Hermitian$ is convex
  (see e.g. \cite{koethe}, §20.7 (2), and use that the weak-$^*$-topology on $\Omega_\Hermitian$ is
  the one induced by the dual pairing with $\A_\Hermitian$),
  and even $\dupr{\omega}{a} \ge 0 \ge \dupr{\rho}{a}+1$ because $P^{+,\textup{cn}}_\Hermitian$ is a cone,
  so $a\in \A^+_\Hermitian$ but $\dupr{\rho}{a}<0$, i.e. $\rho \notin \Omega^+_\Hermitian$.
\end{proof}
Of course, the order on an $O^*$\=/algebra was defined just like this starting with the set of positive 
linear functionals coming from inner products.
By definition, an abstract $O^*$\=/algebra with a non-trivial order has non-trivial positive linear
functionals, hence non-trivial representations as $O^*$\=/algebras, e.g. the well-known GNS
representations (see \cite{schmuedgen}, Theorems 8.6.2 and 8.6.4):
\begin{definition} \label{definition:gns}
  Let $\A$ be a $^*$\=/algebra and $\omega$ an algebraically positive linear functional on $\A$, then
  the \neu{Gel'fand ideal} associated to $\omega$ 
  is defined as $\Gelfand_\omega := \set{a\in\A}{\dupr{\omega}{a^*a} = 0}$,
  and the Hilbert space $\Hilb_\omega$ as the completion of $\A / \Gelfand_\omega$ with
  inner product $\skal{[a]_\omega}{[b]_\omega}_\omega := \dupr{\omega}{a^*b}$ for all 
  $[a]_\omega,[b]_\omega\in \A / \Gelfand_\omega$
  having representatives $a,b\in \A$. We can naturally identify $\A/\Gelfand_\omega$ with a
  dense linear subspace $\Dom[\omega]$ of $\Hilb_\omega$ and construct a linear map
  $[\argument]_\omega\colon \A \to \Dom[\omega]$ that assigns to every $a\in \A$
  its equivalence class $[a]_\omega \in \Dom[\omega]$ under this identification,
  as well as a unital $^*$\=/homomorphism 
  $\pi_\omega \colon \A \to \Adbar(\Dom[\omega])$ by $\pi_\omega(a)[b]_\omega := [ab]_\omega$,
  that maps the $^*$\=/algebra $\A$ onto an $O^*$\=/algebra $\pi_\omega(\A) \subseteq \Adbar(\Dom[\omega])$
  and describes the \neu{GNS representation of $\A$ associated to $\omega$}.
  
  Moreover, let $\Dom[\omega]^\cl := \bigcap_{a\in \A} \Dom[\omega,\pi_\omega(a)^\cl] \subseteq \Hilb_\omega$,
  which can be identified with the completion of $\Dom[\omega]$ under $\tau_{\pi_\omega(\A)}$,
  and define $\pi_\omega^\cl(a) \colon \Dom[\omega]^\cl \to \Dom[\omega]^\cl$
  as the continuous extension of $\pi_\omega(a)$ for every $a\in \A$. Then
  $\pi_\omega^\cl\colon \A \to \Adbar(\Dom[\omega]^\cl)$ describes the 
  \neu{closed GNS representation of $\A$ associated to $\omega$} and is
  a unital $^*$\=/homomorphism of $\A$ onto a closed $O^*$\=/algebra $\pi_\omega^\cl(\A) \subseteq \Adbar(\Dom[\omega]^\cl)$.
\end{definition}
\begin{lemma} \label{lemma:gnspositive}
  Let $\A$ be a quasi-ordered $^*$\=/algebra and $\omega\in\A^{*,+}_\Hermitian$, then the two
  GNS representations $\pi_\omega\colon \A \to \Adbar(\Dom[\omega])$ and 
  $\pi_\omega^\cl\colon \A \to \Adbar(\Dom[\omega]^\cl)$ are positive, i.e.
  $\pi_\omega(a) \ge 0$ and $\pi_\omega^\cl(a) \ge 0$ for all $a\in \A^+_\Hermitian$.
\end{lemma}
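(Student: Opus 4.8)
The plan is to unwind the definitions until the claim collapses onto the single order axiom $d^* a\, d \lesssim d^* b\, d$ of a quasi-ordered $^*$\=/algebra, and then to upgrade the resulting statement to the closed representation by a continuity argument. First I would record the relevant data of the GNS construction: $\Dom[\omega]$ consists of the classes $[b]_\omega$ with $b\in\A$, the action is $\pi_\omega(a)[b]_\omega = [ab]_\omega$, and the inner product is $\skal{[b]_\omega}{[c]_\omega}_\omega = \dupr{\omega}{b^*c}$. Since $a\in\A^+_\Hermitian\subseteq\A_\Hermitian$ and $\pi_\omega$ is a unital $^*$\=/homomorphism, $\pi_\omega(a)$ is Hermitian in $\pi_\omega(\A)$, so $\pi_\omega(a)\ge 0$ is meaningful and, by the order of Definition~\ref{definition:Ostar}, just means $\skal{\phi}{\pi_\omega(a)\phi}_\omega\ge 0$ for all $\phi\in\Dom[\omega]$. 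Writing $\phi=[b]_\omega$, the diagonal form is
\begin{equation*}
  \skal{[b]_\omega}{\pi_\omega(a)[b]_\omega}_\omega
  = \skal{[b]_\omega}{[ab]_\omega}_\omega
  = \dupr{\omega}{b^* a\, b}\,,
\end{equation*}
so the entire question reduces to proving $\dupr{\omega}{b^* a\, b}\ge 0$.

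The decisive step is short and purely order-theoretic. By definition $a\in\A^+_\Hermitian$ means $0\lesssim a$; applying the axiom $d^*(\argument)\,d$ with $d=b$ to this relation gives $0 = b^*\,0\,b \lesssim b^* a\, b$, that is, $b^* a\, b\in\A^+_\Hermitian$. Since $\omega\in\A^{*,+}_\Hermitian$, the definition of the order on $\A^*_\Hermitian$ yields $\dupr{\omega}{c}\ge 0$ for every $c\in\A^+_\Hermitian$, hence $\dupr{\omega}{b^* a\, b}\ge 0$. As $b$ was arbitrary, $\pi_\omega(a)\ge 0$.

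The remaining task, which I expect to be the more technical (but still routine) part, is to extend this to $\pi_\omega^\cl(a)$ by density and continuity. Fixing $\hat\phi\in\Dom[\omega]^\cl\subseteq\Dom[\omega,\pi_\omega(a)^\cl]$, I would use the description of $\Dom[\omega,\pi_\omega(a)^\cl]$ from the discussion after Definition~\ref{definition:Ostar} to choose a sequence $([b_n]_\omega)_{n\in\NN}$ in $\Dom[\omega]$ that converges to $\hat\phi$ in the Hilbert norm and is Cauchy with respect to $\seminorm{\pi_\omega(a)^*\pi_\omega(a)}{\argument}$. Because $\seminorm{\pi_\omega(a)^*\pi_\omega(a)}{\psi} = \norm{\pi_\omega(a)\psi}$ is exactly the Hilbert-space norm of $\pi_\omega(a)\psi$, the sequence $\pi_\omega(a)[b_n]_\omega$ is Cauchy in $\Hilb_\omega$; its limit is $\pi_\omega^\cl(a)\hat\phi$ by the very construction of $\pi_\omega^\cl(a)$ as the continuous extension of $\pi_\omega(a)$. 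Continuity of the inner product in both arguments then gives
\begin{equation*}
  \skal{\hat\phi}{\pi_\omega^\cl(a)\hat\phi}_\omega
  = \lim_{n\to\infty}\skal{[b_n]_\omega}{\pi_\omega(a)[b_n]_\omega}_\omega
  \ge 0\,,
\end{equation*}
each term on the right being non-negative by the algebraic case, so $\pi_\omega^\cl(a)\ge 0$ as well. The conceptual weight of the lemma therefore rests entirely on the order axiom used in the middle step; the surrounding arguments are bookkeeping with the GNS data and a standard closure argument, and I anticipate no genuine obstacle beyond keeping the two topologies, the Hilbert norm and the graph seminorm, straight in the final passage to the limit.
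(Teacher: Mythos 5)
Your proof is correct and follows essentially the same route as the paper: the core step is identical (the order axiom $d^*a\,d\lesssim d^*b\,d$ applied to $0\lesssim a$ gives $b^*a\,b\in\A^+_\Hermitian$, hence $\dupr{\omega}{b^*a\,b}\ge 0$), and your extension to $\pi_\omega^\cl$ is the same density-plus-continuity argument, merely spelled out with graph-norm sequences instead of the paper's appeal to $\tau_{\pi_\omega^\cl(\A)}$-density of $\Dom[\omega]$ and $\tau_{\pi_\omega^\cl(\A)}$-continuity of $\pi_\omega^\cl(a)$.
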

\begin{proof}
  As $\skal{[b]_\omega}{\pi_\omega(a)\,[b]_\omega}_\omega = \dupr{\omega}{b^*a\,b} \ge 0$
  for all $[b]_\omega\in\Dom[\omega]$ and all $a\in \A^+_\Hermitian$,
  this is true for the ordinary GNS representation $\pi_\omega$.
  Using that
  $\Dom[\omega]$ is $\tau_{\pi_\omega^\cl(\A)}$-dense in $\Dom[\omega]^\cl$ and
  $\pi_\omega^\cl(a)$ is $\tau_{\pi_\omega^\cl(\A)}$\=/continuous for all $a\in \A^+_\Hermitian$,
  this inequality extends to the closed GNS representation 
  $\pi^\cl_\omega$.
\end{proof}
The definitions of states, pure states and characters of an abstract $O^*$\=/algebra
are:
\begin{definition}
  Let $(\A,\Omega)$ be an abstract $O^*$\=/algebra. Then 
  \begin{align*}
    \States(\A,\Omega) &:= \set[\big]{\omega \in \Omega^+_\Hermitian}{\dupr{\omega}{\Unit}= 1} \\
    \quad\quad\text{and}\quad\quad
    \PureStates(\A,\Omega) &:= \set[\big]{\omega \in \States(\A,\Omega)}{\omega\text{ is an extreme point of }\States(\A,\Omega)}
  \end{align*}
  are the sets of \neu{states} and \neu{pure states} of $(\A,\Omega)$, respectively, and
  \begin{equation*}
    \Characters(\A,\Omega) 
    := 
    \set[\big]{
      \omega \in \States(\A,\Omega)
    }{
      \omega\text{ is multiplicative, i.e. }\dupr{\omega}{ab}=\dupr{\omega}{a}\dupr{\omega}{b}\text{ for all }a,b\in\A
    }
  \end{equation*}
  is the set of \neu{characters} of $(\A,\Omega)$.
\end{definition}
So a character of an abstract $O^*$-algebra $(\A,\Omega)$ is also required to be
positive with respect to the ordering on the quasi-ordered $^*$-algebra $\A$,
and not just algebraically positive (which would be an immediate consequence of its
multiplicativity).

If $\A$ is an arbitrary $^*$\=/algebra, one can choose $P^+_\Hermitian$ 
as the set of all algebraically positive linear functionals on $\A$ and construct an
abstract $O^*$\=/algebra $(\A,\Omega)$ like in Proposition~\ref{proposition:construction}.
Similarly, if $\A$ is a $^*$\=/algebra carrying a topology that makes 
the operators of left- and right-multiplication on $\A$ with fixed elements of $\A$
continuous, then one can choose $P^+_\Hermitian$ as the set of all continuous algebraically 
positive linear functionals on $\A$ and again construct an abstract $O^*$\=/algebra $(\A,\Omega)$ as before.
Finding suitable sufficient conditions for a commutative abstract $O^*$\=/algebra $(\A,\Omega)$, such that 
\begin{equation*}
  \PureStates(\A,\Omega) = \Characters(\A,\Omega)
\end{equation*}
holds, then answers the questions raised in the introduction with respect to pure states and characters.
\section{Algebraic properties} \label{sec:algebra}
The following definition will be extremely useful:
\begin{definition}
  Let $\A$ be a $^*$\=/algebra, $\omega$ an algebraic state on $\A$
  and $a\in \A$. Then define the \neu{variance of $\omega$ on $a$}:
  \begin{equation*}
    \Var_\omega(a) 
    := 
    \dupr[\big]{\omega}{\big(a-\dupr{\omega}{a}\Unit\big)^*\big(a-\dupr{\omega}{a}\Unit\big)}
    =
    \dupr{\omega}{a^*a} - \abs{\dupr{\omega}{a}}^2\,.
  \end{equation*}
\end{definition}
Note that $\Var_\omega(a) \ge 0$ and
\begin{equation*}
  \abs[\big]{\dupr{\omega}{b^*a}-\cc{\dupr{\omega}{b}}\dupr{\omega}{a}}^2
  =
  \abs[\big]{\dupr[\big]{\omega}{\big(b-\dupr{\omega}{b}\Unit\big)^*\big(a-\dupr{\omega}{a}\Unit\big)}}^2
  \le
  \Var_\omega(b) \Var_\omega(a)
\end{equation*}
holds for all algebraic states $\omega$ on $\A$ and all elements $a,b\in \A$
due to the Cauchy Schwarz inequality. This proves:
\begin{lemma} \label{lemma:var}
  Let $\A$ be a $^*$\=/algebra, $\omega$ an algebraic state on $\A$
  and $a\in \A$ with $\Var_\omega(a) = 0$, then
  \begin{equation*}
    \dupr{\omega}{ba} = \dupr{\omega}{b} \dupr{\omega}{a}
  \end{equation*}
  holds for all $b\in \A$. Thus $\omega$ is multiplicative if and only if
  $\Var_\omega(a) = 0$ for all $a\in \A$.
\end{lemma}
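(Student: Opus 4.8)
The plan is to read off both assertions directly from the Cauchy Schwarz estimate
\begin{equation*}
  \abs[\big]{\dupr{\omega}{b^*a}-\cc{\dupr{\omega}{b}}\dupr{\omega}{a}}^2 \le \Var_\omega(b)\,\Var_\omega(a)
\end{equation*}
that was just recorded before the statement; essentially no further computation is required, so the whole argument is a matter of substituting correctly into this one inequality.

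For the displayed identity I would apply the estimate not to $b$ itself but with $b^*$ in place of $b$. Since every algebraically positive functional is Hermitian (as noted earlier, an algebraically positive $\omega$ automatically satisfies $\omega^* = \omega$), we have $\dupr{\omega}{b^*} = \cc{\dupr{\omega}{b}}$, and hence $\cc{\dupr{\omega}{b^*}} = \dupr{\omega}{b}$, while $(b^*)^* = b$. After this substitution the left-hand side becomes $\abs{\dupr{\omega}{ba} - \dupr{\omega}{b}\dupr{\omega}{a}}^2$ and the right-hand side becomes $\Var_\omega(b^*)\,\Var_\omega(a)$. The hypothesis $\Var_\omega(a) = 0$ makes the right-hand side vanish, which forces $\dupr{\omega}{ba} = \dupr{\omega}{b}\dupr{\omega}{a}$ for every $b\in\A$, as claimed.

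For the concluding equivalence, the ``only if'' direction is immediate: if $\omega$ is multiplicative, then $\dupr{\omega}{a^*a} = \cc{\dupr{\omega}{a}}\dupr{\omega}{a} = \abs{\dupr{\omega}{a}}^2$, which is precisely the statement $\Var_\omega(a) = 0$. Conversely, if $\Var_\omega(a) = 0$ holds for every $a\in\A$, then applying the first part to all $a$ and all $b$ gives $\dupr{\omega}{ba} = \dupr{\omega}{b}\dupr{\omega}{a}$ throughout, which is exactly multiplicativity.

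I do not expect any genuine obstacle here, as the nonnegativity of the variance and the Cauchy Schwarz inequality already do all the work. The only point that requires a little care is the bookkeeping in the substitution $b \mapsto b^*$ together with the use of $\omega^* = \omega$ to identify $\cc{\dupr{\omega}{b^*}}$ with $\dupr{\omega}{b}$; both of these facts were established in the paragraphs preceding the statement, so they may be invoked without comment.
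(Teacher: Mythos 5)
Your proposal is correct and follows exactly the paper's own (implicit) argument: the paper derives the lemma directly from the displayed Cauchy\--Schwarz estimate preceding it, with the same substitution $b\mapsto b^*$ and the same use of $\omega^*=\omega$ to identify $\cc{\dupr{\omega}{b^*}}$ with $\dupr{\omega}{b}$. Nothing is missing; the bookkeeping you flag is precisely the only step involved.
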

The first consequence is the following essentially well-known result:
\begin{proposition} \label{proposition:charactersAreExtreme}
  On an abstract $O^*$\=/algebra $(\A,\Omega)$, every character is a pure state, i.e.
  \begin{equation*}
    \Characters(\A,\Omega) \subseteq \PureStates(\A,\Omega)\,.
  \end{equation*}
\end{proposition}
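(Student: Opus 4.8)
The plan is to show that any character $\omega \in \Characters(\A,\Omega)$ is an extreme point of $\States(\A,\Omega)$. Suppose $\omega = \lambda \rho_1 + (1-\lambda)\rho_2$ with $\lambda \in\,\,]0,1[\,$ and $\rho_1,\rho_2 \in \States(\A,\Omega)$; I must deduce $\omega = \rho_1 = \rho_2$. The key observation is that multiplicativity of $\omega$ means $\Var_\omega(a)=0$ for all $a\in\A$ by Lemma~\ref{lemma:var}, and I want to transfer this vanishing to each $\rho_i$. The natural tool is the elementary fact that for a convex combination $\omega=\lambda\rho_1+(1-\lambda)\rho_2$ of states, one has a pointwise bound $\dupr{\rho_i}{a^*a} \le \lambda^{-1}\dupr{\omega}{a^*a}$ (and similarly with $(1-\lambda)^{-1}$), since $\dupr{\rho_j}{a^*a}\ge 0$ for the other index by algebraic positivity.

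First I would fix $a\in\A$ and set $b := a - \dupr{\omega}{a}\Unit$, so that $\Var_\omega(a) = \dupr{\omega}{b^*b} = 0$. Since $\dupr{\omega}{b^*b} = \lambda\dupr{\rho_1}{b^*b} + (1-\lambda)\dupr{\rho_2}{b^*b}$ is a sum of two non-negative terms (both $\rho_i$ being states, hence algebraically positive) with strictly positive coefficients, I conclude $\dupr{\rho_1}{b^*b} = \dupr{\rho_2}{b^*b} = 0$. Next I would unpack what $\dupr{\rho_i}{b^*b}=0$ gives: writing it out yields $\dupr{\rho_i}{a^*a} - \cc{\dupr{\omega}{a}}\dupr{\rho_i}{a} - \dupr{\omega}{a}\dupr{\rho_i}{a^*} + \abs{\dupr{\omega}{a}}^2 = 0$, using $\dupr{\rho_i}{\Unit}=1$. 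This is not immediately $\Var_{\rho_i}(a)=0$, so I need an additional step to pin down $\dupr{\rho_i}{a} = \dupr{\omega}{a}$.

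The cleanest route is to apply the Cauchy--Schwarz inequality for $\rho_i$ (available since $\rho_i$ is algebraically positive, as noted in the excerpt) to the pairing of $b$ with $\Unit$: namely
\begin{equation*}
  \abs[\big]{\dupr{\rho_i}{b}}^2 = \abs[\big]{\dupr{\rho_i}{\Unit^* b}}^2 \le \dupr{\rho_i}{\Unit}\,\dupr{\rho_i}{b^*b} = 1\cdot 0 = 0\,,
\end{equation*}
so $\dupr{\rho_i}{b} = 0$, which unwinds to $\dupr{\rho_i}{a} = \dupr{\omega}{a}\dupr{\rho_i}{\Unit} = \dupr{\omega}{a}$. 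Since $a\in\A$ was arbitrary, $\rho_1 = \rho_2 = \omega$ as linear functionals, establishing extremality and hence $\Characters(\A,\Omega)\subseteq\PureStates(\A,\Omega)$.

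I do not expect a serious obstacle here, as the argument is essentially the standard GNS-free computation; the only point requiring mild care is to confirm that each $\rho_i$, being a state of the abstract $O^*$\=/algebra, is genuinely algebraically positive so that Cauchy--Schwarz applies — but this is exactly the remark following Definition~\ref{definition:abstractOstaralgebra} that every positive linear functional of $(\A,\Omega)$ is algebraically positive. One should also verify at the outset that $\Characters(\A,\Omega)\subseteq\States(\A,\Omega)$, i.e. that a character is indeed a state; this is built into the definition of $\Characters(\A,\Omega)$, so nothing extra is needed.
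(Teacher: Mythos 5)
Your proof is correct, and it reaches the conclusion by a genuinely different computation than the paper, although both arguments share the same two ingredients: Lemma~\ref{lemma:var} (a character has vanishing variance) and algebraic positivity of the components $\rho_1,\rho_2$. The paper verifies the identity
\begin{equation*}
  \Var_{\lambda\rho_1+(1-\lambda)\rho_2}(a)
  =
  \lambda\Var_{\rho_1}(a) + (1-\lambda)\Var_{\rho_2}(a) + \lambda(1-\lambda)\abs[\big]{\dupr{\rho_1-\rho_2}{a}}^2
\end{equation*}
and concludes from $\Var_\omega(a)=0$ and $\Var_{\rho_1}(a),\Var_{\rho_2}(a)\ge 0$ that $\dupr{\rho_1-\rho_2}{a}=0$ for all $a\in\A$, i.e. first $\rho_1=\rho_2$ and then $\omega=\rho_1=\rho_2$. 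You instead center at the character's value, $b := a-\dupr{\omega}{a}\Unit$, split $0=\dupr{\omega}{b^*b}=\lambda\dupr{\rho_1}{b^*b}+(1-\lambda)\dupr{\rho_2}{b^*b}$ into non-negative summands to get $\dupr{\rho_i}{b^*b}=0$, and then apply the Cauchy Schwarz inequality against $\Unit$ to obtain $\dupr{\rho_i}{a}=\dupr{\omega}{a}$, identifying each component with $\omega$ directly. What the paper's route buys is a reusable quantitative statement about how variance behaves under convex combinations, from which extremality is visible at a glance; what your route buys is that it avoids verifying that identity, relying only on positivity splitting and the Cauchy Schwarz inequality already recorded for algebraically positive functionals. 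Your side remarks are also accurate: the $\rho_i$ are algebraically positive because states lie in $\Omega^+_\Hermitian\subseteq\A^{*,+}_\Hermitian$ and $\A^{++}_\Hermitian\subseteq\A^+_\Hermitian$, and $\Characters(\A,\Omega)\subseteq\States(\A,\Omega)$ holds by definition, so no step is missing.
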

\begin{proof}
  For all $\rho_1,\rho_2\in \States(\A,\Omega)$, all $\lambda\in [0,1]$ and all $a\in \A$,
  one can check that the identity
  \begin{equation*}
    \Var_{\lambda\rho_1 + (1-\lambda)\rho_2}(a)
    =
    \lambda \Var_{\rho_1}(a) + (1-\lambda) \Var_{\rho_2}(a) + \lambda(1-\lambda)\abs{\dupr{\rho_1-\rho_2}{a}}^2
  \end{equation*}
  holds. So if $\omega\in\States(\A,\Omega)$ is even a character of $(\A,\Omega)$
  and $\rho_1,\rho_2\in\States(\A,\Omega)$ fulfil
  $\omega = \lambda \rho_1 + (1-\lambda)\rho_2$ with  $\lambda\in\,\,]0,1[\,$, then $\Var_\omega(a)=0$ and 
  $\Var_{\rho_1}(a), \Var_{\rho_2}(a) \ge 0$ for all $a\in \A$
  imply that $\abs{\dupr{\rho_1-\rho_2}{a}}^2 = 0$ for all $a\in \A$, hence $\omega = \rho_1 = \rho_2$.
  We conclude that $\omega$ is an extreme point of $\States(\A,\Omega)$.
\end{proof}
The previous observation from Lemma~\ref{lemma:var}, that vanishing variance of a state $\omega$ implies that $\omega$ is
multiplicative, can even be strengthened:
\begin{lemma} \label{lemma:varquadrat}
  If $\A$ is a $^*$\=/algebra and $\omega$ an algebraic state on $\A$, then
  $\omega$ is multiplicative if and only if $\Var_\omega(a^2) = 0$
  holds for all $a\in\A_\Hermitian$.
\end{lemma}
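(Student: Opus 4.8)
The forward direction is immediate and requires no new work: if $\omega$ is multiplicative, then by Lemma~\ref{lemma:var} we have $\Var_\omega(b)=0$ for every $b\in\A$, in particular for $b=a^2$ with $a\in\A_\Hermitian$. So the entire content of the lemma lies in the converse, and the plan is to deduce full multiplicativity from the weaker-looking hypothesis $\Var_\omega(a^2)=0$ (with $a$ only ranging over Hermitian elements). By Lemma~\ref{lemma:var} it suffices to produce $\Var_\omega(a)=0$, first for all Hermitian $a$ and then to bootstrap to all of $\A$.

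The key device is to exploit the freedom of shifting a Hermitian element by real multiples of $\Unit$: for $a\in\A_\Hermitian$ and $t\in\RR$ the element $a+t\Unit$ is again Hermitian, so the hypothesis gives $\Var_\omega\big((a+t\Unit)^2\big)=0$ for every $t$. Writing $m_k:=\dupr{\omega}{a^k}$ (these are real, since $a^k$ is Hermitian and $\omega$ is Hermitian), I would expand the left-hand side; a direct computation shows that the $t^3$ and $t^4$ contributions cancel and one is left with
\begin{equation*}
  \Var_\omega\big((a+t\Unit)^2\big)
  = (m_4-m_2^2) + 4(m_3-m_1m_2)\,t + 4(m_2-m_1^2)\,t^2\,.
\end{equation*}
Since this vanishes for all real $t$, it is the zero polynomial, and in particular its coefficient of $t^2$ vanishes. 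That coefficient is $4(m_2-m_1^2)=4\Var_\omega(a)$, so $\Var_\omega(a)=0$ for every Hermitian $a$.

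With $\Var_\omega(a)=0$ established for all $a\in\A_\Hermitian$, Lemma~\ref{lemma:var} yields $\dupr{\omega}{ba}=\dupr{\omega}{b}\dupr{\omega}{a}$ for all $b\in\A$ and all Hermitian $a$. To reach multiplicativity I would then decompose an arbitrary $c\in\A$ as $c=x+\I y$ with $x,y\in\A_\Hermitian$ and use linearity of the pairing in its second slot: $\dupr{\omega}{bc}=\dupr{\omega}{bx}+\I\dupr{\omega}{by}=\dupr{\omega}{b}\big(\dupr{\omega}{x}+\I\dupr{\omega}{y}\big)=\dupr{\omega}{b}\dupr{\omega}{c}$ for all $b,c\in\A$. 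The one genuine idea — and hence the only real obstacle — is recognizing the substitution $a\mapsto a+t\Unit$ and reading off $\Var_\omega(a)$ as the leading coefficient of the polynomial $\Var_\omega\big((a+t\Unit)^2\big)$; everything afterwards is the routine expansion (whose point is the cancellation of the top two terms) together with a standard real/imaginary-part argument.
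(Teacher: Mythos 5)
Your proof is correct, but it takes a genuinely different route through the converse than the paper does. Both arguments test the hypothesis on unit shifts of $a$, but the paper uses only the two shifts $(a\pm\Unit)^2$ and extracts $\Var_\omega(a)=0$ by invoking Lemma~\ref{lemma:var} a second time: since $\Var_\omega\big((a\pm\Unit)^2\big)=0$, that lemma (i.e.\ Cauchy Schwarz) gives $\dupr{\omega}{a(a\pm\Unit)^2}=\dupr{\omega}{a}\dupr{\omega}{(a\pm\Unit)^2}$, and subtracting these two identities via $a(a+\Unit)^2-a(a-\Unit)^2=4a^2$ yields $\dupr{\omega}{a^2}=\dupr{\omega}{a}^2$. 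You never use Lemma~\ref{lemma:var} inside this step: your deduction is pure moment algebra, expanding $\Var_\omega\big((a+t\Unit)^2\big)$ as a quadratic polynomial in $t$ and reading off the $t^2$ coefficient — and your computation checks out: the $t^3$ and $t^4$ terms do cancel, and the $m_k$ are real because an algebraically positive functional is Hermitian, so $m_2-m_1^2$ really is $\Var_\omega(a)$. Note also that your argument needs the hypothesis only at three values of $t$, say $t\in\{0,\pm 1\}$, since a quadratic vanishing at three points is zero, so the two proofs consume exactly the same data. What the paper's version buys is brevity: it reuses the already-available Lemma~\ref{lemma:var} together with the polarization-type identity, so no fourth moments ever have to be written out. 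What yours buys is that the middle step becomes completely elementary — no Cauchy Schwarz, just the vanishing of a quadratic polynomial — which makes transparent how the conclusion sits inside the moment data. Both proofs then finish identically, applying Lemma~\ref{lemma:var} and decomposing an arbitrary element into Hermitian parts.
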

\begin{proof}
  This condition is clearly necessary, but also sufficient: 
  If $\Var_\omega(a^2) = 0$ for all $a\in\A_\Hermitian$, then also
  $\Var_\omega((a\pm\Unit)^2) = 0$ for all $a\in\A_\Hermitian$, thus
  \begin{equation*}
    4\dupr{\omega}{a^2} = 
    \dupr{\omega}{a(a+\Unit)^2} - \dupr{\omega}{a(a-\Unit)^2}
    = \dupr{\omega}{a} \dupr{\omega}{(a+\Unit)^2} - \dupr{\omega}{a}\dupr{\omega}{(a-\Unit)^2}
    = 4\dupr{\omega}{a}^2
  \end{equation*}
  due to Lemma~\ref{lemma:var}, which proves $\Var_\omega(a) = 0$ for all $a\in\A_\Hermitian$.
  As every element of $\A$ can be expressed as a linear combination of Hermitian elements,
  it follows from Lemma~\ref{lemma:var} that $\omega$ is multiplicative.
\end{proof}
The essential property of pure states that we will have to exploit is the following:
\begin{lemma} \label{lemma:extremal}
  If $(\A,\Omega)$ is an abstract $O^*$\=/algebra, $\omega\in \PureStates(\A,\Omega)$
  and $\rho \in \Omega^+_\Hermitian$ such that
  $\rho \le \omega$, then $\rho = \dupr{\rho}{\Unit}\,\omega$.
\end{lemma}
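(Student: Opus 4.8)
The plan is to exhibit $\omega$ as a genuine convex combination of two states, one proportional to $\rho$ and the other proportional to $\omega-\rho$, and then to invoke the defining extreme-point property of a pure state. First I would set $t := \dupr{\rho}{\Unit}$. Since the axiom $0\lesssim\Unit$ forces $\Unit\in\A^+_\Hermitian$, positivity of $\rho$ gives $t\ge 0$, while the hypothesis $\rho\le\omega$ gives $t = \dupr{\rho}{\Unit}\le\dupr{\omega}{\Unit}=1$; hence $t\in[0,1]$. The whole argument then splits according to whether $t$ lies in the interior $]0,1[$ or at the boundary.

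The crucial preliminary step is to verify that $\omega-\rho\in\Omega^+_\Hermitian$, so that (after normalization) it is an admissible element of $\States(\A,\Omega)$. By the definition of the order on $\A^*_\Hermitian$, the relation $\rho\le\omega$ means precisely that $\dupr{\omega-\rho}{a}\ge 0$ for all $a\in\A^+_\Hermitian$, i.e. $\omega-\rho\in\A^{*,+}_\Hermitian$. Since $\omega,\rho\in\Omega$ and $\Omega$ is a linear subspace of $\A^*$, we also have $\omega-\rho\in\Omega$, and therefore $\omega-\rho\in\Omega\cap\A^{*,+}_\Hermitian = \Omega^+_\Hermitian$. Likewise $\rho\in\Omega^+_\Hermitian$ by assumption, and both are convex cones stable under multiplication by non-negative reals.

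In the main case $0<t<1$ I would then write
\begin{equation*}
  \omega = t\,\Big(\tfrac{1}{t}\,\rho\Big) + (1-t)\,\Big(\tfrac{1}{1-t}\,(\omega-\rho)\Big)\,,
\end{equation*}
and observe that $\tfrac{1}{t}\rho$ and $\tfrac{1}{1-t}(\omega-\rho)$ both lie in $\Omega^+_\Hermitian$ and are normalized to $1$ on $\Unit$, hence belong to $\States(\A,\Omega)$. As $\omega$ is an extreme point of $\States(\A,\Omega)$ and $t\in\,]0,1[\,$, this forces $\omega = \tfrac{1}{t}\rho$, that is $\rho = t\,\omega = \dupr{\rho}{\Unit}\,\omega$, which is the claim.

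It remains to dispose of the two boundary values $t\in\{0,1\}$, which I expect to be the only place needing a separate argument. If $t=0$, then $\rho$ is algebraically positive with $\dupr{\rho}{\Unit}=0$, and the Cauchy Schwarz inequality $\abs{\dupr{\rho}{a}}^2 = \abs{\dupr{\rho}{\Unit^*a}}^2 \le \dupr{\rho}{\Unit}\dupr{\rho}{a^*a} = 0$ shows $\rho = 0 = \dupr{\rho}{\Unit}\,\omega$. If $t=1$, the same inequality applied to the positive functional $\omega-\rho$, which satisfies $\dupr{\omega-\rho}{\Unit}=0$, yields $\omega-\rho=0$, i.e. $\rho=\omega=\dupr{\rho}{\Unit}\,\omega$. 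The main obstacle here is conceptual rather than computational: one must ensure that $\rho$ and $\omega-\rho$ are not merely algebraically positive but genuinely lie in the prescribed cone $\Omega^+_\Hermitian$, since only then do their normalizations qualify as states in $\States(\A,\Omega)$ to which extremality applies. This is exactly where the linear-subspace structure of $\Omega$ and the order-compatibility built into the definition of an abstract $O^*$-algebra are used.
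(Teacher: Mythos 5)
Your proof is correct and follows essentially the same route as the paper's: the case distinction on $\dupr{\rho}{\Unit}\in\{0\}\cup\,]0,1[\,\cup\{1\}$, Cauchy--Schwarz to dispose of the boundary cases, and the convex decomposition $\omega = t\,(\rho/t)+(1-t)\,\big((\omega-\rho)/(1-t)\big)$ together with extremality in the main case. Your explicit verification that $\omega-\rho\in\Omega^+_\Hermitian$ (via the linear-subspace structure of $\Omega$) is a detail the paper leaves implicit, but the argument is identical in substance.
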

\begin{proof}
  If $\dupr{\rho}{\Unit} = 0$, then $\rho = 0$ due to the Cauchy Schwarz inequality, and
  $\rho = \dupr{\rho}{\Unit}\,\omega$ is trivial. 
  If $\dupr{\rho}{\Unit} = 1$, then
  $\dupr{\omega-\rho}{\Unit} = 0$ together with $\omega-\rho\in \Omega^+_\Hermitian$ 
  show that $\omega = \rho$ and again $\rho = \dupr{\rho}{\Unit}\,\omega$ is trivial.
  Otherwise, let $\lambda := \dupr{\rho}{\Unit} \in\,\,]0,1[\,$, then
  $\omega = \lambda \big(\rho / \lambda\big) + (1-\lambda) \big((\omega-\rho)/(1-\lambda)\big)$ with
  states $\rho / \lambda$ and $(\omega-\rho)/(1-\lambda)$ implies $\omega = \rho / \lambda$.
\end{proof}
\begin{proposition} \label{proposition:extremal}
  Let $(\A,\Omega)$ be an abstract $O^*$\=/algebra, $\omega\in \PureStates(\A,\Omega)$
  and $\algebra{B}\subseteq \A$ a unital $^*$\=/subalgebra such that for every $a\in \algebra{B}_\Hermitian$
  there exists a $C_a\in[0,\infty[\,$ for which $a\acts \omega \le C_a\,\omega$ holds.
  Then $\omega$ is multiplicative on $\algebra{B}$.
\end{proposition}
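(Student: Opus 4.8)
The plan is to reduce multiplicativity of $\omega$ on $\algebra{B}$ to the vanishing of $\Var_\omega(a)$ for every $a\in\algebra{B}_\Hermitian$, and to obtain the latter by combining the boundedness hypothesis with the extremality recorded in Lemma~\ref{lemma:extremal}. First I would fix $a\in\algebra{B}_\Hermitian$ and observe that $a\acts\omega\in\Omega^+_\Hermitian$: it lies in $\Omega$ since $\Omega$ is an $\A$-subbimodule, and it is positive because $\dupr{a\acts\omega}{y}=\dupr{\omega}{a^*ya}=\dupr{\omega}{aya}\ge0$ for every $y\in\A^+_\Hermitian$, using that $0\lesssim a^*ya$ follows from the quasi-order axiom applied to $0\lesssim y$ with $d=a$, together with $\omega\in\A^{*,+}_\Hermitian$.

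Next I would exploit the hypothesis $a\acts\omega\le C_a\,\omega$. If $C_a>0$, then $\tfrac{1}{C_a}(a\acts\omega)$ is a positive functional in $\Omega^+_\Hermitian$ that is bounded above by $\omega$, so Lemma~\ref{lemma:extremal} (applied to the pure state $\omega$) forces it to be a scalar multiple of $\omega$; unwinding the scalar $\dupr{a\acts\omega}{\Unit}=\dupr{\omega}{a^*a}=\dupr{\omega}{a^2}$ yields $a\acts\omega=\dupr{\omega}{a^2}\,\omega$. The degenerate case $C_a=0$ gives $0\le a\acts\omega\le0$, hence $a\acts\omega=0=\dupr{\omega}{a^2}\,\omega$ by antisymmetry of the order, so the same identity holds in every case. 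Spelled out, this reads
\[
  \dupr{\omega}{axa}=\dupr{\omega}{a^2}\,\dupr{\omega}{x}\qquad\text{for all }x\in\A\,.
\]

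The main point is then to extract $\Var_\omega(a)=0$ from this identity, and here a naive evaluation (at $x=\Unit$ or $x=a$) does not suffice, since $a\acts\omega$ only sees the symmetric sandwich $axa$. The remedy is to polarize against the unit: applying the identity to $a+\Unit$ and to $a-\Unit$ (both in $\algebra{B}_\Hermitian$) and subtracting cancels the $axa$- and $x$-terms and produces $\dupr{\omega}{ax}+\dupr{\omega}{xa}=2\dupr{\omega}{a}\dupr{\omega}{x}$ for all $x\in\A$. Choosing $x=a$ collapses this to $2\dupr{\omega}{a^2}=2\dupr{\omega}{a}^2$, i.e. $\Var_\omega(a)=0$.

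Finally I would invoke Lemma~\ref{lemma:var}: vanishing variance of every $a\in\algebra{B}_\Hermitian$ gives $\dupr{\omega}{ba}=\dupr{\omega}{b}\dupr{\omega}{a}$ for all $b\in\A$, and writing an arbitrary element of $\algebra{B}$ as $a_1+\I a_2$ with $a_1,a_2\in\algebra{B}_\Hermitian$ extends this by linearity to all products in $\algebra{B}$, so $\omega$ is multiplicative on $\algebra{B}$. I expect the only genuinely delicate step to be the passage to $a\acts\omega=\dupr{\omega}{a^2}\,\omega$ and the realization that the unit-polarization is exactly what converts this second-moment identity into the vanishing of the variance.
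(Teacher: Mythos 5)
Your proof is correct, and its core is the same as the paper's: both apply Lemma~\ref{lemma:extremal} to $C_a^{-1}(a\acts\omega)$ to obtain $a\acts\omega=\dupr{\omega}{a^2}\,\omega$, and both reduce multiplicativity on $\algebra{B}$ to vanishing variances. The difference is only in how the variance is extracted. The paper evaluates the identity (for $a$ itself) at $a^2$, giving $\dupr{\omega}{a^4}=\dupr{\omega}{a^2}^2$, i.e.\ $\Var_\omega(a^2)=0$, and then invokes Lemma~\ref{lemma:varquadrat}; so your remark that pointwise evaluation ``does not suffice'' is accurate only for $x=\Unit$ and $x=a$ --- the evaluation point $x=a^2$ does suffice once Lemma~\ref{lemma:varquadrat} is in hand. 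You instead apply the identity to $a\pm\Unit$, subtract, and evaluate at $x=a$, obtaining $\Var_\omega(a)=0$ directly and needing only Lemma~\ref{lemma:var}. These are really the same computation performed in different places: the proof of Lemma~\ref{lemma:varquadrat} consists of exactly the $(a\pm\Unit)$-polarization you carry out by hand, so your argument in effect inlines that lemma, applied to the functionals $(a\pm\Unit)\acts\omega$ rather than to the moments $\dupr{\omega}{a(a\pm\Unit)^2}$. Your variant yields the slightly stronger intermediate statement $\Var_\omega(a)=0$ for all $a\in\algebra{B}_\Hermitian$ without passing through squares; the paper's version is shorter because Lemma~\ref{lemma:varquadrat} was already available. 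Your explicit treatment of the degenerate case $C_a=0$ (the paper disposes of it by a without-loss-of-generality, since $a\acts\omega\le 0\le C\,\omega$ for any $C>0$) and your verification that $a\acts\omega\in\Omega^+_\Hermitian$, which the paper leaves implicit, are both sound.
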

\begin{proof}
  Given $a\in \algebra{B}_\Hermitian$ and a corresponding $C_a\in[0,\infty[\,$, then we can
  assume without loss of generality that $C_a>0$, in which case it follows from
  the previous Lemma~\ref{lemma:extremal} that 
  $C_a^{-1}\,(a\acts\omega) = C_a^{-1}\,\dupr{a\acts\omega}{\Unit}\,\omega$.
  Evaluating this on $a^2$ yields $\dupr{\omega}{a^4} = \dupr{\omega}{a^2}^2$ and
  thus $\Var_\omega(a^2)=0$. By Lemma~\ref{lemma:varquadrat}, $\omega$ is multiplicative on
  $\algebra{B}$.
\end{proof}
It might be worth mentioning that for every commutative $^*$-algebra $\A$ and every algebraic state $\omega$
on $\A$ there exists the largest unital $^*$\=/subalgebra of $\A$ on which $\omega$ is multiplicative,
namely the set $\set{a\in\A}{\Var_\omega(a)=0}$, which can be confirmed to be a unital subalgebra of $\A$
using Lemma~\ref{lemma:var} and is even a unital $^*$-subalgebra because $\Var_\omega(a) = \Var_\omega(a^*)$
for all $a\in\A$ due to the commutativity of $\A$.
As a first application we can now show that in a symmetric commutative abstract $O^*$\=/algebra,
the set of characters coincides with the set of pure states:
\begin{definition}
  An abstract $O^*$\=/algebra $(\A,\Omega)$ is called \neu{symmetric}, if for every $a\in \A_\Hermitian$
  there exists a multiplicative inverse of $\Unit+a^2$ in $\A$.
\end{definition}
Note that an abstract $O^*$\=/algebra $(\A,\Omega)$ is symmetric if and only if for every
$a\in \A_\Hermitian$ and every $\lambda \in \CC\backslash \RR$ there exists a multiplicative
inverse of $a-\Unit \lambda$. So the above definition is completely analogous
to the one of a symmetric $^*$\=/algebra in \cite{schmuedgen}, Chapter~1.4.
This condition has also occured in the literature before, e.g. in a similar way
in \cite{kaplansky}, Theorem~26 as a condition that assures that a Baer ring is 
a Baer $^*$-ring.
\begin{theorem} \label{theorem:symmetric}
  Let $(\A,\Omega)$ be a symmetric commutative abstract $O^*$\=/algebra, then 
  $\PureStates(\A,\Omega) = \Characters(\A,\Omega)$.
\end{theorem}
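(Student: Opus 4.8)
The plan is to establish the nontrivial inclusion $\PureStates(\A,\Omega) \subseteq \Characters(\A,\Omega)$, the reverse already being Proposition~\ref{proposition:charactersAreExtreme}. So I would fix $\omega \in \PureStates(\A,\Omega)$; by Lemma~\ref{lemma:varquadrat} it then suffices to show $\Var_\omega(a^2) = 0$ for every $a \in \A_\Hermitian$. Fixing such an $a$, I set $r := (\Unit + a^2)^{-1} \in \A$, which exists by symmetry and is Hermitian because $\Unit + a^2$ is.

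The heart of the argument is to feed a suitable dominated functional into the extremality Lemma~\ref{lemma:extremal}. First I would record, using $r(\Unit + a^2) = \Unit$ together with commutativity, the two resolvent identities $r = r^2 + (ra)^2$ and $\Unit - r = (ra)^2 + (ra^2)^2$, which exhibit both $r$ and $\Unit - r$ as finite sums of squares, hence as elements of $\A^{++}_\Hermitian \subseteq \A^+_\Hermitian$. The step I expect to be the main obstacle is precisely the use of positivity here: in the abstract setting a product of two commuting positive elements need not be positive. What saves the argument is that for a sum of squares $y = \sum_k d_k^* d_k$ and any $x \in \A^+_\Hermitian$, commutativity turns $yx$ into $\sum_k d_k^*\, x\, d_k \in \A^+_\Hermitian$, since each summand is positive by the quasi-order axioms. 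Thus both $r$ and $\Unit - r$ are ``multiplier-positive''.

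With this in hand I would define $\rho \in \Omega$ by $\dupr{\rho}{x} := \dupr{\omega}{rx}$, that is $\rho = \Unit \cdot \omega \cdot r$, which lies in $\Omega$ as it is an $\A$-subbimodule and is Hermitian by commutativity. Multiplier-positivity of $r$ makes $\rho$ positive, so $\rho \in \Omega^+_\Hermitian$, while multiplier-positivity of $\Unit - r$ gives $\dupr{\omega - \rho}{x} = \dupr{\omega}{(\Unit - r)x} \ge 0$ for all $x \in \A^+_\Hermitian$, i.e. $\rho \le \omega$. Lemma~\ref{lemma:extremal} then yields $\rho = \dupr{\rho}{\Unit}\,\omega = \dupr{\omega}{r}\,\omega$, which unwinds to $\dupr{\omega}{rx} = \dupr{\omega}{r}\dupr{\omega}{x}$ for all $x \in \A$; taking $x = r$ and using that $\dupr{\omega}{r}$ is real gives $\Var_\omega(r) = 0$.

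Finally I would promote $\Var_\omega(r) = 0$ to $\Var_\omega(a^2) = 0$. Writing $N_\omega := \set{y \in \A}{\Var_\omega(y) = 0}$, the unital $^*$-subalgebra noted in the remark after Proposition~\ref{proposition:extremal}, the remaining task is to check that $N_\omega$ is stable under inversion of its invertible elements. For invertible $y \in N_\omega$ with $c := \dupr{\omega}{y}$, Lemma~\ref{lemma:var} applied to $y$ forces $c \ne 0$ and $\dupr{\omega}{y^{-1}} = 1/c$, while applying it to $y^* \in N_\omega$ (note $\Var_\omega(y^*) = \Var_\omega(y)$ by commutativity) with $b = y^{-*}y^{-1}$ gives $\dupr{\omega}{y^{-*}y^{-1}} = 1/\abs{c}^2 = \abs{\dupr{\omega}{y^{-1}}}^2$, i.e. $\Var_\omega(y^{-1}) = 0$. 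Hence $\Unit + a^2 = r^{-1} \in N_\omega$, so $a^2 \in N_\omega$; as $a \in \A_\Hermitian$ was arbitrary, Lemma~\ref{lemma:varquadrat} shows that $\omega$ is multiplicative, i.e. $\omega \in \Characters(\A,\Omega)$.
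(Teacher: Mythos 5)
Your proposal is correct, and while it shares its starting point with the paper's proof --- fix $a\in\A_\Hermitian$, pass to the inverse $r=(\Unit+a^2)^{-1}$, and feed a functional dominated by $\omega$ into Lemma~\ref{lemma:extremal} --- it genuinely diverges in both halves. The paper uses the two-sided functional $r\acts\omega=r\cdot\omega\cdot r^*$, whose Hermiticity and positivity come for free from the axioms, and proves the domination $r\acts\omega\le\omega$ via $\dupr{r\acts\omega}{c}\le\dupr{r\acts\omega}{(\Unit+a^2)^2c}=\dupr{\omega}{c}$; you instead use the one-sided translate $\rho=\Unit\cdot\omega\cdot r$, which costs you the verification of Hermiticity (via commutativity) and of positivity of $\rho$ and $\omega-\rho$ (via your resolvent identities $r=r^2+(ra)^2$ and $\Unit-r=(ra)^2+(ra^2)^2$ together with the commutative rearrangement $d^*d\,x=d^*x\,d\ge0$) --- though note the paper's domination step secretly uses the same sandwich-positivity trick, just packaged inside $r\acts\omega$. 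The second divergence is how multiplicativity is concluded: the paper, after obtaining $r\acts\omega=\dupr{\omega}{r^2}\,\omega$ and checking $\dupr{\omega}{r^2}>0$, goes on to dominate $a\acts\omega\le\dupr{\omega}{r^2}^{-1}\omega$ and invokes Proposition~\ref{proposition:extremal} for $\algebra{B}=\A$; you instead read off $\Var_\omega(r)=0$ directly and then prove that the zero-variance subalgebra $N_\omega$ is stable under inversion of its invertible elements (a correct and reusable observation, obtained from Lemma~\ref{lemma:var} applied to $y$ and to $y^*$), which gives $\Var_\omega(\Unit+a^2)=0$, hence $\Var_\omega(a^2)=0$, and Lemma~\ref{lemma:varquadrat} finishes. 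What each route buys: the paper's stays entirely inside the order-theoretic machinery, never needs Hermiticity or positivity of a one-sided translate, and reuses Proposition~\ref{proposition:extremal} as a black box; yours bypasses Proposition~\ref{proposition:extremal} altogether, avoids the separate argument that $\dupr{\omega}{r^2}>0$, and isolates the inversion-stability of the multiplicative domain as an independent algebraic fact --- at the price of leaning on commutativity at a few more places, which is harmless here since the theorem assumes it.
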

\begin{proof}
  Proposition~\ref{proposition:charactersAreExtreme} already shows that
  $\PureStates(\A,\Omega) \supseteq \Characters(\A,\Omega)$ and it remains
  to show that every pure state $\omega$ of $(\A,\Omega)$ is multiplicative.
  So let $a\in \A_\Hermitian$ be given and write $b := (\Unit+a^2)^{-1}$,
  then $b^* = b^*(\Unit+a^2)\,b = \big(b\,(\Unit+a^2)\big)^*b = b$.
  Assume $\dupr{\omega}{b^2} = 0$, then $\abs{\dupr{\omega}{b}}^2 \le \dupr{\omega}{b^2} = 0$
  implies $\dupr{\omega}{b} = \Var_\omega(b) = 0$
  and $1=\dupr{\omega}{(\Unit+a^2)\,b} = \dupr{\omega}{\Unit+a^2}\dupr{\omega}{b} = 0$
  by Lemma~\ref{lemma:var} yields a contradiction, so $\dupr{\omega}{b^2} > 0$. 
  
  Now observe that $b\acts \omega \le \omega$ because 
  $ \dupr{b\acts \omega}{c}
    \le
    \dupr{b\acts \omega}{c+2a^2c+a^4c}
    =
    \dupr{\omega}{c}$
  for all $c\in \A^+_\Hermitian$,
  hence $b\acts \omega = \dupr{\omega}{b^2}\,\omega$ by Lemma~\ref{lemma:extremal}.
  It follows that $a\acts \omega = \dupr{\omega}{b^2}^{-1}\,(ab\acts\omega) \le \dupr{\omega}{b^2}^{-1}\,\omega$,
  because
  $ \dupr{ab\acts \omega}{c} 
    \le
    \dupr{b\acts \omega}{c+2a^2c+a^4c}
    =
    \dupr{\omega}{c}
  $
  holds for all $c\in \A^+_\Hermitian$. By Proposition~\ref{proposition:extremal},
  $\omega$ is multiplicative on $\A$.
\end{proof}
However, the assumption of a symmetric commutative abstract $O^*$\=/algebra
is a rather strong one. In the following, similar theorems for more general classes of
algebras will be proven.
\section{Bounded states} \label{sec:bounded}
Another possibility to show that all pure states of certain commutative abstract $O^*$\=/algebras
are characters, is by exploiting some boundedness condition:
\begin{definition}
  Let $(\A,\Omega)$ be an abstract $O^*$\=/algebra and $\omega \in \States(\A,\Omega)$,
  then
  $\seminorm{\omega,\infty}{\argument}\colon \A \to [0,\infty]$ is defined as
  \begin{equation*}
    a\mapsto \seminorm{\omega,\infty}{a} := \sup_{b\in\A, \dupr{\omega}{b^*b}=1} \sqrt{\dupr{b\acts\omega}{a^*a}} \in [0,\infty]\,.
  \end{equation*}
  Moreover, given $a\in \A$, then $\omega$ is said to be a \neu{bounded state for $a$} if $\seminorm{\omega,\infty}{a} < \infty$.
  The set of all $a\in \A$ for which $\omega$ is bounded will be denoted by 
  $\Bounded_\omega(\A,\Omega)$ and $\Bounded(\A,\Omega) := \bigcap_{\omega\in \States(\A,\Omega)} \Bounded_\omega(\A,\Omega)$ is the set of all
  \neu{bounded} elements of $\A$.
\end{definition}
Note that $\Bounded(\A,\Omega)$ and $\Bounded_\omega(\A,\Omega)$ for every 
$\omega \in \States(\A,\Omega)$ are unital $^*$\=/subalgebras and that $\seminorm{\omega,\infty}{\argument}$ is a 
$C^*$\=/seminorm on $\Bounded_\omega(\A,\Omega)$. 
In fact, $\seminorm{\omega,\infty}{\argument}$ is nothing but the operator norm in the GNS representation
of $\A$ associated to $\omega$. This also shows that $\seminorm{\omega,\infty}{a}<\infty$ if and only if 
$a$ is represented by a bounded operator and that
$\dupr{b\acts \omega}{a^*a} \le \seminorm{\omega,\infty}{a}^2 \dupr{\omega}{b^*b}$
holds for all $a\in \Bounded_\omega(\A,\Omega)$ and all $b\in \A$.

In the following, we will often have to distinguish two cases: Either
$\dupr{\omega}{a} = 0$ for a state $\omega$ and a positive algebra element $a$, typically of the form
$a=b^*b$, then everything is trivial; or $\dupr{\omega}{a^n}>0$ for all $n\in \NN$:
\begin{lemma} \label{lemma:growth}
  Let $\A$ be a quasi-ordered $^*$\=/algebra, $\omega \in \A^{*,+}_\Hermitian$  and
  $a\in\A^+_\Hermitian$. Then $\dupr{\omega}{a^n} = 0$ for one $n\in \NN$ implies $\dupr{\omega}{a^n} = 0$
  for all $n\in \NN$ and also $\Var_\omega(a) = 0$.
  Otherwise, $\dupr{\omega}{a^n} > 0$ and
  \begin{align*}
    \frac{\dupr{\omega}{a^n}}{\dupr{\omega}{a^{n-1}}} &\le \frac{\dupr{\omega}{a^{n+1}}}{\dupr{\omega}{a^{n}}} \\
    \text{as well as}\quad\quad
    \dupr[\big]{\omega}{a^n}^{\frac{1}{n}} &\le \dupr[\big]{\omega}{a^{n+1}}^\frac{1}{n+1}
  \end{align*}
  hold for all $n\in \NN$.
\end{lemma}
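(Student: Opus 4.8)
The plan is to base the whole lemma on the single \emph{log-convexity} estimate $\dupr{\omega}{a^n}^2 \le \dupr{\omega}{a^{n-1}}\dupr{\omega}{a^{n+1}}$ for $n\ge 1$, from which every assertion drops out by elementary manipulations. Writing $m_p := \dupr{\omega}{a^p}$, I would first record that all moments are non-negative. For $p=0$ this is $m_0 = \dupr{\omega}{\Unit}\ge 0$, since $\Unit\gtrsim 0$ and $\omega\in\A^{*,+}_\Hermitian$. For $p\ge 1$ one has $a^p\in\A^+_\Hermitian$: even powers are of the form $a^{2j}=(a^j)^*a^j\in\A^{++}_\Hermitian$ (using $a^*=a$), and odd powers $a^{2j+1}=(a^j)^*a\,a^j\gtrsim 0$ follow from $a\gtrsim 0$ together with the fact that $d^*a\,d\gtrsim 0$ whenever $a\gtrsim 0$ (a consequence of the quasi-order axioms, applied with $d=a^j$). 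Hence $m_p\ge 0$ for all $p\ge 0$.

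The key step is a Cauchy--Schwarz argument. For each fixed $p\ge 0$ the element $a^p$ is positive and Hermitian, so $(b,c)\mapsto \dupr{\omega}{b^*a^p c}$ is a positive semidefinite Hermitian sesquilinear form on $\A$ (positivity from $b^*a^p b\gtrsim 0$, Hermiticity from $(a^p)^*=a^p$ and $\omega^*=\omega$). Applying the Cauchy--Schwarz inequality for this form at $b=\Unit$ and $c=a$, with $p=n-1$, gives exactly $m_n^2\le m_{n-1}m_{n+1}$ for every $n\ge 1$. This one inequality is the engine of the whole lemma and is, I expect, its only genuinely creative ingredient.

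Next I would dispose of the dichotomy. If $m_N=0$ for some $N\in\NN$, the estimate forces both neighbours to vanish once the middle does: $m_{N-1}^2\le m_{N-2}m_N=0$ and $m_{N+1}^2\le m_N m_{N+2}=0$, so the zero spreads in both directions and reaches $m_1$ via $m_1^2\le m_0 m_2=0$; thus $m_n=0$ for all $n\in\NN$. In particular $m_1=m_2=0$, whence $\Var_\omega(a)=\dupr{\omega}{a^*a}-\abs{m_1}^2=m_2=0$. In the complementary case all $m_n$ are strictly positive, and dividing $m_n^2\le m_{n-1}m_{n+1}$ by $m_{n-1}m_n>0$ yields the first claimed inequality $m_n/m_{n-1}\le m_{n+1}/m_n$ at once.

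For the root inequality I would set $r_k:=m_k/m_{k-1}$, which the first inequality shows to be non-decreasing, and write $m_n=m_0\,r_1\cdots r_n$. Under the normalization $m_0=\dupr{\omega}{\Unit}=1$ the quantity $m_n^{1/n}=(r_1\cdots r_n)^{1/n}$ is the geometric mean of $r_1,\dots,r_n$, and $r_{n+1}\ge r_k$ for all $k\le n$ gives $r_{n+1}\ge(r_1\cdots r_n)^{1/n}=m_n^{1/n}$, so $m_{n+1}=m_n\,r_{n+1}\ge m_n^{(n+1)/n}$, i.e. $m_n^{1/n}\le m_{n+1}^{1/(n+1)}$. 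The point I would be most careful about is precisely here: unlike the first inequality, the root inequality is \emph{not} invariant under rescaling $\omega$ and genuinely relies on $\dupr{\omega}{\Unit}=1$ (it already fails, e.g., for twice a character), so I would prove it under the normalization that $\omega$ is a state, whereas everything preceding uses only $\omega\in\A^{*,+}_\Hermitian$. This normalization subtlety, rather than any calculation, is the step I expect to be the real obstacle.
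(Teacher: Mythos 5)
Your proof is correct, and it runs on the same engine as the paper's: a Cauchy--Schwarz argument producing the log\-/convexity estimate $\dupr{\omega}{a^n}^2 \le \dupr{\omega}{a^{n-1}}\dupr{\omega}{a^{n+1}}$, from which the vanishing dichotomy and the quotient inequality follow exactly as in the paper. The differences in execution are organizational rather than conceptual: you apply Cauchy--Schwarz to the family of forms $(b,c)\mapsto\dupr{\omega}{b^*a^{n-1}c}$ at the pair $(\Unit,a)$, which is why you need all powers $a^p$ to be positive (a fact you correctly establish from the quasi\-/order axioms), whereas the paper uses only the two forms $(b,c)\mapsto\dupr{\omega}{b^*c}$ and $(b,c)\mapsto\dupr{\omega}{b^*a\,c}$ at the pairs $(a^{m-1},a^m)$ and treats even and odd $n$ separately; and you deduce the root inequality by comparing $r_{n+1}$ with the geometric mean of $r_1,\dots,r_n$, where the paper runs an induction --- the two computations are equivalent.

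The substantive point of comparison is your normalization caveat, and there you are right and the lemma as printed is not. For general $\omega\in\A^{*,+}_\Hermitian$ the root inequality is false: take $\A=\CC$ with its natural order, $\omega = 2\,\mathrm{id}$ and $a=\Unit$; then $\omega\in\A^{*,+}_\Hermitian$ and $a\in\A^+_\Hermitian$, but $\dupr{\omega}{a^n}^{1/n} = 2^{1/n}$ is strictly decreasing in $n$. The paper's proof conceals this exactly at the base case: ``just the Cauchy Schwarz inequality'' yields $\dupr{\omega}{a}^2\le\dupr{\omega}{\Unit}\dupr{\omega}{a^2}$, which gives the claimed $\dupr{\omega}{a}\le\dupr{\omega}{a^2}^{1/2}$ only when $\dupr{\omega}{\Unit}\le 1$; the induction step itself is scale\-/invariant and correct, as are the dichotomy and the quotient inequality. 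So the root inequality should carry the hypothesis $\dupr{\omega}{\Unit}\le 1$, and your argument works in that generality with the single extra observation that $m_0^{1/n}\le m_0^{1/(n+1)}$ for $0<m_0\le 1$. This correction does no harm downstream in the paper: the later applications of the root inequality are to states or to functionals $b\acts\omega$ with $\dupr{\omega}{b^*b}=1$, and in Corollary~\ref{corollary:main}, where $b\acts\omega$ need not be normalized, only the divergence of the series $\sum_{n}\dupr{b\acts\omega}{(q')^{n}}^{-1/(2n)}$ is at stake, which is insensitive to rescaling $b\acts\omega$ by the constant $\dupr{\omega}{b^*b}$. In short: not only is your proof correct, but the step you flagged as ``the real obstacle'' is a genuine flaw in the stated lemma, not in your argument.
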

\begin{proof}
  The sesquilinear form $\A^2\ni(b,c) \mapsto \dupr{\omega}{b^*a\,c} \in \CC$ is positive because $a\in \A^+_\Hermitian$,
  which yields $\abs{\dupr{\omega}{b^*a\,c}}^2 \le \dupr{\omega}{b^*a\,b}\dupr{\omega}{c^*a\,c}$
  for all $b,c\in \A$ by the Cauchy Schwarz inequality. So $\dupr{\omega}{a^{m-1} a^{m}}^2 \le \dupr{\omega}{a^{2m-2}} \dupr{\omega}{a^{2m}}$
  and $\dupr{\omega}{a^{m-1} a\, a^m}^2 \le \dupr{\omega}{a^{2m-1}} \dupr{\omega}{a^{2m+1}}$ hold
  for all $m\in \NN$ and show that $\dupr{\omega}{a^n}^2 \le \dupr{\omega}{a^{n-1}}\dupr{\omega}{a^{n+1}}$
  for all odd and all even $n\in \NN$, hence for all $n\in \NN$.
  Especially if $\dupr{\omega}{a^{n-1}}=0$ or $\dupr{\omega}{a^{n+1}}=0$ then also
  $\dupr{\omega}{a^n} = 0$. By induction it follows that $\dupr{\omega}{a^n} = 0$ for one $n\in \NN$ implies $\dupr{\omega}{a^n} = 0$
  for all $n\in \NN$, and then also $\Var_\omega(a) = 0$.

  Otherwise $\dupr{\omega}{a^n} > 0$ for all $n\in \NN$, because $a^{2m} = (a^m)^*(a^m)$
  and $a^{2m+1} = (a^m)^* a \,(a^m)$ are positive for all $m\in \NN_0$. 
  The estimate for quotients has already been proven, the one for roots is surely true if $n=1$,
  in which case it is just the Cauchy Schwarz inequality again. Now assume that it
  holds for one $n\in \NN$, then
  \begin{equation*}
    \dupr[\big]{\omega}{a^{n+1}}^{\frac{1}{n+1}} 
    \le
    \frac{\dupr{\omega}{a^{n+1}}}{ \dupr{\omega}{a^{n}} }
    \le
    \frac{\dupr{\omega}{a^{n+2}}}{ \dupr{\omega}{a^{n+1}} }\,,
  \end{equation*}
  which then implies
  $\dupr{\omega}{a^{n+1}}^{1/(n+1)} \le \dupr{\omega}{a^{n+2}}^{1/(n+2)}$.
\end{proof}
In the case of commutative abstract $O^*$\=/algebras, $\seminorm{\omega,\infty}{a}$ only depends on $\omega$ and
powers of $a$ and $a^*$:
\begin{proposition}
  Let $(\A,\Omega)$ be a commutative abstract $O^*$\=/algebra and $\omega \in \States(\A,\Omega)$,
  then
  \begin{equation*}
    \seminorm{\omega,\infty}{a} 
    = 
    \sup_{n\in \NN} \,\dupr[\big]{\omega}{(a^*a)^n}^{\frac{1}{2n}} 
    =
    \lim_{n\to\infty} \dupr[\big]{\omega}{(a^*a)^n}^{\frac{1}{2n}}
    \in [0,\infty]
  \end{equation*}
  holds for all $a\in \A$.
\end{proposition}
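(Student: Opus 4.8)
The plan is to abbreviate $x := a^*a \in \A^+_\Hermitian$ and to unravel the definitions: by the definition of $\seminorm{\omega,\infty}{\argument}$ and of the monoid action $\acts$ one has $\seminorm{\omega,\infty}{a}^2 = \sup\set{\dupr{\omega}{b^*xb}}{b\in\A,\ \dupr{\omega}{b^*b}=1} =: M$. Since $x$ is positive and $\omega$ positive, Lemma~\ref{lemma:growth} applies to the sequence $\dupr{\omega}{x^n}$: either it vanishes for all $n$, in which case I will check below that $M=0$ as well, or $\dupr{\omega}{x^n}>0$ for all $n$ and $\dupr{\omega}{x^n}^{1/n}$ is nondecreasing, so that $\sup_n \dupr{\omega}{x^n}^{1/(2n)} = \lim_n \dupr{\omega}{x^n}^{1/(2n)} = \sqrt{L}$ with $L := \sup_n \dupr{\omega}{x^n}^{1/n}$. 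The coincidence of supremum and limit is thus immediate from Lemma~\ref{lemma:growth}, and it only remains to prove $M = L$.

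For $L \le M$ I would assume $M<\infty$ (the inequality being vacuous otherwise) and test the bound $\dupr{b\acts\omega}{a^*a}\le \seminorm{\omega,\infty}{a}^2\,\dupr{\omega}{b^*b}$ on $b := x^k$: using $x^*=x$ and commutativity this reads $\dupr{\omega}{x^{2k+1}}\le M\,\dupr{\omega}{x^{2k}}$, i.e. the quotient $q_{2k+1} := \dupr{\omega}{x^{2k+1}}/\dupr{\omega}{x^{2k}}$ is at most $M$ for every $k$. Since Lemma~\ref{lemma:growth} also asserts that the quotients $q_n := \dupr{\omega}{x^n}/\dupr{\omega}{x^{n-1}}$ are nondecreasing, every $q_n$ is dominated by some $q_{2k+1}\le M$, whence $q_n\le M$ for all $n$ and, telescoping, $\dupr{\omega}{x^n}\le M^n$. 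Taking $n$-th roots gives $L\le M$.

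The harder direction is $M\le L$, and the obstacle is to decouple the test element $b$ from the powers of $x$ so that only the moments $\dupr{\omega}{x^m}$ survive. I would fix $b$ with $\dupr{\omega}{b^*b}=1$, write $p := b^*b$, and note that $\mu := b\acts\omega$ is algebraically positive with $\dupr{\mu}{\Unit}=1$ and $\dupr{\mu}{x^{2^k}} = \dupr{\omega}{p\,x^{2^k}}$. Iterating the Cauchy--Schwarz inequality for $\mu$ (the power-mean estimate $\dupr{\mu}{x}^{2^k}\le \dupr{\mu}{\Unit}^{2^k-1}\dupr{\mu}{x^{2^k}}$, proved by induction on $k$ using $x^*=x$) yields $\dupr{\omega}{b^*xb} = \dupr{\mu}{x} \le \dupr{\omega}{p\,x^{2^k}}^{2^{-k}}$. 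A single further Cauchy--Schwarz for $\omega$, applied to the Hermitian factors $p$ and $x^{2^k}$, gives $\dupr{\omega}{p\,x^{2^k}} \le \dupr{\omega}{p^2}^{1/2}\,\dupr{\omega}{x^{2^{k+1}}}^{1/2} \le \dupr{\omega}{p^2}^{1/2}\, L^{2^k}$, where the last step uses $\dupr{\omega}{x^m}\le L^m$ (valid by definition of $L$; the case $L=\infty$ will be dealt with separately). Combining, $\dupr{\omega}{b^*xb}\le \dupr{\omega}{p^2}^{2^{-(k+1)}}\,L$, and letting $k\to\infty$ the prefactor tends to $1$, so $\dupr{\omega}{b^*xb}\le L$. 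Taking the supremum over normalized $b$ gives $M\le L$; the same chain with $L=0$ settles the trivial case where all moments vanish, and the case $L=\infty$ follows from the first direction by contraposition.

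The step I expect to require the most care is this final decoupling: the factor $\dupr{\omega}{(b^*b)^2}^{2^{-(k+1)}}$ produced by separating $b$ off is harmless precisely because its exponent tends to $0$, so that the growth rate of $x$ seen through the weight $b^*b$ cannot exceed the growth rate $L$ seen through the unit. Commutativity of $\A$ is used throughout, both to move the Hermitian powers of $x$ past $b$ and to write $b^*x^{2^k}b = p\,x^{2^k}$, and without it the argument would not close.
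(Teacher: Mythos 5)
Your proof is correct, and for one of the two inequalities it takes a genuinely different route from the paper's. Writing $x:=a^*a$, $M:=\seminorm{\omega,\infty}{a}^2$ and $L:=\sup_{n\in\NN}\dupr{\omega}{x^n}^{1/n}$ as you do: your argument for $M\le L$ is essentially the paper's --- the paper simply cites the root inequality of Lemma~\ref{lemma:growth} for the algebraic state $b\acts\omega$ (you re-derive it along dyadic powers by iterating the Cauchy--Schwarz inequality, which is also how that lemma is proved), and then both you and the paper decouple $b^*b$ from the power of $x$ by one further Cauchy--Schwarz inequality and let the exponent on the prefactor $\dupr{\omega}{(b^*b)^2}$ tend to zero. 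For $L\le M$, however, the paper argues quite differently: it invokes its earlier remark (stated without proof, via the operator-norm interpretation of $\seminorm{\omega,\infty}{\argument}$ in the GNS representation) that $\seminorm{\omega,\infty}{\argument}$ is a $C^*$-seminorm on $\Bounded_\omega(\A,\Omega)$, which yields $\dupr{\omega}{x^n}\le\seminorm{\omega,\infty}{a}^{2n}$ at once; you instead test the defining supremum with $b=x^k$ to get $\dupr{\omega}{x^{2k+1}}\le M\,\dupr{\omega}{x^{2k}}$, then use the quotient monotonicity of Lemma~\ref{lemma:growth} to bound every quotient $\dupr{\omega}{x^n}/\dupr{\omega}{x^{n-1}}$ by $M$, and telescope to $\dupr{\omega}{x^n}\le M^n$. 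Your version of this direction is more elementary and self-contained: it uses nothing beyond the definition of $\seminorm{\omega,\infty}{\argument}$ and Lemma~\ref{lemma:growth}, and in particular does not presuppose the submultiplicativity or $C^*$-property of $\seminorm{\omega,\infty}{\argument}$ (in both approaches, commutativity of $\A$ is needed only for the direction $M\le L$); the paper's version is shorter once the operator-theoretic remark is granted. Your handling of the degenerate cases (all moments of $x$ vanishing, $M=\infty$, $L=\infty$) is also correct, though the remark that the case $L=\infty$ ``follows by contraposition'' is superfluous, since $M\le L$ is then trivial.
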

\begin{proof}
  The second identity is clear because $n\mapsto \dupr{\omega}{(a^*a)^n}^{1/(2n)}$ is non-decreasing
  by the previous Lemma~\ref{lemma:growth}. Define the shorthand 
  $\seminorm{\omega,\infty}{a}' := \sup_{n\in \NN} \dupr{\omega}{(a^*a)^{n}}^{1/(2n)}$,
  then
  \begin{equation*}
  \dupr{b\acts\omega}{a^*a}
  \le
  \dupr{b\acts\omega}{(a^*a)^m}^{\frac{1}{m}}
  \le
  \dupr{\omega}{(b^*b)^2}^{\frac{1}{2m}}\dupr{\omega}{(a^*a)^{2m}}^{\frac{1}{2m}}
  \le
  \dupr{\omega}{(b^*b)^2}^{\frac{1}{2m}}\,\big(\seminorm{\omega,\infty}{a}'\big)^2
  \end{equation*}
  holds for all $a, b\in \A$ with $\dupr{\omega}{b^*b}=1$ and all $m\in \NN$
  by Lemma~\ref{lemma:growth} again and the Cauchy Schwarz inequality, 
  thus $\seminorm{\omega,\infty}{a} \le \seminorm{\omega,\infty}{a}'$.
  Conversely, if $\seminorm{\omega,\infty}{a} < \infty$, then
  $\dupr{\omega}{(a^*a)^n} \le \seminorm{\smash{\omega,\infty}}{a}^{\smash{2n}}$ for all $n\in \NN$, because $\seminorm{\omega,\infty}{\argument}$
  is a $C^*$\=/seminorm on the unital $^*$\=/subalgebra $\Bounded_\omega(\A,\Omega)$ of $\A$ and because
  $\abs{\dupr{\omega}{c}} \le \dupr{\omega}{c^*c}^{\smash{1/2}} \le \seminorm{\omega,\infty}{c}$ for all $c\in \Bounded_\omega(\A,\Omega)$.
  This proves $\smash{\seminorm{\omega,\infty}{a}' \le \seminorm{\omega,\infty}{a}}$.
\end{proof}
If such a boundedness condition is fulfilled,
we can replace symmetry by a different requirement:
\begin{definition}
  An abstract $O^*$\=/algebra $(\A,\Omega)$ is said to be \neu{regular} if every 
  $\omega \in \Omega_\Hermitian$ that is algebraically positive, is positive, i.e. an element of $\Omega^+_\Hermitian$.
\end{definition}
Note that for regular abstract $O^*$-algebras, the set of characters $\Characters(\A,\Omega)$
is just the set of all $\omega\in\Omega$ which are unital $^*$-homomorphisms to $\CC$,
because such homomorphisms are algebraically positive, hence positive.
\begin{definition}
  Let $(\A,\Omega)$ be an abstract $O^*$\=/algebra, then define for every $\omega \in \Omega$
  the seminorm $\seminorm{\omega,\wk}{\argument}$ on $\A$ as 
  \begin{equation*}
    a \mapsto \seminorm{\omega,\wk}{a} := \abs{\dupr{\omega}{a}}\,.
  \end{equation*}
  The locally convex topology on $\A$ defined by all $\seminorm{\omega,\wk}{\argument}$ 
  with $\omega \in \Omega$ is called the \neu{weak topology}.
\end{definition}
\begin{proposition} \label{proposition:regular}
  Let $(\A,\Omega)$ be an abstract $O^*$\=/algebra, then $(\A,\Omega)$ is regular
  if and only if the algebraically positive elements $\A^{++}_\Hermitian$ 
  are weakly dense in the positive elements $\A^+_\Hermitian$.
\end{proposition}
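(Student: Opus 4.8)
The plan is to recognise both conditions as statements about one and the same dual pairing, namely that of the real vector space $\A_\Hermitian$ with $\Omega_\Hermitian$, and then to play them against each other by a separation argument, exactly in the spirit of the proof of Proposition~\ref{proposition:construction}. First I would do the bookkeeping. The cone $\A^+_\Hermitian = \set{a\in\A_\Hermitian}{\forall_{\omega\in\Omega^+_\Hermitian}:\dupr{\omega}{a}\ge 0}$ is an intersection of weakly closed half-spaces, hence weakly closed, and it contains $\A^{++}_\Hermitian$; therefore the assertion that $\A^{++}_\Hermitian$ is weakly dense in $\A^+_\Hermitian$ is equivalent to saying that $\A^+_\Hermitian$ is contained in the weak closure of $\A^{++}_\Hermitian$. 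Next I would identify the weakly continuous real-linear functionals on $\A_\Hermitian$ with $\Omega_\Hermitian$: for $\omega\in\Omega$ and $a\in\A_\Hermitian$ one has $\Re\dupr{\omega}{a} = \dupr{\tfrac12(\omega+\omega^*)}{a}$ and $\Im\dupr{\omega}{a} = \dupr{\tfrac{-\I}{2}(\omega-\omega^*)}{a}$ with $\tfrac12(\omega+\omega^*),\tfrac{-\I}{2}(\omega-\omega^*)\in\Omega_\Hermitian$, so the defining seminorms $\seminorm{\omega,\wk}{\argument}$ restricted to $\A_\Hermitian$ generate precisely the topology of the dual pair $(\A_\Hermitian,\Omega_\Hermitian)$. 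Finally I would record that an $\eta\in\Omega_\Hermitian$ is algebraically positive exactly when $\dupr{\eta}{b}\ge 0$ for all $b\in\A^{++}_\Hermitian$, and positive exactly when $\dupr{\eta}{a}\ge 0$ for all $a\in\A^+_\Hermitian$; since positivity always implies algebraic positivity, regularity of $(\A,\Omega)$ is simply the equality of these two subsets of $\Omega_\Hermitian$.

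For the implication from density to regularity I would argue by continuity. Given an algebraically positive $\eta\in\Omega_\Hermitian$ and any $a\in\A^+_\Hermitian$, weak density supplies a net in $\A^{++}_\Hermitian$ converging weakly to $a$; since $\eta$ is weakly continuous and satisfies $\dupr{\eta}{\argument}\ge 0$ on $\A^{++}_\Hermitian$, passing to the limit gives $\dupr{\eta}{a}\ge 0$. As $a\in\A^+_\Hermitian$ was arbitrary, $\eta$ is positive, so $(\A,\Omega)$ is regular. No separation theorem is needed here.

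The converse direction is where the separation theorem enters. Assuming $(\A,\Omega)$ regular, I would show that $\A^+_\Hermitian$ lies in the weak closure of $\A^{++}_\Hermitian$ by contradiction: if some $a\in\A^+_\Hermitian$ were not in this weakly closed convex cone, the separation theorem (\cite{koethe}, \S20.7 (2), as in Proposition~\ref{proposition:construction}) would provide a weakly continuous functional, i.e. an $\eta\in\Omega_\Hermitian$, strictly separating $a$ from the cone; because the separated set is a convex cone containing $0$, the functional may be taken with $\dupr{\eta}{b}\ge 0$ for all $b\in\A^{++}_\Hermitian$ and $\dupr{\eta}{a}<0$. Then $\eta$ is an algebraically positive element of $\Omega_\Hermitian$, hence positive by regularity, which forces $\dupr{\eta}{a}\ge 0$ and contradicts $\dupr{\eta}{a}<0$.

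I expect the only genuine subtleties, beyond this routine separation, to be the careful identification of the weak dual of the \emph{real} space $\A_\Hermitian$ with $\Omega_\Hermitian$, and the verification that the separating functional can be chosen nonnegative on the cone rather than merely bounded below on it; both are handled just as in the proof of Proposition~\ref{proposition:construction}.
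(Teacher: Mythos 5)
Your proposal is correct and takes essentially the same route as the paper: the forward implication follows from weak continuity of an algebraically positive $\eta\in\Omega_\Hermitian$, and the converse is the same contraposition via the separation theorem (K\"othe, \S 20.7~(2)) together with the cone argument that makes the separating functional nonnegative on $\A^{++}_\Hermitian$ and strictly negative at the point $a\in\A^+_\Hermitian$. Your explicit identification of the weakly continuous real-linear functionals on $\A_\Hermitian$ with $\Omega_\Hermitian$ is careful bookkeeping the paper leaves implicit (it only remarks parenthetically that one may pass to the quotient by the closure of $\{0\}$), but it does not change the substance of the argument.
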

\begin{proof}
  If $\A^{++}_\Hermitian$ is dense in $\A^+_\Hermitian$ and $\omega \in \Omega_\Hermitian$
  algebraically positive,
  then it follows from $\dupr{\omega}{a} \ge 0$ for all $a\in\A^{++}_\Hermitian$ 
  that $\dupr{\omega}{a} \ge 0$ for all $a\in \A^+_\Hermitian$, hence $\omega\in\Omega^+_\Hermitian$.
  
  Conversely, if $\A^{++}_\Hermitian$ is not dense in $\A^+_\Hermitian$, then
  there exist $a\in \A^+_\Hermitian$ and $\omega \in \Omega_\Hermitian$ such that
  $\dupr{\omega}{b} \ge \dupr{\omega}{a} + 1$ holds for all $b\in \A^{++}_\Hermitian$
  by the separation theorem for the closure of the convex set $\A^{++}_\Hermitian$
  from the compact point $a \in \A^+_\Hermitian \backslash \A^{++}_\Hermitian$
  in the real locally convex space $\A_\Hermitian$ carrying the weak topology of $\Omega_\Hermitian$
  (take the quotient with the closure of $\{0\}$ if necessary), again, see e.g. \cite{koethe},
  §20.7 (2). Even more,
  $\dupr{\omega}{b} \ge 0 \ge \dupr{\omega}{a} + 1$ for all $b\in \A^{++}_\Hermitian$
  because $\A^{++}_\Hermitian$ is a cone, so $\omega$ is algebraically positive
  but $\dupr{\omega}{a} < 0$, i.e. $(\A,\Omega)$ is not regular.
\end{proof}
\begin{proposition} \label{proposition:v1}
  Let $(\A,\Omega)$ be a regular commutative abstract $O^*$\=/algebra and $\omega \in \PureStates(\A,\Omega)$,
  then the restriction of $\omega$ to $\Bounded_\omega(\A,\Omega)$ is multiplicative.
\end{proposition}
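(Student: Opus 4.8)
The plan is to exploit the boundedness hypothesis so that Proposition~\ref{proposition:extremal} becomes applicable with $\algebra{B} = \Bounded_\omega(\A,\Omega)$. To invoke that proposition I need, for every Hermitian $a\in \Bounded_\omega(\A,\Omega)$, a constant $C_a\in[0,\infty[$ with $a\acts\omega \le C_a\,\omega$. The natural guess is $C_a = \seminorm{\omega,\infty}{a}^2$, since the remark after the boundedness definition already records that $\dupr{b\acts\omega}{a^*a} \le \seminorm{\omega,\infty}{a}^2\dupr{\omega}{b^*b}$ for all bounded $a$ and all $b\in\A$. So the first step is to translate this operator-norm bound into the required order inequality $a\acts\omega \le \seminorm{\omega,\infty}{a}^2\,\omega$ in $\Omega^+_\Hermitian$.

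The main obstacle is precisely this translation: the order on $\Omega_\Hermitian$ is tested against \emph{all} positive elements $c\in\A^+_\Hermitian$, whereas the $C^*$-seminorm estimate only controls elements of the form $c=a^*a$ (equivalently $c=d^*d$ for $d\in\A$), i.e.\ elements of $\A^{++}_\Hermitian$, and $\A^{++}_\Hermitian$ may be strictly smaller than $\A^+_\Hermitian$. This is where regularity enters. First I would establish, for a fixed Hermitian bounded $a$ and arbitrary $d\in\A$, that $\dupr[\big]{\seminorm{\omega,\infty}{a}^2\,\omega - a\acts\omega}{d^*d} = \seminorm{\omega,\infty}{a}^2\dupr{\omega}{d^*d} - \dupr{\omega}{(ad)^*(ad)} \ge 0$, using commutativity to write $\dupr{a\acts\omega}{d^*d} = \dupr{\omega}{a\,d^*d\,a^*} = \dupr{\omega}{(ad)^*(ad)}$ and then applying the norm bound with $b=d$ and the element $a$. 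This shows the functional $\seminorm{\omega,\infty}{a}^2\,\omega - a\acts\omega$ is \emph{algebraically} positive.

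The second step is to upgrade algebraic positivity to genuine positivity. The functional $\seminorm{\omega,\infty}{a}^2\,\omega - a\acts\omega$ lies in $\Omega_\Hermitian$ (as $\omega$ and $a\acts\omega$ do, the latter because $\Omega$ is an $\A$-subbimodule stable under $\acts$), and I have just shown it is algebraically positive; regularity of $(\A,\Omega)$ then immediately yields that it is positive, i.e.\ an element of $\Omega^+_\Hermitian$, which is exactly $a\acts\omega \le \seminorm{\omega,\infty}{a}^2\,\omega$. With this order inequality in hand for every $a\in\Bounded_\omega(\A,\Omega)_\Hermitian$, the hypotheses of Proposition~\ref{proposition:extremal} are met with $\algebra{B} = \Bounded_\omega(\A,\Omega)$ (which is a unital $^*$-subalgebra by the remark following its definition), so $\omega$ is multiplicative on $\Bounded_\omega(\A,\Omega)$, completing the proof.
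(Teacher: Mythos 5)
Your proposal is correct and follows essentially the same route as the paper: both take $C_a=\seminorm{\omega,\infty}{a}^2$, verify $\dupr[\big]{\seminorm{\omega,\infty}{a}^2\,\omega-(a\acts\omega)}{b^*b}\ge 0$ via the $C^*$\=/seminorm bound (using commutativity), invoke regularity to upgrade this algebraic positivity to the order inequality $a\acts\omega\le\seminorm{\omega,\infty}{a}^2\,\omega$ in $\Omega^+_\Hermitian$, and then apply Proposition~\ref{proposition:extremal} to $\Bounded_\omega(\A,\Omega)$. Your write-up just makes explicit the steps the paper leaves implicit (membership of the difference functional in $\Omega_\Hermitian$ and the role of $\A^{++}_\Hermitian$ versus $\A^+_\Hermitian$).
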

\begin{proof}
  This is essentially the argument given in \cite{reptheoforposfun}. In our case it is
  sufficient to check that the unital $^*$-subalgebra $\Bounded_\omega(\A,\Omega)$ of $\A$ fulfils 
  the condition of Proposition~\ref{proposition:extremal}, which is clear:
  For every $a\in \Bounded_\omega(\A,\Omega)$ the inequality $a\acts \omega \le \seminorm{\omega,\infty}{a}^2 \,\omega$
  is fulfilled, because 
  \begin{equation*}
    \dupr[\big]{\seminorm{\omega,\infty}{a}^2\,\omega-(a\acts \omega)}{b^*b} 
    =
    \seminorm{\omega,\infty}{a}^2 \dupr{\omega}{b^*b} - \dupr{b\acts \omega}{a^2}
    \ge 0
  \end{equation*}
  holds for all $b\in\A$ and because $(\A,\Omega)$ was assumed to be regular.
\end{proof}
As an immediate consequence of the above Proposition~\ref{proposition:v1} and 
Proposition~\ref{proposition:charactersAreExtreme} we get:
\begin{theorem} \label{theorem:bounded}
  Let $(\A,\Omega)$ be a commutative regular abstract $O^*$\=/algebra.
  If $\Bounded(\A,\Omega)$ is weakly dense in $\A$, then $\PureStates(\A,\Omega) = \Characters(\A,\Omega)$.
\end{theorem}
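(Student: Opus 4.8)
The plan is to combine the two already-established ingredients: Proposition~\ref{proposition:charactersAreExtreme}, which gives the inclusion $\PureStates(\A,\Omega) \supseteq \Characters(\A,\Omega)$ for free, and Proposition~\ref{proposition:v1}, which tells us that any pure state $\omega$ is multiplicative on its subalgebra $\Bounded_\omega(\A,\Omega)$ of bounded elements. The remaining task is therefore only the reverse inclusion: to show that a pure state $\omega$ is in fact multiplicative on all of $\A$, not merely on the bounded elements. This is where the density hypothesis must do its work.

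First I would fix $\omega\in\PureStates(\A,\Omega)$. By Proposition~\ref{proposition:v1}, $\omega$ restricted to $\Bounded_\omega(\A,\Omega)$ is multiplicative, and since $\Bounded(\A,\Omega)\subseteq \Bounded_\omega(\A,\Omega)$, in particular $\omega$ is multiplicative on $\Bounded(\A,\Omega)$. Now I would invoke Lemma~\ref{lemma:varquadrat}: to conclude that $\omega$ is multiplicative on $\A$ it suffices to prove $\Var_\omega(a^2)=0$ for every $a\in\A_\Hermitian$ — or, more directly, one can aim to show $\Var_\omega(a)=0$ for all $a\in\A$ and appeal to Lemma~\ref{lemma:var}. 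The natural route is a continuity/density argument: the weak topology is defined by the seminorms $\seminorm{\omega',\wk}{\argument}=\abs{\dupr{\omega'}{\argument}}$, and for a fixed state the maps $a\mapsto\dupr{\omega}{a}$ and $a\mapsto\dupr{\omega}{a^*a}$ are exactly the data entering $\Var_\omega$. The idea is that multiplicativity, being a closed condition expressible through the linear functional $\omega$ and the bimodule-twisted functionals $b\cdot\omega\cdot c$ (all of which lie in $\Omega$, hence are weakly continuous), is preserved under weak limits.

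More concretely, I would argue as follows. Multiplicativity of $\omega$ is equivalent to the vanishing of the bilinear defect $D(a,b):=\dupr{\omega}{ab}-\dupr{\omega}{a}\dupr{\omega}{b}$ for all $a,b\in\A$. For fixed $b$, the map $a\mapsto \dupr{\omega}{ab}=\dupr{\omega\cdot b}{a}$ is weakly continuous since $\omega\cdot b\in\Omega$, and $a\mapsto\dupr{\omega}{a}\dupr{\omega}{b}$ is weakly continuous as a scalar multiple of $\seminorm{\omega,\wk}{\argument}$-type data; hence $a\mapsto D(a,b)$ is weakly continuous. Since $\Bounded(\A,\Omega)$ is weakly dense in $\A$ and $D(a,b)=0$ whenever $a,b\in\Bounded(\A,\Omega)$ (where $\omega$ is multiplicative), I would first let $a$ range over a weakly convergent net in $\Bounded(\A,\Omega)$ with $b\in\Bounded(\A,\Omega)$ fixed to get $D(a,b)=0$ for all $a\in\A$ and $b\in\Bounded(\A,\Omega)$; then, symmetrically, for fixed arbitrary $a$ the map $b\mapsto D(a,b)$ is likewise weakly continuous, so a second density pass extends the identity to all $b\in\A$. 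Thus $D\equiv 0$ and $\omega$ is a character.

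The main obstacle I expect is the careful bookkeeping of which functionals are genuinely weakly continuous: the argument hinges on $\omega\cdot b$ and $b\cdot\omega$ lying in $\Omega$ (guaranteed by the $\A$-bimodule structure of $\Omega$ in Definition~\ref{definition:abstractOstaralgebra}), so that the relevant linear maps are continuous for the topology generated by $\set{\seminorm{\omega',\wk}{\argument}}{\omega'\in\Omega}$. The nonlinear term $\dupr{\omega}{a}\dupr{\omega}{b}$ is only jointly continuous, not uniformly so, which is why one must freeze one argument at a time and perform two successive one-variable density passes rather than a single joint limit. Once that is handled, weak density of $\Bounded(\A,\Omega)$ closes the gap and the two inclusions together give $\PureStates(\A,\Omega)=\Characters(\A,\Omega)$.
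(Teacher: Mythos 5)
Your proposal is correct and takes essentially the same route as the paper: both inclusions come from Propositions~\ref{proposition:charactersAreExtreme} and~\ref{proposition:v1}, and the passage from $\Bounded(\A,\Omega)$ to all of $\A$ is by weak continuity of the relevant functionals (using that $\Omega$ is an $\A$-subbimodule) combined with weak density. The only cosmetic difference is that the paper needs a single density pass rather than your two: by Lemma~\ref{lemma:var}, $\Var_\omega(a)=0$ for $a\in\Bounded(\A,\Omega)$ already yields $\dupr{\omega}{ba}=\dupr{\omega}{b}\dupr{\omega}{a}$ for \emph{all} $b\in\A$, so one can fix an arbitrary $b$ from the outset and only let $a$ run through the density argument.
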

\begin{proof}
  It is only left to check that an $\omega\in \PureStates(\A,\Omega)$, that
  is multiplicative on $\Bounded(\A,\Omega)$, is also multiplicative on $\A$. 
  This is true because 
  $\A \ni a \mapsto \dupr{\omega}{ba} - \dupr{\omega}{b}\dupr{\omega}{a} = 
  \dupr{\omega\cdot b}{a} - \dupr{\omega}{b}\dupr{\omega}{a} \in \CC$
  is weakly continuous for all $b\in\A$ and vanishes on $\Bounded(\A,\Omega)$ by Lemma~\ref{lemma:var},
  because $\Var_\omega(a) = 0$ for all $a\in \Bounded(\A,\Omega)$.
\end{proof}
\begin{example} \label{example:1}
  Let $\A$ be a locally convex $^*$\=/algebra, i.e. a $^*$\=/algebra that carries a locally convex
  topology that makes the $^*$-involution as well as the operators of left- and 
  right-multiplication on $\A$ with fixed elements of $\A$ continuous.
  Define $P^+_\Hermitian$ as the set of all continuous algebraically positive 
  linear functionals on $\A$ and construct the abstract $O^*$\=/algebra
  $(\A,\Omega)$ like in Proposition~\ref{proposition:construction}. 
  Then $(\A,\Omega)$ is regular, $\Omega^+_\Hermitian = P^+_\Hermitian$ and
  $\A^+_\Hermitian$ is the closure in $\A_\Hermitian$ of $\A^{++}_\Hermitian$ 
  with respect to the topology of $\A$.

  If $\A$ is even an lmc $^*$\=/algebra, i.e. if the topology of $\A$ can be defined
  by an upwards directed set of submultiplicative seminorms, then $\Bounded(\A,\Omega) = \Bounded_\omega(\A,\Omega) = \A$ for all 
  $\omega\in\States(\A,\Omega)$ and thus $\PureStates(\A,\Omega) = \Characters(\A,\Omega)$
  by the above Theorem~\ref{theorem:bounded} if $\A$ is commutative.
\end{example}
\begin{proof}
  By Proposition~\ref{proposition:construction}, $(\A,\Omega)$ is indeed an abstract $O^*$\=/algebra
  with $\Omega^+_\Hermitian \supseteq P^+_\Hermitian$, and $\Omega^+_\Hermitian \subseteq P^+_\Hermitian$
  holds because every $\omega \in \Omega^+_\Hermitian$ is continuous by construction and 
  algebraically positive because $\A^{++}_\Hermitian \subseteq \A^+_\Hermitian$. Given any
  $a\in \A_\Hermitian$ that is not in the closure of $\A^{++}_\Hermitian$, then 
  by the Hahn-Banach theorem, there
  exists a continuous (real) linear functional $\tilde{\omega}\colon \A_\Hermitian \to \RR$
  such that $\dupr{\tilde{\omega}}{b} \ge \dupr{\tilde{\omega}}{a} +1$ holds for all
  $b\in \A^{++}_\Hermitian$ because the closure of $\A^{++}_\Hermitian$ in $\A_\Hermitian$ is convex, 
  (again, see e.g. \cite{koethe}, §20.7 (2)) and even
  $\dupr{\tilde{\omega}}{b} \ge 0 \ge \dupr{\tilde{\omega}}{a} +1$ for all
  $b\in \A^{++}_\Hermitian$ because $\A^{++}_\Hermitian$ is a cone. So $\tilde{\omega}$
  extends to a continuous (complex) linear functional $\omega \in \Omega^+_\Hermitian$
  and $\dupr{\omega}{a} < 0$ shows that $a\notin \A^+_\Hermitian$. 
  Conversely, $\A^+_\Hermitian$ is the intersection of the preimages of the closed interval $[0,\infty[\,$
  under all the continuous $\omega \in \Omega^+_\Hermitian$ and hence is closed.
  Thus the closure of $\A^{++}_\Hermitian$ in $\A_\Hermitian$ with respect to the topology of $\A$
  is $\A^+_\Hermitian$. Therefore $\A^{++}_\Hermitian$ is especially weakly dense in $\A^+_\Hermitian$,
  and $(\A,\Omega)$ is regular by Proposition~\ref{proposition:regular}.
  
  Given $\omega \in \States(\A,\Omega)$, then
  $\dupr{b\acts \omega}{a^*a} \le \dupr{b\acts \omega}{(a^*a)^n}^{1/n}$
  for all $n\in \NN$ and all $a,b\in \A$ by Lemma~\ref{lemma:growth}. If 
  $\A$ is even an lmc $^*$\=/algebra, then there exists a continuous submultiplicative
  seminorm $\seminorm{}{\argument}$ on $\A$ fulfilling
  $\dupr{b\acts \omega}{a^*a} \le C^{1/n}\seminorm{}{b}^{1/n}\seminorm{}{a^*a}\seminorm{}{b^*}^{1/n} \xrightarrow{n\to \infty} \seminorm{}{a^*a}$
  for all $a,b\in \A$ with some $C\in [0,\infty[\,$. So $\seminorm{\omega,\infty}{a} \le \seminorm{}{a^*a}^{1/2} < \infty$ and $\omega$ is
  bounded for all $a\in \A$. This shows $\Bounded(\A,\Omega) = \Bounded_\omega(\A,\Omega) = \A$ for all
  $\omega \in \States(\A,\Omega)$.
  If $\A$ is commutative we can thus apply Theorem~\ref{theorem:bounded}.
\end{proof}
\section{Stieltjes states} \label{sec:unbounded}
In order to treat cases where no a priori boundedness assumptions can be made,
we will have to assume that most algebra elements are at least somehow
dominated by essentially self-adjoint ones:
\begin{definition}
  Let $\A$ be a quasi-ordered $^*$\=/algebra. An element $q\in \A^+_\Hermitian$
  is called \neu{coercive} if there exists an $\epsilon>0$ such that 
  $q\gtrsim \epsilon \Unit$. Let $Q\subseteq \A_\Hermitian$ be a non-empty set
  of pairwise commuting elements and such that $q^2$ is coercive and $\lambda q \in Q$
  as well as $qr \in Q$ hold for all $q,r\in Q$ and all $\lambda\in[1,\infty[\,$;
  such a set will be called \neu{dominant}. Then define
  \begin{equation*}
    Q^\downarrow := \set[\big]{a\in\A}{\forall_{q\in Q} \exists_{r,s\in Q} : a^*q^2 a \lesssim r^2\text{ and } a\,q^2a^*\lesssim s^2}\,.
  \end{equation*}
\end{definition}
Note that this especially implies that for every $a\in Q^\downarrow$ there exists an $r\in Q$
such that $a^*a \lesssim r^2$ holds, and if $aq=qa$ and $aa^*=a^*a$ hold for all $q\in Q$,
especially if $\A$ is commutative, then $a^*a \lesssim r^2$ is even sufficient
for an $a\in \A$ to be in $Q^\downarrow$.
\begin{lemma} \label{lemma:coercive1}
  Let $\A$ be a quasi-ordered $^*$\=/algebra and $q,r \in \A_\Hermitian$ commuting 
  elements with the property that $q^2$ and $r^2$ are coercive.
  Then $\lambda q^2 r^2$ is coercive for all $\lambda \in \,\,]0,\infty[\,$ and there
  exists a $\lambda\in [1,\infty[\,$ such that $q^2+r^2 \lesssim \lambda q^2 r^2$ holds.
\end{lemma}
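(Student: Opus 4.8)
The plan is to extract the coercivity constants, transport them through conjugation by $q$ and $r$, and then assemble both claims. First I would record that coercivity supplies $\epsilon,\delta>0$ with $q^2\gtrsim\epsilon\Unit$ and $r^2\gtrsim\delta\Unit$. Two elementary consequences of the order axioms are used repeatedly: the order is preserved under multiplication by a non-negative real $\mu$ (apply the conjugation axiom with $d=\sqrt{\mu}\,\Unit$, so that $d^*a\,d=\mu a$), and inequalities may be added (if $a\lesssim b$ and $a'\lesssim b'$, then $a+a'\lesssim b+a'\lesssim b+b'$ by the additivity axiom and transitivity). I would also note that $q^2r^2=(qr)^*(qr)\in\A^{++}_\Hermitian\subseteq\A^+_\Hermitian$, so $q^2r^2\gtrsim 0$, using that $q,r$ commute and are Hermitian.

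The key step is to conjugate the coercivity bounds. Applying $d^*a\,d\lesssim d^*b\,d$ with $d=q$ to $r^2\gtrsim\delta\Unit$ gives $q\,r^2q\gtrsim\delta\,q^2$; since $q$ and $r$ commute, $q\,r^2q=q^2r^2$, hence $q^2r^2\gtrsim\delta\,q^2$. Symmetrically, conjugating $q^2\gtrsim\epsilon\Unit$ by $r$ yields $q^2r^2\gtrsim\epsilon\,r^2$. These two inequalities carry all the content.

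For coercivity of $q^2r^2$, I would chain $q^2r^2\gtrsim\delta\,q^2\gtrsim\delta\epsilon\,\Unit$, where the second step multiplies $q^2\gtrsim\epsilon\Unit$ by $\delta>0$ and uses transitivity; thus $q^2r^2$ is coercive. For any $\lambda\in\,]0,\infty[\,$, multiplying by $\lambda$ gives $\lambda q^2r^2\gtrsim\lambda\delta\epsilon\,\Unit$ with $\lambda\delta\epsilon>0$, so $\lambda q^2r^2$ is coercive as well.

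For the domination, I would multiply $q^2r^2\gtrsim\delta q^2$ by $\delta^{-1}$ and $q^2r^2\gtrsim\epsilon r^2$ by $\epsilon^{-1}$ to obtain $q^2\lesssim\delta^{-1}q^2r^2$ and $r^2\lesssim\epsilon^{-1}q^2r^2$, then add them to get $q^2+r^2\lesssim(\delta^{-1}+\epsilon^{-1})\,q^2r^2$. Setting $\lambda:=\max\{1,\delta^{-1}+\epsilon^{-1}\}\in[1,\infty[\,$ and using $q^2r^2\gtrsim 0$ to pass from the constant $\delta^{-1}+\epsilon^{-1}$ up to $\lambda$ (multiply the positive element $q^2r^2$ by the non-negative difference $\lambda-(\delta^{-1}+\epsilon^{-1})$ and add) completes the claim. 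There is no serious obstacle; the only points demanding care are that the raw axioms provide only conjugation and additivity, so scalar-positivity and additivity of inequalities must be derived first, and that the commutativity of $q$ and $r$ is precisely what collapses $q\,r^2q$ to $q^2r^2$.
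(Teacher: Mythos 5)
Your proof is correct and follows essentially the same route as the paper: the key step in both is conjugating the coercivity bound of one square by the other element (from $\delta\Unit \lesssim r^2$ get $\delta q^2 \lesssim q\,r^2 q = q^2r^2$ via commutativity), then combining the two resulting inequalities. The paper merely packages the two conjugations into a single algebraic identity using one common constant $\epsilon\le 2$ (so that $2/\epsilon\ge 1$ automatically), whereas you keep separate constants $\epsilon,\delta$ and adjust with a maximum at the end; the mathematical content is identical.
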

\begin{proof}
  Let $2\ge \epsilon >0$ be given such that $q^2 \gtrsim \epsilon \Unit$ and
  $r^2 \gtrsim \epsilon \Unit$, then
  \begin{equation*}
    (2/\epsilon) \,q^2 r^2 = q\,(r^2/ \epsilon-\Unit)\,q + r\,(q^2 / \epsilon - \Unit)\,r + q^2 + r^2 \gtrsim q^2 + r^2
  \end{equation*}
  holds. So $q^2+r^2 \lesssim \lambda q^2 r^2$ if one chooses $\lambda := 2/\epsilon\ge 1$ 
  and $\lambda q^2 r^2$ is coercive for all $\lambda \in \,\,]0,\infty[\,$ because
  $\lambda q^2 r^2 \gtrsim (\epsilon\lambda / 2) \,(q^2+r^2) \gtrsim \epsilon^2\lambda \Unit$.
\end{proof}
\begin{proposition} \label{proposition:domalgebra}
  Let $\A$ be a quasi-ordered $^*$\=/algebra and $Q\subseteq \A_\Hermitian$
  dominant, then $Q^\downarrow$ is a unital $^*$\=/subalgebra of $\A$, even
  a quasi-ordered $^*$\=/algebra with the order inherited from $\A$,
  and $Q\subseteq Q^\downarrow$.
\end{proposition}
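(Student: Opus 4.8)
The plan is to verify directly that $Q^\downarrow$ is closed under the algebraic operations and contains $\Unit$, then observe that it automatically inherits the quasi-order structure from $\A$ as a $^*$\=/subalgebra. First I would establish $Q \subseteq Q^\downarrow$: given $q_0 \in Q$ and any $q \in Q$, dominance guarantees $q_0 q \in Q$, and since the elements of $Q$ pairwise commute we have $q_0^* q^2 q_0 = (q_0 q)^2$, which is trivially $\lesssim (q_0 q)^2$ with $q_0 q \in Q$; the same works for $q_0 q^2 q_0^*$. To get $\Unit \in Q^\downarrow$, note $\Unit^* q^2 \Unit = q^2 \lesssim q^2$ with $q \in Q$, so the defining condition holds. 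Closure under the $^*$\=/involution is immediate from the symmetric form of the definition: the two conditions $a^* q^2 a \lesssim r^2$ and $a\, q^2 a^* \lesssim s^2$ simply swap roles when passing from $a$ to $a^*$.

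The genuine work is closure under addition and multiplication, and this is where I expect the main obstacle. For a product $ab$ with $a,b \in Q^\downarrow$, fix $q \in Q$; I would first use $b \in Q^\downarrow$ to find $r \in Q$ with $b^* q^2 b \lesssim r^2$, then use $a \in Q^\downarrow$ (applied to the element $r \in Q$) to find $t \in Q$ with $a^* r^2 a \lesssim t^2$. The point is to chain these: from $b^* q^2 b \lesssim r^2$ and the compatibility axiom $d^* x\, d \lesssim d^* y\, d$ of a quasi-ordered $^*$\=/algebra (with $d = a$), one gets $(ab)^* q^2 (ab) = a^*(b^* q^2 b)a \lesssim a^* r^2 a \lesssim t^2$, giving the first half of membership of $ab$; the second half follows symmetrically. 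For addition, given $a,b \in Q^\downarrow$ and $q \in Q$, I would pick $r_a, r_b \in Q$ with $a^* q^2 a \lesssim r_a^2$ and $b^* q^2 b \lesssim r_b^2$. The difficulty is that $(a+b)^* q^2 (a+b)$ expands to $a^* q^2 a + b^* q^2 b + a^* q^2 b + b^* q^2 a$, and the cross terms are not individually dominated; the standard fix is the operator inequality $x^* y + y^* x \lesssim x^* x + y^* y$ applied to $x = qa$, $y = qb$, which yields $(a+b)^* q^2 (a+b) \lesssim 2(a^* q^2 a + b^* q^2 b) \lesssim 2(r_a^2 + r_b^2)$.

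This leaves the task of dominating the sum $2(r_a^2 + r_b^2)$ by a single square $r^2$ with $r \in Q$, which is exactly the content of Lemma~\ref{lemma:coercive1}: since $r_a, r_b \in Q$ commute and $r_a^2, r_b^2$ are coercive (because $Q$ is dominant, every $q^2$ is coercive), there is a $\lambda \in [1,\infty[$ with $r_a^2 + r_b^2 \lesssim \lambda r_a^2 r_b^2$, and then $2(r_a^2 + r_b^2) \lesssim 2\lambda\, r_a^2 r_b^2 = \big(\sqrt{2\lambda}\, r_a r_b\big)^2$. Using dominance once more, $r_a r_b \in Q$ and $\sqrt{2\lambda}\, r_a r_b \in Q$ provided $\sqrt{2\lambda} \ge 1$, which holds since $\lambda \ge 1$; thus $r := \sqrt{2\lambda}\, r_a r_b \in Q$ furnishes the required bound. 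The symmetric estimate with $q^2$ on the right is handled identically, completing membership of $a+b$. Once $Q^\downarrow$ is known to be a unital $^*$\=/subalgebra, it is a $^*$\=/algebra in its own right, and the restriction of $\lesssim$ to $(Q^\downarrow)_\Hermitian$ is still reflexive, transitive, and compatible with the algebraic operations (these properties are inherited pointwise, and $0 \lesssim \Unit$ is unchanged), so $Q^\downarrow$ is a quasi-ordered $^*$\=/algebra. The main obstacle throughout is the additive cross-term control, and its resolution hinges entirely on invoking Lemma~\ref{lemma:coercive1} to collapse a sum of two coercive commuting squares back into a single square inside $Q$.
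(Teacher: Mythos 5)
Your proposal is correct and takes essentially the same route as the paper's proof: the same chaining of dominance conditions for products, the same parallelogram-type bound for sums (your cross-term inequality with $x=qa$, $y=qb$ is exactly the paper's step of adding $(a-b)^*q^2(a-b)\gtrsim 0$), and the same invocation of Lemma~\ref{lemma:coercive1} to collapse $2(r_a^2+r_b^2)$ into a single square $\big(\sqrt{2\lambda}\,r_a r_b\big)^2$ with $\sqrt{2\lambda}\,r_a r_b\in Q$. The only blemish is notational: $(ab)^*q^2(ab)=b^*(a^*q^2a)\,b$, not $a^*(b^*q^2b)\,a$, so your chain as written establishes $ba\in Q^\downarrow$; since $a$ and $b$ are arbitrary elements of $Q^\downarrow$, this still yields closure under multiplication.
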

\begin{proof}
  It is immediately clear that $Q^\downarrow$ is stable under the $^*$-involution and under
  multiplication with scalars and that $\Unit \in Q^\downarrow$. Given $a,b\in Q^\downarrow$
  and $q\in Q$, then there exist $r,s,t\in Q$ such that
  $a^*q^2 a \lesssim r^2$ and $b^*q^2 b \lesssim s^2$ as well as $b^*r^2 b \lesssim t^2$ hold, so
  \begin{equation*}
    (a+b)^*q^2(a+b) 
    \lesssim 
    (a+b)^*q^2(a+b) + (a-b)^*q^2(a-b) 
    =
    2a^*q^2a + 2b^*q^2 b
    \lesssim
    2\,(r^2+s^2) 
    \lesssim 
    2\lambda r^2 s^2
  \end{equation*}
  with sufficiently large $\lambda\in[1,\infty[\,$ by the previous Lemma~\ref{lemma:coercive1},
  and $(ab)^*q^2(ab) = b^*a^*q^2a\,b \lesssim b^*r^2b \lesssim t^2$. Of course,
  there are similar estimates for $a$ and $b$ replaced by $a^*$ and $b^*$,
  and thus $a+b \in Q^\downarrow$ and $ab\in Q^\downarrow$.
  This shows that $Q^\downarrow$ is a unital $^*$\=/subalgebra of $\A$ and it is clear that 
  it is even a quasi-ordered $^*$-algebra with the order inherited from $\A$.
  Finally, $Q\subseteq Q^\downarrow$ is an immediate consequence of the closedness of $Q$ under multiplication
  and its commutativity.
\end{proof}
In the special case of $O^*$\=/algebras, this dominated unital $^*$\=/subalgebra $Q^\downarrow$
has a particularly easy interpretation as a $^*$\=/algebra of continuous adjointable endomorphisms:
\begin{proposition} \label{proposition:dominatedchar}
  Let $\Hilb$ be a Hilbert space, $\Dom\subseteq \Hilb$ a dense linear subspace and 
  $Q\subseteq \Adbar(\Dom)_\Hermitian$ dominant. Then $\set{\seminorm{q^2}{\argument}}{q\in Q}$
  is a cofinal subset of the set of all seminorms on $\Dom$ that are continuous
  with respect to $\tau_{Q^\downarrow}$ like in Definition~\ref{definition:Ostar}.
  Moreover, given $a \in \Adbar(\Dom)$, then $a\in Q^\downarrow$ holds if and only if
  $a$ and $a^*$ are both continuous with respect to $\tau_{Q^\downarrow}$.
\end{proposition}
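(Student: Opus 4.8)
The plan is to establish the cofinality claim first, since the characterisation of $Q^\downarrow$ will then follow quickly. Throughout I use that, by Definition~\ref{definition:Ostar}, $\seminorm{a}{\phi}^2 = \skal{\phi}{a\,\phi}$ for $a\in(Q^\downarrow)^+_\Hermitian$ and $\phi\in\Dom$, so in particular $\seminorm{q^2}{\phi}^2 = \skal{\phi}{q^2\phi} = \norm{q\phi}^2$ for $q\in Q$; here $q^2\in(Q^\downarrow)^+_\Hermitian$ since $q\in Q\subseteq Q^\downarrow$ by Proposition~\ref{proposition:domalgebra} and $q^2=q^*q\ge 0$, so each $\seminorm{q^2}{\argument}$ really is among the seminorms defining $\tau_{Q^\downarrow}$. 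First I would note that the family $\set{\seminorm{q^2}{\argument}}{q\in Q}$ is upward directed for pointwise domination: for $q,r\in Q$, Lemma~\ref{lemma:coercive1} gives a $\lambda\in[1,\infty[$ with $q^2+r^2\lesssim\lambda q^2r^2=(\sqrt{\lambda}\,qr)^2$, and $\sqrt{\lambda}\,qr\in Q$ by dominance, whence $\seminorm{q^2}{\argument}\le\seminorm{(\sqrt{\lambda}qr)^2}{\argument}$ and likewise for $r$.

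The crux is to dominate an arbitrary defining seminorm $\seminorm{a}{\argument}$, $a\in(Q^\downarrow)^+_\Hermitian$, by some $\seminorm{q^2}{\argument}$. As observed after the definition of a dominant set, membership $a\in Q^\downarrow$ yields an $r\in Q$ with $a^*a=a^2\lesssim r^2$, that is $\norm{a\phi}\le\norm{r\phi}$; fixing any $q_0\in Q$, coercivity of $q_0^2$ gives an $\epsilon>0$, without loss of generality $\epsilon\le 1$, with $\Unit\lesssim\epsilon^{-1}q_0^2$, hence $\norm{\phi}\le\epsilon^{-1/2}\norm{q_0\phi}$. Since $a\ge 0$, the Cauchy--Schwarz inequality in $\Hilb$ gives $\skal{\phi}{a\phi}\le\norm{\phi}\,\norm{a\phi}\le\epsilon^{-1/2}\norm{q_0\phi}\,\norm{r\phi}=\epsilon^{-1/2}\seminorm{q_0^2}{\phi}\,\seminorm{r^2}{\phi}$, and choosing by directedness an $s\in Q$ with $\seminorm{q_0^2}{\argument},\seminorm{r^2}{\argument}\le\seminorm{s^2}{\argument}$ gives $\seminorm{a}{\phi}\le\epsilon^{-1/4}\seminorm{s^2}{\phi}$. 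The constant is absorbed by scaling, replacing $s$ by $\epsilon^{-1/4}s\in Q$ (legitimate since $\epsilon^{-1/4}\ge 1$ and $Q$ is stable under multiples in $[1,\infty[$), which yields genuine pointwise domination $\seminorm{a}{\argument}\le\seminorm{(\epsilon^{-1/4}s)^2}{\argument}$. As every $\tau_{Q^\downarrow}$-continuous seminorm is bounded by a constant times a finite maximum of defining seminorms, a last appeal to directedness (and scaling) upgrades this to cofinality of $\set{\seminorm{q^2}{\argument}}{q\in Q}$.

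For the second part, the cofinality just shown lets me test $\tau_{Q^\downarrow}$-continuity of a linear endomorphism of $\Dom$ on the seminorms $\seminorm{q^2}{\argument}$ alone. If $a\in Q^\downarrow$, then for each $q\in Q$ the definition of $Q^\downarrow$ supplies $r\in Q$ with $a^*q^2a\lesssim r^2$, and $\seminorm{q^2}{a\phi}^2=\skal{a\phi}{q^2a\phi}=\skal{\phi}{a^*q^2a\,\phi}\le\seminorm{r^2}{\phi}^2$ shows $a$ is continuous, while $a\,q^2a^*\lesssim s^2$ does the same for $a^*$. Conversely, if $a\in\Adbar(\Dom)$ with $a$ and $a^*$ both $\tau_{Q^\downarrow}$-continuous, then for each $q\in Q$ the seminorm $\phi\mapsto\seminorm{q^2}{a\phi}$ is continuous, so cofinality provides an $r\in Q$ with $\seminorm{q^2}{a\phi}\le\seminorm{r^2}{\phi}$; squaring reads $a^*q^2a\lesssim r^2$, and the analogous argument for $a^*$ gives $a\,q^2a^*\lesssim s^2$, so $a\in Q^\downarrow$. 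I expect the cofinality step to be the main obstacle, specifically the passage from the quadratic bound $a^2\lesssim r^2$ available for positive $a\in Q^\downarrow$ to linear control of $\skal{\phi}{a\phi}$; this is precisely where coercivity of some $q_0^2$ is indispensable, since it alone allows $\Unit$, and thus $\norm{\phi}$, to be dominated by a seminorm coming from $Q$.
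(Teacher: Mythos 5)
Your proof is correct, and its overall architecture --- establish cofinality first, then read off the characterisation of $Q^\downarrow$ --- matches the paper's; in fact your second half (continuity of $a$ and $a^*$ for $a\in Q^\downarrow$ from $a^*q^2a\lesssim r^2$ and $a\,q^2a^*\lesssim s^2$, and the converse by squaring $\seminorm{q^2}{a\phi}\le\seminorm{r^2}{\phi}$ into $a^*q^2a\le r^2$) is essentially the paper's argument verbatim. Where you genuinely diverge is the crux of cofinality, i.e.\ dominating $\seminorm{a}{\argument}$ for $a\in(Q^\downarrow)^+_\Hermitian$ by a single $\seminorm{q^2}{\argument}$. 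You pass from the quadratic bound $a^2\lesssim r^2$ to linear control of $\skal{\phi}{a\,\phi}$ analytically: Cauchy--Schwarz in $\Hilb$ gives $\skal{\phi}{a\,\phi}\le\norm{\phi}\,\norm{a\phi}$, coercivity of some $q_0^2$ controls $\norm{\phi}$, and then you need directedness of $\set{\seminorm{q^2}{\argument}}{q\in Q}$ (via Lemma~\ref{lemma:coercive1}) plus two scaling steps to absorb the constants. The paper instead makes this passage purely algebraically: since $a^2+a+\Unit\ge 0$ one has $a\le(a+\Unit)^2$, and applying the quadratic bound to $a+\Unit\in Q^\downarrow$ rather than to $a$ yields $a\le(a+\Unit)^2\le q^2$ for some $q\in Q$, hence $\seminorm{a}{\argument}\le\seminorm{q^2}{\argument}$ in one stroke, with no constants and no directedness argument. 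This also shows that your closing diagnosis is slightly off: coercivity is not what bridges the quadratic-to-linear gap (both proofs use coercivity only inside the remark that every $b\in Q^\downarrow$ satisfies $b^*b\lesssim r^2$ for some $r\in Q$); the shift $a\mapsto a+\Unit$ suffices. Your route costs more bookkeeping but makes the upward directedness of the cofinal family explicit, which is a mildly useful by-product; both arguments are rigorous.
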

\begin{proof}
  As $Q\subseteq Q^\downarrow$ by the previous Proposition~\ref{proposition:domalgebra},
  it is clear that all $\seminorm{q^2}{\argument}$ are $\tau_{Q^\downarrow}$\=/continuous. 
  Conversely, the set $\set{\seminorm{a}{\argument}}{a\in (Q^\downarrow)^+_\Hermitian}$
  defines the $\tau_{Q^\downarrow}$-topology and is upwards directed and closed under
  multiplication with non-negative scalars because $(Q^\downarrow)^+_\Hermitian$ is.
  As for all $a \in (Q^\downarrow)^+_\Hermitian$ there exists
  a $q\in Q$ such that $a \le (a+\Unit)^2 \le q^2$, hence 
  $\seminorm{a}{\argument} \le \seminorm{q^2}{\argument}$,
  this shows that $\set{\seminorm{q^2}{\argument}}{q\in Q}$ is a cofinal subset of the 
  $\tau_{Q^\downarrow}$\=/continuous seminorms on $\Dom$.
  
  If $a\in Q^\downarrow$, then $a$ is certainly $\tau_{Q^\downarrow}$\=/continuous, because
  $\seminorm{b}{a\phi} = \seminorm{a^*ba}{\phi}$ holds for all $b\in (Q^\downarrow)^+_\Hermitian$
  and all $\phi\in\Dom$. It also follows that $a^*$ is $\tau_{Q^\downarrow}$\=/continuous
  because $a^*\in Q^\downarrow$ as well.
  Conversely, given an $a\in \Adbar(\Dom)$ such that $a$ is $\tau_{Q^\downarrow}$\=/continuous,
  then for every $q\in Q$ there exists an $r\in Q$ such that
  $\seminorm{q^2}{a\phi} \le \seminorm{r^2}{\phi}$ holds for all $\phi\in \Dom$,
  hence $a^*q^2 a \le r^2$. If $a^*$ is $\tau_{Q^\downarrow}$\=/continuous as well, then
  there also exists an $s\in Q$
  such that $a\,q^2a^* \le s^2$ and we conclude that $a\in Q^\downarrow$.
\end{proof}
If $Q^\downarrow \subseteq \Adbar(\Dom)$ is a closed $O^*$\=/algebra
and every $q^2$ with $q\in Q$ is essentially self-adjoint, then $Q^\downarrow$
is especially well-behaved. Such a $Q^\downarrow$ would
be an example of a \neu{strictly self-adjoint} $O^*$\=/algebra in the language
of \cite{schmuedgen}, Definition~7.3.6:
\begin{lemma} \label{lemma:restrict}
  Let $\Hilb$ be a Hilbert space, $\Dom\subseteq \Hilb$ a dense linear subspace
  and $Q\subseteq \Adbar(\Dom)$ a dominant set with the properties that
  $Q^\downarrow \subseteq \Adbar(\Dom)$ is a closed $O^*$\=/algebra on $\Dom$
  and that $q^2$ is essentially self-adjoint for every $q\in Q$.
  Then $\Dom = \bigcap_{q\in Q} \Dom[(q^2)^\dagger]$ and every bounded operator 
  $B \in \Adbar(\Hilb)$ that fulfils $\skal{B\phi}{q \psi} = \skal{B q \phi}{\psi}$
  and $\skal{B^*\phi}{q\psi} = \skal{B^* q \phi}{\psi}$
  for all $\phi,\psi \in \Dom$ and all $q\in Q$ can be restricted to some
  $b \in Q^\downarrow$ which commutes with all $q\in Q$. As a
  special case, every $q\in Q$ has a bounded inverse $q^{-1} \in Q^\downarrow$.
\end{lemma}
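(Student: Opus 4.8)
The plan is to reduce everything to the domain identity $\Dom=\bigcap_{q\in Q}\Dom[(q^2)^\dagger]$, then to show that any $B$ as in the hypothesis maps $\Dom$ into itself, and finally to obtain the inverse of $q$ as the special case $B=(q^\cl)^{-1}$.

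For the domain identity, the inclusion $\Dom\subseteq\bigcap_{q}\Dom[(q^2)^\dagger]$ is immediate because each $q^2$ is a Hermitian endomorphism of $\Dom$, hence symmetric as an operator. For the reverse inclusion I would use that $Q^\downarrow$ is closed, so that $\Dom$ agrees with its completion $\bigcap_{a\in Q^\downarrow}\Dom[a^\cl]$ under $\tau_{Q^\downarrow}$. By the cofinality of $\set{\seminorm{q^2}{\argument}}{q\in Q}$ from Proposition~\ref{proposition:dominatedchar} and the identity $\seminorm{q^2}{\phi}=\seminorm{}{q\phi}$, this completion reduces to $\bigcap_{q\in Q}\Dom[q^\cl]$. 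Since $q^2\in Q$ for all $q\in Q$ (closedness of $Q$ under products) and since the elementary estimate $\seminorm{}{q\chi}^2=\skal{\chi}{q^2\chi}\le\seminorm{}{\chi}\,\seminorm{}{q^2\chi}$ for $\chi\in\Dom$ shows $\Dom[(q^2)^\cl]\subseteq\Dom[q^\cl]$, one gets $\bigcap_{q}\Dom[q^\cl]=\bigcap_{q}\Dom[(q^2)^\cl]$; essential self-adjointness of each $q^2$ finally replaces $(q^2)^\cl$ by $(q^2)^\dagger$.

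For the main assertion, the hypotheses on $B$ say exactly that $\psi\mapsto\skal{B\phi}{q\psi}=\skal{Bq\phi}{\psi}$ is $\seminorm{}{\argument}$-continuous, i.e. $B\phi\in\Dom[q^\dagger]$ with $q^\dagger B\phi=Bq\phi$, and likewise for $B^*$. Applying this twice (to $\phi$ and to $q\phi\in\Dom$) yields $B\phi\in\Dom[(q^\dagger)^2]$ with $(q^\dagger)^2B\phi=Bq^2\phi$, and a short computation gives $\Dom[(q^\dagger)^2]\subseteq\Dom[(q^2)^\dagger]$; hence $B\phi\in\bigcap_{q}\Dom[(q^2)^\dagger]=\Dom$, and symmetrically $B^*\phi\in\Dom$. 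Thus $B$ and $B^*$ restrict to endomorphisms $b,b^*$ of $\Dom$ with $\skal{\phi}{b\psi}=\skal{b^*\phi}{\psi}$, so $b\in\Adbar(\Dom)$; the relation $q^\dagger B\phi=Bq\phi$ together with $B\phi\in\Dom$ gives $qb=bq$ (and $qb^*=b^*q$) on $\Dom$. Finally, boundedness of $B$ yields $\skal{\phi}{b^*q^2b\phi}=\seminorm{}{qb\phi}^2=\seminorm{}{Bq\phi}^2\le\norm{B}^2\seminorm{}{q\phi}^2=\skal{\phi}{\norm{B}^2q^2\phi}$, so $b^*q^2b\le\norm{B}^2q^2\le r^2$ with $r:=\lambda q\in Q$ for $\lambda:=\max\{\norm{B},1\}\ge1$, and symmetrically $bq^2b^*\le s^2$ with $s=\lambda q$; therefore $b\in Q^\downarrow$.

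For the inverse, I would first show that each $q\in Q$ is essentially self-adjoint: if $q^\dagger\eta=\pm\I\eta$, then $\eta\in\Dom[(q^\dagger)^2]\subseteq\Dom[(q^2)^\dagger]$ with $(q^2)^\dagger\eta=(q^\dagger)^2\eta=-\eta$, which contradicts $(q^2)^\cl\ge\epsilon\Unit$ (the coercivity passes to the closure) unless $\eta=0$, so both deficiency spaces vanish. Then $q^\cl$ is self-adjoint with $(q^\cl)^2=(q^2)^\cl\ge\epsilon\Unit$, whence $0\notin\mathrm{spec}(q^\cl)$ and $B:=(q^\cl)^{-1}$ is bounded and self-adjoint with $\norm{B}\le\epsilon^{-1/2}$. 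A direct check using self-adjointness of $q^\cl$ and $q\psi=q^\cl\psi$ on $\Dom$ gives $\skal{B\phi}{q\psi}=\skal{\phi}{\psi}=\skal{Bq\phi}{\psi}$, so the previous part applies and the restriction $b\in Q^\downarrow$ satisfies $bq=qb=\Unit$ on $\Dom$, i.e. $b=q^{-1}$. The main obstacle is the domain identity, where closedness of $Q^\downarrow$, the cofinality of Proposition~\ref{proposition:dominatedchar} and the essential self-adjointness of the $q^2$ must be combined; once it is in place, the remaining steps are routine operator-theoretic computations.
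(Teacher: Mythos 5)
Your treatment of the domain identity and of the restriction statement (your first two parts) is correct and essentially the paper's own argument: closedness of $Q^\downarrow$ plus cofinality of the seminorms $\seminorm{q^2}{\argument}$ plus essential self-adjointness of the $q^2$ give $\Dom=\bigcap_{q\in Q}\Dom[(q^2)^\dagger]$, and the weak commutation relations force $B\phi, B^*\phi\in\Dom$, after which adjointability, commutation and the estimate $b^*q^2b\le\norm{B}^2q^2\le(\lambda q)^2$ yield $b\in Q^\downarrow$. (The paper shortcuts your $(q^\dagger)^2$-step by using $q^2\in Q$ directly, but that is immaterial.)

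The gap is in the special case. Your candidate $B:=(q^\cl)^{-1}$ is legitimate, and your deficiency-space proof that each $q\in Q$ is essentially self-adjoint is correct; but to invoke your second part you must verify the hypothesis $\skal{B\phi}{r\psi}=\skal{Br\phi}{\psi}$ (and likewise for $B^*=B$) for \emph{every} $r\in Q$, not only for $r=q$: it is exactly the validity of this relation for all $r\in Q$ that gives $B\phi\in\bigcap_{r\in Q}\Dom[(r^2)^\dagger]=\Dom$. Your ``direct check'' only treats $r=q$, and for the other elements of $Q$ this is not a routine computation: pairwise commutativity of $Q$ is only commutation of unbounded symmetric operators on the common invariant domain $\Dom$, and by Nelson's classical counterexample such commutation does \emph{not} imply that a bounded inverse (or resolvent, or spectral projection) of $q^\cl$ commutes, even weakly, with $r$. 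The relation does hold here, but its proof requires the hypothesis on products: since $rq\in Q$, the operator $r^2q^2=(rq)^2$ is coercive and essentially self-adjoint, hence has dense image in $\Hilb$, and from this one deduces that the image of $q^2$ (hence of $q$) is dense in $\Dom$ with respect to each graph norm $\seminorm{\Unit+r^2}{\argument}$; the commutation relation, which is elementary for $\phi$ in the image of $q$ (there $Bq\chi=\chi\in\Dom$ and $rq=qr$ make both sides equal to $\skal{r\chi}{\psi}$), then extends to all $\phi\in\Dom$ by this density, since $\phi\mapsto\skal{Br\phi}{\psi}$ is $\seminorm{\Unit+r^2}{\argument}$\-/continuous. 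This density argument is the core of the paper's proof of the special case (and is where the assumption that \emph{all} squares of elements of $Q$, including products, are essentially self-adjoint enters); without it, your assertion that ``the previous part applies'' is unsupported.
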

\begin{proof}
  This argument is similar to the proof of Proposition 7.2.2 in \cite{schmuedgen}:
  It is clear that $\Dom \subseteq \bigcap_{q\in Q} \Dom[(q^2)^\dagger]$. 
  Conversely, $\Dom[(q^2)^\dagger] = \Dom[(q^2)^\cl]$ for every $q\in Q$
  because $q^2$ is essentially self-adjoint and 
  $\Dom = \bigcap_{a\in Q^\downarrow} \Dom[a^\cl]$ because $Q^\downarrow$
  is a closed $O^*$-algebra. 
  Given $a\in Q^\downarrow$, then there exists a $q\in Q$ with $q^2 \gtrsim \Unit$ 
  such that $a^*a \lesssim q^2$, hence also $\Unit+a^*a \lesssim \Unit+q^4$ by
  using that $q^4 = q\,(q^2 -\Unit)\,q + q^2 \gtrsim q^2$. So 
  $\seminorm{\Unit+a^*a}{\argument}$ on $\Dom$ is dominated by 
  $\seminorm{\Unit+q^4}{\argument}$, hence $\Dom[a^\cl] \supseteq \Dom[(q^2)^\cl]$ and
  $\Dom = \bigcap_{a\in Q^\downarrow} \Dom[a^\cl] \supseteq 
  \bigcap_{q\in Q} \Dom[(q^2)^\cl] = \bigcap_{q\in Q} \Dom[(q^2)^\dagger]$.
  
  Now let $B \in \Adbar(\Hilb)$ be given that fulfils $\skal{B\phi}{q \psi} = \skal{B q \phi}{\psi}$
  and $\skal{B^*\phi}{q\psi} = \skal{B^* q \phi}{\psi}$
  for all $\phi,\psi \in \Dom$ and all $q\in Q$. Then 
  $\Dom \ni \psi \mapsto \skal{B \phi}{q^2 \psi} = \skal{Bq^2\phi}{\psi} \in \CC$
  is for all $\phi \in \Dom$ and all $q\in Q$ a $\seminorm{}{\argument}$\=/continuous linear functional,
  hence $B\phi \in \Dom[(q^2)^\dagger]$ and even $B\phi \in \bigcap_{q\in Q} \Dom[(q^2)^\dagger] = \Dom$.
  So $B$ can be restricted to a linear function $b \colon \Dom \to \Dom$.
  The same argument applied to $B^*$ shows that $b$ is adjointable, i.e. $b\in \Adbar(\Dom)$.
  Clearly, $b$ and $b^*$ commute with all $q\in Q$ and are bounded,
  from which it follows that $b,b^*\in Q^\downarrow$.
  
  For every $q\in Q$ the coercive essentially self-adjoint $q^2$ is injective
  and its image is dense in $\Hilb$ with respect to $\seminorm{}{\argument}$.
  Even more, for every $r \in Q$ the image of $q^2$ is dense in $\Dom$
  with respect to $\seminorm{\Unit+r^2}{\argument}$:
  Let $\psi \in \Dom$ and $\epsilon > 0$ be given.
  As $r^2$ is coercive, the norm $\seminorm{\Unit+r^2}{\argument}$ is equivalent to the norm
  $\seminorm{r^2}{\argument}$, and we can assume without loss of generality that $r^2 \ge \Unit$, hence
  $r^4 = r\,(r^2-\Unit)\,r + r^2 \ge r^2$.
  Being coercive and essentially self-adjoint, $r^2q^2$ has dense image in $\Hilb$
  with respect to $\seminorm{}{\argument}$, and so there exists a $\phi\in \Dom$
  such that $\seminorm{}{r^2 q^2\phi - r^2 \psi} \le \epsilon$ holds, hence 
  $\seminorm{r^2}{q^2\phi - \psi} \le \seminorm{r^4}{q^2\phi - \psi} = \seminorm{}{r^2 q^2\phi - r^2 \psi} \le \epsilon$.
  
  As $q^2$ is coercive, injective and has dense image in $\Hilb$,
  it follows that $q^2$ has a bounded Hermitian (left-)inverse $B \in \Adbar(\Hilb)_\Hermitian$,
  which fulfils 
  \begin{equation*}
    \skal{B \phi}{r q^2 \psi} = \skal{\phi}{B q^2 r \psi} = \skal{\phi}{ r \psi} 
    = \skal{r \phi}{\psi} = \skal{B r \phi}{q^2\psi}
  \end{equation*}
  for all $\phi, \psi \in \Dom$ and all $r\in Q$, hence
  $\skal{B \phi}{r \psi} = \skal{B r \phi}{\psi}$ for all $\phi, \psi \in \Dom$ and all $r\in Q$
  by using that the image of $q^2$ is dense in $\Dom$ with respect to $\seminorm{\Unit+r^2}{\argument}$.
  So $B$ restricts to a left inverse $b \in Q^\downarrow$ of $q^2$, which 
  commutes with $q^2$ and therefore is also a right inverse. Then $q^{-1} := qb \in Q^\downarrow$
  is the inverse of $q$.
\end{proof}
In order to guarantee that all squares of elements of such a dominant set $Q$ are 
essentially self-adjoint, a variant of Nelson's theorem will be helpful:
\begin{definition} \label{definition:stieltjesstate}
  Let $(\A,\Omega)$ be an abstract $O^*$\=/algebra, $\omega\in \States(\A,\Omega)$ 
  and $a\in \A^+_\Hermitian$,
  then $\omega$ is said to be a \neu{Stieltjes state for $a$} if
  \begin{equation*}
    \dupr{b\acts\omega}{a} = 0 \quad\quad\text{or}\quad\quad \sum_{n=1}^\infty \dupr{b\acts\omega}{a^n}^{-\frac{1}{2n}} = \infty
  \end{equation*}
  holds for all $b\in\A$ with $\dupr{\omega}{b^*b} = 1$ (in analogy to the notion
  of Stieltjes vectors, see e.g. \cite{masson.mcclary:1972}).
\end{definition}
Note that Lemma~\ref{lemma:growth} assures that in the above definition,
$\dupr{b\acts\omega}{a} \neq 0$ implies that $\dupr{b\acts\omega}{a^{n}} > 0$
for all $n\in \NN$ and that
$\smash{n\mapsto \dupr{b\acts\omega}{a^{n}}^{-1/(2n)}}$ is non-increasing. If
$\omega$ is a bounded state for $a$, then either $\seminorm{\omega,\infty}{a} = 0$,
in which case $\dupr{b\acts\omega}{a} = 0$, or 
$\dupr{b\acts\omega}{a^n}^{-1/(2n)} \ge \dupr{b\acts\omega}{a^{2n}}^{-1/(4n)}\ge \seminorm{\omega,\infty}{a}^{-1/2}$ for
all $n\in \NN$, so every bounded state for $a$ is also a Stieltjes state. However,
the notion of a Stieltjes state is much less restrictive. For example, 
if $\dupr{b\acts\omega}{a^{n}} \le C_b(2n)^{2n}$ holds for all $n\in \NN$
with an arbitrary $C_b\in[0,\infty[\,$,
which may depend on $b$, then $\omega$ is a Stieltjes state for $a$.
Like for bounded states, this growth condition depends only on $\omega$ and powers of $a$ if $\A$ is commutative:
\begin{proposition}
  Let $(\A,\Omega)$ be a commutative abstract $O^*$\=/algebra, $\omega \in \States(\A,\Omega)$ and
  $a\in\A^+_\Hermitian$. Then $\omega$ is a Stieltjes state for $a$ if and only if
  \begin{equation*}
    \dupr{\omega}{a} = 0 \quad\quad\text{or}\quad\quad \sum_{n=1}^\infty \dupr{\omega}{a^{n}}^{-\frac{1}{2n}} = \infty
  \end{equation*}
  holds.
\end{proposition}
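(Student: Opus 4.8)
The plan is to prove the two implications separately, with essentially all of the content residing in the "if" direction. For the "only if" direction I would simply specialize the defining condition of a Stieltjes state to $b=\Unit$: this is admissible since $\dupr{\omega}{\Unit^*\Unit}=\dupr{\omega}{\Unit}=1$, and because $\dupr{\Unit\acts\omega}{a^n}=\dupr{\omega}{a^n}$ for all $n\in\NN$ the Stieltjes condition collapses immediately to the stated alternative involving only $\omega$ and powers of $a$.

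For the "if" direction I would fix an arbitrary $b\in\A$ with $\dupr{\omega}{b^*b}=1$, set $c:=b^*b\in\A^+_\Hermitian$, and use commutativity to write $\dupr{b\acts\omega}{a^n}=\dupr{\omega}{c\,a^n}$ for all $n\in\NN$. I would then treat the two disjuncts of the hypothesis. If $\dupr{\omega}{a}=0$, Lemma~\ref{lemma:growth} forces $\dupr{\omega}{a^n}=0$ for all $n$, and one application of the Cauchy Schwarz inequality to the positive sesquilinear form $(x,y)\mapsto\dupr{\omega}{x^*y}$ gives $\abs{\dupr{\omega}{c\,a}}^2\le\dupr{\omega}{c^2}\dupr{\omega}{a^2}=0$, so $\dupr{b\acts\omega}{a}=0$ and the required alternative holds for this $b$. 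If instead $\dupr{\omega}{a}\neq 0$, then the hypothesis forces $\sum_{m}\dupr{\omega}{a^m}^{-1/(2m)}=\infty$; assuming also $\dupr{b\acts\omega}{a}\neq 0$ (the opposite case being trivial), the note preceding the statement guarantees $\dupr{b\acts\omega}{a^n}>0$ for all $n$, and the same Cauchy Schwarz estimate yields
\[
  \dupr{b\acts\omega}{a^n}=\dupr{\omega}{c\,a^n}\le\dupr{\omega}{c^2}^{1/2}\,\dupr{\omega}{a^{2n}}^{1/2},
\]
hence $\dupr{b\acts\omega}{a^n}^{-1/(2n)}\ge C^{-1/(2n)}\,\dupr{\omega}{a^{2n}}^{-1/(4n)}$ with the finite positive constant $C:=\dupr{\omega}{c^2}^{1/2}$.

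The decisive step is then to recognize that $\dupr{\omega}{a^{2n}}^{-1/(4n)}$ is exactly the even-indexed term $t_{2n}$ of the original series $\sum_m t_m$ with $t_m:=\dupr{\omega}{a^m}^{-1/(2m)}$, and that Lemma~\ref{lemma:growth} makes $(t_m)_m$ non-increasing. For a non-increasing non-negative sequence the even-indexed subseries inherits divergence from the full series, since $t_{2n}\ge\tfrac12(t_{2n}+t_{2n+1})$ gives $\sum_n t_{2n}\ge\tfrac12\sum_{m\ge 2}t_m=\infty$. Together with $C^{-1/(2n)}\to 1$ (so the factor is eventually bounded below by $\tfrac12$), this forces $\sum_n\dupr{b\acts\omega}{a^n}^{-1/(2n)}=\infty$, establishing the Stieltjes condition for $b$ and hence for all admissible $b$. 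I expect this comparison of the "thinned" even-indexed subseries against the full series — the only place where the monotonicity supplied by Lemma~\ref{lemma:growth} is genuinely essential — to be the crux; everything else reduces to Cauchy Schwarz and routine bookkeeping.
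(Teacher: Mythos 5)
Your proof is correct and follows essentially the same route as the paper's: necessity via $b=\Unit$, and sufficiency by applying the Cauchy Schwarz inequality to bound $\dupr{b\acts\omega}{a^n}$ by $\dupr{\omega}{(b^*b)^2}^{1/2}\dupr{\omega}{a^{2n}}^{1/2}$, then exploiting the monotonicity of $m\mapsto\dupr{\omega}{a^m}^{-1/(2m)}$ from Lemma~\ref{lemma:growth} to show the even-indexed subseries inherits divergence. The only cosmetic differences are that the paper handles the $\dupr{\omega}{a}=0$ case via $\Var_\omega(a)=0$ and Lemma~\ref{lemma:var} rather than a direct Cauchy Schwarz estimate, and bounds the constant factor uniformly by $\dupr{\omega}{(b^*b)^2}^{-1/4}$ (using $\dupr{\omega}{(b^*b)^2}\ge 1$) rather than by an eventual bound as you do.
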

\begin{proof}
  This is clearly necessary, but also sufficient:
  If $\dupr{\omega}{a} = 0$, then $\Var_\omega(a)=0$ by 
  Lemma~\ref{lemma:growth}, so
  $\dupr{b\acts \omega}{a} = \dupr{\omega}{b^*b\,a} = \dupr{\omega}{b^*b}\dupr{\omega}{a} = 0$ for all
  $b\in \A$ by Lemma~\ref{lemma:var}
  and $\omega$ is a Stieltjes state. Otherwise $\dupr{\omega}{a^{n}} > 0$ and
  $\dupr{b\acts\omega}{a^{n}}^{1/(2n)} \le \dupr{\omega}{(b^*b)^2}^{1/(4n)}\dupr{\omega}{a^{2n}}^{1/(4n)}$
  for all $b\in \A$ and all $n\in \NN$ due to the Cauchy Schwarz inequality,
  so either $\dupr{b\acts \omega}{a} = 0$ or
  \begin{equation*}
    \sum_{n=1}^\infty \dupr{b\acts\omega}{a^{n}}^{-\frac{1}{2n}}
    \ge
    \sum_{n=1}^\infty \dupr{\omega}{(b^*b)^2}^{-\frac{1}{4n}} \, \dupr{\omega}{a^{2n}}^{-\frac{1}{4n}}
    \ge
    \frac{\dupr{\omega}{(b^*b)^2}^{-\frac{1}{4}}}{2} \sum_{n=2}^\infty \dupr{\omega}{a^{n}}^{-\frac{1}{2n}}
    =
    \infty
  \end{equation*}
  for all $b\in\A$ with $\dupr{\omega}{b^*b} = 1$ by using that $1 = \dupr{\omega}{b^*b}^2 \le \dupr{\omega}{(b^*b)^2}$ and that
  $\dupr{\omega}{a^{2n}}^{1/(4n)} \le \dupr{\omega}{a^{2n+1}}^{1/(4n+2)}$ for all $n\in \NN$ by
  Lemma~\ref{lemma:growth} again.
\end{proof}
If the states of an abstract $O^*$-algebra are Stieltjes states for sufficiently 
many coercive algebra elements, then we will be able to show that the closed GNS
representations fulfil the conditions of Lemma~\ref{lemma:restrict}:
\begin{lemma} \label{lemma:dominantconstruction}
  Let $\A$ be a quasi-ordered $^*$\=/algebra and $Q'\subseteq \A^+_\Hermitian$
  a non-empty set of coercive and pairwise commuting elements. Then
  \begin{equation*}
    Q := \set[\Big]{\lambda\prod\nolimits_{n=1}^N q_n'}{\lambda\in[1,\infty[\,;\,N\in \NN;\,q_1',\dots,q_N'\in Q'}
  \end{equation*}
  is dominant and $Q'\subseteq Q$.
\end{lemma}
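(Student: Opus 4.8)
The plan is to verify directly that $Q$ satisfies each clause of the definition of a dominant set, treating commutativity, Hermiticity, and the two stability properties as bookkeeping and reserving the real work for the coercivity of squares. First I would dispose of the easy points. The inclusion $Q' \subseteq Q$ (hence also non-emptiness of $Q$) follows by taking $\lambda = 1$ and $N = 1$. Every element $\lambda \prod_{n=1}^N q_n'$ is Hermitian because $\lambda \in [1,\infty[$ is real and a product of pairwise commuting Hermitian elements is Hermitian (induction on $N$ via $(st)^* = t^*s^* = ts = st$), and any two elements of $Q$ commute since all $q_n' \in Q'$ commute and scalars are central. The stability clauses are immediate from the shape of the elements: $\lambda\,(\mu \prod_{n=1}^N q_n') = (\lambda\mu)\prod_{n=1}^N q_n'$ with $\lambda\mu \ge 1$, and the product of $\mu\prod_{n=1}^N q_n'$ with $\nu\prod_{m=1}^M r_m'$ is $(\mu\nu)$ times a product of $N+M$ elements of $Q'$, again with $\mu\nu \ge 1$.

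The substantive clause is that $q^2$ is coercive for every $q \in Q$. Writing $q = \lambda \prod_{n=1}^N q_n'$ with commuting factors gives $q^2 = \lambda^2 \prod_{n=1}^N (q_n')^2$, and since $\lambda^2 \ge 1$ it suffices to show $\prod_{n=1}^N (q_n')^2$ is coercive. The key preliminary step is the sublemma: if $q' \in \A^+_\Hermitian$ satisfies $q' \gtrsim \epsilon\Unit$ with $\epsilon > 0$, then $(q')^2 \gtrsim \epsilon^2\Unit$. This follows from the identity
\[
  (q')^2 = \epsilon^2\Unit + (q'-\epsilon\Unit)^2 + 2\epsilon\,(q'-\epsilon\Unit),
\]
where $(q'-\epsilon\Unit)^2 \in \A^{++}_\Hermitian \subseteq \A^+_\Hermitian$ is positive, and $2\epsilon\,(q'-\epsilon\Unit) \gtrsim 0$ (positivity preserved under multiplication by the positive scalar $2\epsilon$, via the axiom $d^* a\,d \lesssim d^* b\,d$ applied to $0 \lesssim q'-\epsilon\Unit$ with $d = \sqrt{2\epsilon}\,\Unit$); adding $\epsilon^2\Unit$ to $0 \lesssim (q'-\epsilon\Unit)^2 + 2\epsilon(q'-\epsilon\Unit)$ yields the claim.

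With each $(q_n')^2$ coercive, I would finish by induction on $N$. The case $N = 1$ is the sublemma. For the step ($N \ge 2$) set $s := \prod_{n=1}^{N-1} q_n'$ and $t := q_N'$; these are commuting Hermitian elements with $s^2$ coercive by the induction hypothesis and $t^2$ coercive, so Lemma~\ref{lemma:coercive1} gives that $s^2 t^2 = \prod_{n=1}^N (q_n')^2$ is coercive. Finally, multiplying a lower bound $\prod_{n=1}^N (q_n')^2 \gtrsim \delta\Unit$ by $\lambda^2 \ge 1$ shows $q^2 \gtrsim \lambda^2\delta\,\Unit$ with $\lambda^2\delta > 0$, so $q^2$ is coercive. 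The only genuine obstacle is the sublemma on squares of coercive elements: Lemma~\ref{lemma:coercive1} is phrased in terms of coercivity of the squares $q^2, r^2$ rather than of the generators $q_n'$, so one must first pass from $q_n' \gtrsim \epsilon\Unit$ to $(q_n')^2 \gtrsim \epsilon^2\Unit$; once this is in hand, everything else is a direct application of Lemma~\ref{lemma:coercive1} and the order axioms.
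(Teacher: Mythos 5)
Your proof is correct and follows essentially the same route as the paper: the same identity $(q')^2 = (q'-\epsilon\Unit)^2 + 2\epsilon\,(q'-\epsilon\Unit) + \epsilon^2\Unit$ to get coercivity of squares of the generators, then Lemma~\ref{lemma:coercive1} (applied inductively over the number of factors) for coercivity of $q^2$ with $q\in Q$, with the closure and commutativity clauses handled as bookkeeping. The only difference is that you spell out explicitly the induction and the Hermiticity/closure verifications that the paper leaves implicit.
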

\begin{proof}
  If $q'\in \A^+_\Hermitian$ is coercive, then $q'\gtrsim \epsilon \Unit$ and thus
  $(q')^2 = (q'-\epsilon \Unit)^2+ 2 \epsilon \,(q'-\epsilon \Unit) + \epsilon^2 \Unit \gtrsim \epsilon^2 \Unit$
  hold for some $\epsilon > 0$. So $(q')^2$ is also coercive. From Lemma~\ref{lemma:coercive1} it now follows that $q^2$
  is coercive for all $q\in Q$. As $Q$ is closed under the multiplications
  and pairwise commuting by construction, $Q$ is dominant.
  Finally, $Q'\subseteq Q$ is obvious.
\end{proof}
\begin{lemma} \label{lemma:stieltjesvector}
  Let $\Hilb$ be a Hilbert space, $\Dom\subseteq \Hilb$ a dense linear subspace
  and $q\in \Adbar(\Dom)^+_\Hermitian$ coercive. If $S\subseteq \Dom$
  is a linear subspace of $\Dom$ consisting of Stieltjes vectors for $q$, i.e. if
  \begin{equation*}
    \sum_{n=1}^\infty \frac{1}{\seminorm{}{q^n\phi}^{1/(2n)}} = \infty
  \end{equation*}
  holds for all $\phi$ in $S\backslash\{0\}$, and if $q$ can be restricted to an endomorphism
  $q|_S\colon S \to S$, then every $\psi \in \Hilb$ that is orthogonal
  on all $q\phi$ with $\phi \in S$ is also orthogonal on all $\phi\in S$,
  so $\set{q\phi}{\phi\in S}^\bot = S^\bot$.
\end{lemma}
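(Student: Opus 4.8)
The plan is to establish the non-trivial inclusion $\set{q\phi}{\phi\in S}^\bot\subseteq S^\bot$; the reverse inclusion holds automatically, since $q$ maps $S$ into $S$ and hence every $\psi\in S^\bot$ is orthogonal to each $q\phi$ with $\phi\in S$. So fix $\phi\in S$ and $\psi\in\Hilb$ with $\skal{\psi}{q\chi}=0$ for all $\chi\in S$, and aim to show $\skal{\psi}{\phi}=0$; the case $\phi=0$ being trivial, assume $\phi\neq0$. Because $q$ restricts to an endomorphism of $S$, each power $q^{n}\phi$ again lies in $S$, so applying the hypothesis to $\chi=q^{n}\phi$ gives $\skal{\psi}{q^{n+1}\phi}=0$; thus $\skal{\psi}{q^{n}\phi}=0$ for all $n\geq1$, and everything reduces to recovering the single missing moment $\skal{\psi}{q^{0}\phi}=\skal{\psi}{\phi}$ from the vanishing of all higher ones together with the Stieltjes growth bound.

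To bring the spectral theorem to bear I would pass to a self-adjoint realisation of $q$. Since $q$ is a symmetric operator on $\Dom$ that is coercive, say $\skal{\phi}{q\phi}\geq\epsilon\skal{\phi}{\phi}$ for all $\phi\in\Dom$ with some $\epsilon>0$, its Friedrichs extension $A$ is self-adjoint with $A\geq\epsilon$; as $q^{k}\phi\in S\subseteq\Dom\subseteq\Dom[A]$ and $A$ extends $q$, one obtains $A^{n}\phi=q^{n}\phi$ for every $n$, so $\phi$ is a $C^{\infty}$-vector for $A$. Writing $E$ for the spectral resolution of $A$ and $\mu$ for the finite positive measure $B\mapsto\skal{\phi}{E(B)\phi}$, which is supported in $[\epsilon,\infty[\,$, its moments $s_{m}:=\int\lambda^{m}\,d\mu=\skal{\phi}{A^{m}\phi}$ satisfy $s_{2n}=\seminorm{}{q^{n}\phi}^{2}$. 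Representing the orthogonal projection of $\psi$ onto the cyclic subspace generated by $\phi$ under $A$ by a function $g\in L^{2}(\mu)$ (so that $\phi\leftrightarrow1$ and $A\leftrightarrow$ multiplication by $\lambda$), one has $\skal{\psi}{A^{n}\phi}=\int\cc{g}\,\lambda^{n}\,d\mu$. The problem thus becomes purely scalar: given $\int\cc{g}\,\lambda^{n}\,d\mu=0$ for all $n\geq1$, conclude $\int\cc{g}\,d\mu=0$, which is exactly $\skal{\psi}{\phi}=0$.

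The analytic heart of the argument, and the step I expect to be the main obstacle, is the determinacy input extracted from the Stieltjes condition. The hypothesis $\sum_{n=1}^{\infty}\seminorm{}{q^{n}\phi}^{-1/(2n)}=\sum_{n=1}^{\infty}s_{2n}^{-1/(4n)}=\infty$ is the even-indexed part of the Stieltjes--Carleman series $\sum_{m}s_{m}^{-1/(2m)}$, which therefore also diverges. I would use this to show that $1$ lies in the $L^{2}(\mu)$-closure of $\set{\lambda\,p(\lambda)}{p\in\CC[\lambda]}$, equivalently that the polynomials are dense in $L^{2}(\lambda^{2}\,d\mu)$; coercivity is decisive here, since it places $0$ outside the support of $\mu$, so that $1/\lambda$ is bounded and belongs to $L^{2}(\lambda^{2}\,d\mu)$. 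Granting this, pick polynomials $p_{k}$ with $\lambda p_{k}\to1$ in $L^{2}(\mu)$; since each $\lambda p_{k}$ is a linear combination of the monomials $\lambda^{n}$ with $n\geq1$, the vanishing of the higher moments forces $\int\cc{g}\,\lambda p_{k}\,d\mu=0$, and continuity of the $L^{2}(\mu)$-inner product yields $\skal{\psi}{\phi}=\int\cc{g}\,d\mu=\lim_{k}\int\cc{g}\,\lambda p_{k}\,d\mu=0$.

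The two facts needed for the crux step are genuinely moment-theoretic and are where the real work lies: that the Stieltjes--Carleman condition yields density of the polynomials in the weighted space $L^{2}(\lambda^{2}\,d\mu)$, and that this growth condition is inherited by the shifted moment sequence $m\mapsto s_{m+2}$, the latter resting on the log-convexity of $m\mapsto s_{m}$ supplied by Lemma~\ref{lemma:growth}. For both I would appeal to the theory of Stieltjes vectors in the sense of \cite{masson.mcclary:1972} together with standard moment-problem results. A more self-contained alternative would bypass the self-adjoint extension and argue directly on $\Hilb$ with a vector-valued quasi-analytic estimate, but the essential difficulty---turning the divergence of $\sum_{n=1}^{\infty}\seminorm{}{q^{n}\phi}^{-1/(2n)}$ into a uniqueness statement---would be the same and would remain the decisive point.
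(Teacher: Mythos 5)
Your proposal is correct, but it takes a genuinely different route from the paper's proof. The paper argues directly on the symmetric operator, with no self-adjoint extension and no spectral theorem: it fixes $\phi\in S\backslash\{0\}$, applies Gram--Schmidt to the Krylov family $\set{(-q)^n\phi}{n\in\NN_0}$ (treating the case where this span is finite-dimensional separately, via an inverse of $q$ on that span), rewrites the orthogonality conditions $\skal{\psi}{(-q)\,e_n}=0$ as a tridiagonal recursion for the coefficients $\skal{\psi}{e_n}$, and then quotes the quantitative estimate from the proof of Lemma~6 in \cite{masson.mcclary:1972}, which bounds the divergent series $\sum_{n}\seminorm{}{q^n\phi}^{-1/(2n)}$ by a multiple of the $\ell^2$-norm of the solution of that recursion at spectral parameter $\lambda=0$; coercivity is precisely what allows the choice $\lambda=0$, and the resulting contradiction forces $\skal{\psi}{\phi}=0$. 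You instead pass to the Friedrichs extension, reduce via a cyclic subspace to a scalar Stieltjes moment problem, and conclude by Carleman-type determinacy plus polynomial density in $L^2(\lambda^2\,d\mu)$, with coercivity entering through the support of $\mu$ lying in $[\epsilon,\infty[\,$ so that $1/\lambda$ is an admissible $L^2(\lambda^2\,d\mu)$-function. Both arguments outsource the analytic core to the literature at a comparable depth (the Masson--McClary inequality is itself a Carleman-type quasi-analyticity estimate), so this is a fair trade; your route is more conceptual and handles the degenerate finite-dimensional case uniformly, while the paper's is more self-contained in its prerequisites. Two points in your crux step deserve care when written out: first, the transfer of the Carleman condition to the shifted moments $s_{m+2}$ does follow from log-convexity, via the Denjoy--Carleman equivalence of $\sum_m M_m^{-1/m}=\infty$ with $\sum_m M_{m-1}/M_m=\infty$; second, the density statement you need is usually stated for Hamburger-determinate measures (M.~Riesz), whereas Carleman's condition here is of Stieltjes type, so you must either symmetrize the measure via $\lambda\mapsto\sqrt{\lambda}$ or argue that Stieltjes determinacy together with the lower bound $\epsilon>0$ already forces Hamburger determinacy --- both work, but neither is a one-line citation. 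Finally, a small imprecision: that $1$ lies in the $L^2(\mu)$-closure of $\set{\lambda p(\lambda)}{p\in\CC[\lambda]}$ is implied by, not literally equivalent to, density of polynomials in $L^2(\lambda^2\,d\mu)$; you only need, and indeed only use, that implication.
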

\begin{proof}
  This is a variant of Nelson's criterium for self-adjoint operators, which
  was essentially proven in \cite{masson.mcclary:1972}, Lemma~6. 
  For the convenience of the reader, the proof and some adaptations will be outlined:
  
  Let $\psi \in \Hilb$ be given such that $\skal{\psi}{q\phi} = 0$ for all 
  $\phi \in S$, then also $\skal{\psi}{(-q)^n\phi} = 0$ for all $\phi \in S$.
  For a fixed $\phi \in S\backslash\{0\}$, let $\mathscr{H}_\phi \subseteq S$ be the linear span of the
  $(-q)^n \phi$ for all $n\in \NN$. If $\mathscr{H}_\phi$ has finite dimension,
  then it follows from basic linear algebra that the coercive $q$ has an inverse
  $\tilde{q}^{-1}$ on $\mathscr{H}_\phi$ and thus 
  $\skal{\psi}{\phi} = \skal{\psi}{q\, \tilde{q}^{-1}\phi} = 0$. Otherwise, the
  set $\set{(-q)^n \phi}{n\in \NN_0}$ is a basis of $\mathscr{H}_\phi$ and the
  usual Gram-Schmidt orthogonalisation procedure applied to this basis
  yields an orthogonal basis $\set{f_n}{n\in \NN_0}$ starting with $f_0 = \phi$,
  from which one can construct an orthonormal basis $\set{e_n = f_n / \seminorm{}{f_n}}{n\in \NN_0}$ of 
  $\mathscr{H}_\phi$. The condition $\skal{\psi}{(-q)\, e_n} = 0$ for all $n\in \NN$
  then is equivalent to the infinite system of linear equations 
  \begin{align*}
    \cc{\psi}_0 a_0 + \cc{\psi}_1 b_0 &= \lambda \cc{\psi}_0 \\
    \text{and}\quad\quad
    \cc{\psi}_{i-1} b_{i-1} + \cc{\psi}_i a_i + \cc{\psi}_{i+1} b_i &= \lambda \cc{\psi}_i
    \quad\quad\text{for all }i\in \NN
  \end{align*}
  for $\lambda = 0$ with
  $\cc{\psi}_i = \skal{\psi}{e_i} \in \CC$, $a_i = \skal{e_i}{(-q)\,e_i} \in \RR$
  as well as $b_i = \skal{e_{i+1}}{(-q)\, e_i} = \seminorm{}{f_{i+1}} / \seminorm{}{f_i}>0$ for all $i\in \NN_0$.
  
  The above system of linear equations for the sequence $(\cc{\psi}_i)_{i\in \NN_0}$ has,
  for every choice of $\lambda,\cc{\psi}_0 \in \CC$, a unique
  solution, which can easily be constructed recursively. If $\cc{\psi}_0 \neq 0$, then we can assume
  without loss of generality that $\cc{\psi}_0 = 1$. Denote for every $\lambda\in \CC$
  by $p_n(\lambda) := \cc{\psi}_n$ the solution starting with $p_0 = \cc{\psi}_0 = 1$.
  Then one can show (\cite{masson.mcclary:1972}, proof of Lemma~6) that
  \begin{equation*}
    \infty
    =
    \sum_{n=1}^\infty \frac{1}{\seminorm{}{q^n \phi}^{1/(2n)}} 
    \le
    \E \,\bigg(\frac{1-c}{\lambda-\mu}\bigg)^{\frac{1}{2}}\,\bigg(\sum_{n=0}^\infty \abs{p_n(\lambda)}^2 \bigg)^{\frac{1}{2}}
  \end{equation*}
  holds with some $c\in[0,1]$ and Euler's constant $\E$
  for all $\mu,\lambda \in \RR$ which fulfil the conditions that
  $\mu < \lambda$ and that there
  exists an $\epsilon > 0$ with the property that $q-(\epsilon-\mu) \Unit \ge 0$
  (i.e. $-q$ is semibounded from above by $\mu-\epsilon$).
  As $q$ is coercive, this especially applies to $\lambda = 0$, which
  yields the contradiction 
  $\infty \le \big(\sum_{n=0}^\infty \abs{p_n(0)}^2\big)^{1/2} = \seminorm{}{\psi}$,
  so $0 = \cc{\psi}_0 = \skal{\psi}{e_0} =  \skal{\psi}{\phi} / \seminorm{}{\phi}$.
  Thus $\set{q\phi}{\phi\in S}^\bot \subseteq S^\bot$ and we conclude that
  $\set{q\phi}{\phi\in S}^\bot = S^\bot$ because $\set{q\phi}{\phi\in S} \subseteq S$
  by assumption.
\end{proof}
As a coercive $q\in \Adbar(\Dom)^+_\Hermitian$ is essentially self-adjoint if and
only if the orthogonal complement of its image in the surrounding Hilbert space 
$\Hilb$ is $\{0\}$,
the above immediately shows that $q$ is essentially self-adjoint if the orthogonal 
complement of $S$ in $\Hilb$ is $\{0\}$. Even more, this argument also applies to 
some products:
\begin{proposition} \label{proposition:essselfad}
  Let $\Hilb$ be a Hilbert space, $\Dom\subseteq \Hilb$ a dense linear subspace
  and $Q' \subseteq \Adbar(\Dom)^+_\Hermitian$ a non-empty set of coercive and pairwise 
  commuting elements. Moreover, assume that every $\phi \in \Dom$ is a 
  Stieltjes vector for every $q' \in Q'$ like in the previous 
  Lemma~\ref{lemma:stieltjesvector} and construct the dominant set $Q$ out of
  $Q'$ like in Lemma~\ref{lemma:dominantconstruction}. Then $q^2$ is essentially
  self-adjoint for every $q\in Q$.
\end{proposition}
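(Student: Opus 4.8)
The plan is to reduce essential self-adjointness of $q^2$ to a density statement and then establish that density by peeling off the factors of $q$ one at a time, always invoking Lemma~\ref{lemma:stieltjesvector} only for the individual (first-power) elements of $Q'$. Fix $q\in Q$, say $q=\lambda q_1'\cdots q_N'$ with $\lambda\in[1,\infty[\,$ and $q_1',\dots,q_N'\in Q'$ pairwise commuting. Then $q$ is Hermitian, and since the factors commute, $q^2=\lambda^2(q_1')^2\cdots(q_N')^2$ is a square, hence a positive element of $\Adbar(\Dom)^+_\Hermitian$, and it is coercive because $Q$ is dominant by Lemma~\ref{lemma:dominantconstruction}. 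As remarked after Lemma~\ref{lemma:stieltjesvector}, a coercive positive element of $\Adbar(\Dom)^+_\Hermitian$ is essentially self-adjoint if and only if the orthogonal complement of its image in $\Hilb$ is $\{0\}$. Since $\Dom$ is dense and rescaling a linear subspace by the nonzero scalar $\lambda^2$ changes nothing, it therefore suffices to show that every $\psi\in\Hilb$ with $\psi\perp\set{(q_1')^2\cdots(q_N')^2\phi}{\phi\in\Dom}$ already satisfies $\psi\perp\Dom$.

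The key difficulty is that one cannot hope to apply Lemma~\ref{lemma:stieltjesvector} directly to $q^2$ with the full domain as the subspace of Stieltjes vectors: the Stieltjes (quasi-analytic) growth condition is in general destroyed both by taking powers and by forming products, so the vectors of $\Dom$ need not be Stieltjes vectors for $q^2$, even though they are for each $q_i'$. Instead I would exploit the sharper conclusion $\set{q\phi}{\phi\in S}^\bot=S^\bot$ of Lemma~\ref{lemma:stieltjesvector}, which holds for an arbitrary invariant subspace $S$ of Stieltjes vectors, and apply it only to the first powers $q_{j+1}'$, for which the hypothesis guarantees that every vector of $\Dom$ is a Stieltjes vector.

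Concretely, set $S_j:=\set{(q_{j+1}')^2\cdots(q_N')^2\phi}{\phi\in\Dom}$ for $0\le j\le N$, so that $S_0$ is the image considered above and $S_N=\Dom$. Each $S_{j+1}$ is a linear subspace of $\Dom$ that, by commutativity of $Q'$ and $q_{j+1}'\Dom\subseteq\Dom$, is invariant under $q_{j+1}'$, and all of its vectors are Stieltjes vectors for $q_{j+1}'$ because $S_{j+1}\subseteq\Dom$; the same then holds for $q_{j+1}'S_{j+1}\subseteq S_{j+1}$. I would then prove the single implication $\psi\perp S_j\Rightarrow\psi\perp S_{j+1}$ by two applications of Lemma~\ref{lemma:stieltjesvector} to the coercive element $q_{j+1}'$: first with the invariant subspace $q_{j+1}'S_{j+1}$, which gives $S_j^\bot=\set{q_{j+1}'\eta}{\eta\in q_{j+1}'S_{j+1}}^\bot=(q_{j+1}'S_{j+1})^\bot$ and hence $\psi\perp q_{j+1}'S_{j+1}$, since $S_j=(q_{j+1}')^2S_{j+1}$; and then with $S_{j+1}$ itself, which gives $(q_{j+1}'S_{j+1})^\bot=S_{j+1}^\bot$ and hence $\psi\perp S_{j+1}$. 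Iterating this implication from $j=0$ to $j=N$ turns $\psi\perp S_0$ into $\psi\perp S_N=\Dom$, so $\psi=0$ and $q^2$ is essentially self-adjoint. The main obstacle, and the only genuinely new step beyond Lemma~\ref{lemma:stieltjesvector}, is exactly this double peeling: it is what lets one remove a whole square $(q_{j+1}')^2$ using nothing but the first-power Stieltjes property, thereby circumventing the failure of the Stieltjes condition for $q^2$.
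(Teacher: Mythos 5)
Your proposal is correct and follows essentially the same route as the paper: reduce essential self-adjointness of the coercive $q^2$ to density of its image, then peel off the factors from $Q'$ one at a time by applying Lemma~\ref{lemma:stieltjesvector} only to first powers $q_j'$, acting on images of $\Dom$ under products of elements of $Q'$, which are invariant subspaces of Stieltjes vectors. The paper phrases this as an induction showing that the image of $\Dom$ under \emph{any} finite product $q_1'\cdots q_N'$ of elements of $Q'$ is dense (so each step removes a single factor rather than a whole square), but this is only a cosmetic difference from your chain $S_0^\bot = S_1^\bot = \dots = S_N^\bot = \{0\}$.
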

\begin{proof}
  Let $q\in Q$ be given, then $q^2$ is coercive because $Q$ is dominant by 
  Lemma~\ref{lemma:dominantconstruction}, so it is sufficient to show that 
  the image of $\Dom$ under $q^2$ is dense in $\Hilb$. By construction of $Q$ it
  is thus sufficient to prove by induction, that for all $N\in \NN$ and all 
  $q_1',\dots,q_N' \in Q'$ the image of $\Dom$ under $q_1' \cdots q_N'$ is dense in $\Hilb$
  (the effect of a multiplication with a scalar $\lambda\in[1,\infty[\,$ is trivial):
  
  If $N=1$, then this is simply the previous Lemma~\ref{lemma:stieltjesvector}
  with $S=\Dom$. So assume that it has been shown for one $N\in \NN$ and all 
  $q_1',\dots,q_N' \in Q'$ that the image of $\Dom$ under $q_1' \cdots q_N'$
  is dense in $\Hilb$ and let $q_1', \dots, q_{N+1}' \in Q'$ be given.
  Define $S$ as the image of $\Dom$ under $q_1'\cdots q_{N}'$, then $S\subseteq \Dom$,
  the orthogonal complement of $S$ in $\Hilb$ is $\{0\}$ by assumption,
  and $q_{N+1}'q_1'\cdots q_{N}' \phi = q_1'\cdots q_{N}'q_{N+1}'\phi \in S$
  holds for all $\phi\in \Dom$, i.e. $q_{N+1}'$ can be restricted to an endomorphism
  of $S$. The previous Lemma~\ref{lemma:stieltjesvector} then applies to $q'_{N+1}$
  and shows that the orthogonal complement of 
  $\set{q'_{N+1}\phi}{\phi\in S} = \set{q'_{1}\dots q'_{N+1}\phi}{\phi\in \Dom}$
  in $\Hilb$ is $\{0\}$.
\end{proof}
As a corollary, we get a construction of strictly self-adjoint $O^*$\=/algebras
out of GNS representations of abstract $O^*$\=/algebras:
\begin{corollary} \label{corollary:main}
  Let $(\A,\Omega)$ be an abstract $O^*$\=/algebra and $Q'\subseteq \A^+_\Hermitian$
  a non-empty set of coercive and pairwise commuting elements and such that every
  $\omega \in \States(\A,\Omega)$ is a Stieltjes state for all $q'\in Q'$.
  Construct the dominant set $Q$ out of $Q'$ like in 
  Lemma~\ref{lemma:dominantconstruction}.
  For $\omega \in \States(\A,\Omega)$, let $\pi_\omega^\cl \colon Q^\downarrow \to \Adbar(\Dom[\omega]^\cl)$
  be the closed GNS representation of $Q^\downarrow$ associated to $\omega$
  like in Definition~\ref{definition:gns}, then $\pi_\omega^\cl(Q^\downarrow) \subseteq \pi_\omega^\cl(Q)^\downarrow$
  and $\pi_\omega^\cl(Q)$ fulfils the conditions of Lemma~\ref{lemma:restrict},
  i.e. $\pi_\omega^\cl(Q)$ is a dominant set with the property that $\pi_\omega^\cl(Q)^\downarrow$ is a closed $O^*$\=/algebra
  on $\Dom[\omega]^\cl$
  and $\pi_\omega^\cl(q)^2$ is essentially self-adjoint for every $\pi_\omega^\cl(q) \in \pi_\omega^\cl(Q)$.
\end{corollary}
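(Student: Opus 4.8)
The plan is to verify, one by one, the three requirements that Lemma~\ref{lemma:restrict} imposes on the set $\pi_\omega^\cl(Q)\subseteq\Adbar(\Dom[\omega]^\cl)$---namely that it is dominant, that $\pi_\omega^\cl(Q)^\downarrow$ is a closed $O^*$\=/algebra, and that $\pi_\omega^\cl(q)^2$ is essentially self-adjoint for every $q\in Q$---together with the inclusion $\pi_\omega^\cl(Q^\downarrow)\subseteq\pi_\omega^\cl(Q)^\downarrow$. Throughout I would exploit that $\pi_\omega^\cl$ is a positive unital $^*$\=/homomorphism by Lemma~\ref{lemma:gnspositive}.

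First I would check that $\pi_\omega^\cl(Q)$ is dominant. Pairwise commutativity, closedness under multiplication by scalars $\lambda\in[1,\infty[\,$, and closedness under products transfer directly from $Q$ because $\pi_\omega^\cl$ is a homomorphism (for instance $\pi_\omega^\cl(q)\,\pi_\omega^\cl(r)=\pi_\omega^\cl(qr)\in\pi_\omega^\cl(Q)$), while each $\pi_\omega^\cl(q)$ is Hermitian since $q$ is. For coercivity, if $q^2\gtrsim\epsilon\Unit$ then $\pi_\omega^\cl(q)^2=\pi_\omega^\cl(q^2)\ge\epsilon\Unit$ upon applying Lemma~\ref{lemma:gnspositive} to $q^2-\epsilon\Unit\in\A^+_\Hermitian$. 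The same mechanism yields $\pi_\omega^\cl(Q^\downarrow)\subseteq\pi_\omega^\cl(Q)^\downarrow$: given $a\in Q^\downarrow$ and $q\in Q$, pick $r,s\in Q$ with $a^*q^2a\lesssim r^2$ and $a\,q^2a^*\lesssim s^2$ and apply the positive homomorphism to obtain $\pi_\omega^\cl(a)^*\pi_\omega^\cl(q)^2\pi_\omega^\cl(a)\le\pi_\omega^\cl(r)^2$ and the analogous bound by $\pi_\omega^\cl(s)^2$.

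Next I would show that $\pi_\omega^\cl(Q)^\downarrow$ is a closed $O^*$\=/algebra, that is, that $\Dom[\omega]^\cl$ is complete for $\tau_{\pi_\omega^\cl(Q)^\downarrow}$. Since $\pi_\omega^\cl(Q^\downarrow)$ is already a closed $O^*$\=/algebra by Definition~\ref{definition:gns}, it suffices to see that $\tau_{\pi_\omega^\cl(Q)^\downarrow}$ and $\tau_{\pi_\omega^\cl(Q^\downarrow)}$ coincide. One inclusion of topologies comes from $\pi_\omega^\cl(Q^\downarrow)\subseteq\pi_\omega^\cl(Q)^\downarrow$; for the other, Proposition~\ref{proposition:dominatedchar} identifies $\set{\seminorm{\pi_\omega^\cl(q)^2}{\argument}}{q\in Q}$ as a cofinal generating family for $\tau_{\pi_\omega^\cl(Q)^\downarrow}$, and each such seminorm is $\tau_{\pi_\omega^\cl(Q^\downarrow)}$-continuous because $q^2\in(Q^\downarrow)^+_\Hermitian$.

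The crux---and the only place the Stieltjes hypothesis is needed---is the essential self-adjointness of $\pi_\omega^\cl(q)^2$. The key step is to turn ``$\omega$ is a Stieltjes state for $q'$'' into ``every vector of $\Dom[\omega]$ is a Stieltjes vector for $\pi_\omega(q')$''. For a normalized $\phi=[b]_\omega$ with $\dupr{\omega}{b^*b}=1$ one computes $\seminorm{}{\pi_\omega(q')^n\phi}^2=\dupr{b\acts\omega}{(q')^{2n}}$, so the Stieltjes-vector series for $\phi$ is precisely the even-index subseries of the Stieltjes-state series $\sum_{n}\dupr{b\acts\omega}{(q')^n}^{-1/(2n)}$. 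Since $q'$ is coercive, $\dupr{b\acts\omega}{q'}\ge\epsilon>0$, so the first alternative of Definition~\ref{definition:stieltjesstate} fails and this series diverges; its terms are non-increasing by Lemma~\ref{lemma:growth} applied to the positive functional $b\acts\omega$, whence the full series and its even subseries diverge together, and scale invariance of the divergence handles un-normalized $\phi$. I would then apply Proposition~\ref{proposition:essselfad} with Hilbert space $\Hilb_\omega$, domain $\Dom[\omega]$, and the coercive pairwise commuting family $\pi_\omega(Q')$---whose generated dominant set in the sense of Lemma~\ref{lemma:dominantconstruction} is $\pi_\omega(Q)$---to conclude that $\pi_\omega(q)^2$ has dense image for every $q\in Q$. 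Finally, since $\pi_\omega^\cl(q)^2$ is coercive, positive, and restricts to $\pi_\omega(q)^2$ on $\Dom[\omega]\subseteq\Dom[\omega]^\cl$, density of the image already on $\Dom[\omega]$ forces the image of $\Dom[\omega]^\cl$ to be dense, which is exactly essential self-adjointness of $\pi_\omega^\cl(q)^2$. I expect this matching of the abstract growth condition to the operator-theoretic Stieltjes-vector condition, and the subsequent passage from $\Dom[\omega]$ to its completion, to be the main obstacle.
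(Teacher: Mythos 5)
Your proposal is correct and takes essentially the same route as the paper: positivity of the GNS representation (Lemma~\ref{lemma:gnspositive}) transfers dominance and gives $\pi_\omega^\cl(Q^\downarrow)\subseteq\pi_\omega^\cl(Q)^\downarrow$, the coincidence of $\tau_{\pi_\omega^\cl(Q)^\downarrow}$ and $\tau_{\pi_\omega^\cl(Q^\downarrow)}$ via the cofinal family $\set{\seminorm{\pi_\omega^\cl(q^2)}{\argument}}{q\in Q}$ of Proposition~\ref{proposition:dominatedchar} yields closedness, and the identity $\seminorm{}{\pi_\omega(q')^n[b]_\omega}^2=\dupr{b\acts\omega}{(q')^{2n}}$ together with the monotonicity from Lemma~\ref{lemma:growth} (so the even-index subseries inherits divergence) converts Stieltjes states into Stieltjes vectors, allowing Proposition~\ref{proposition:essselfad} to be applied on $\Dom[\omega]$ and the result to be lifted to $\Dom[\omega]^\cl$. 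Your explicit dense-image argument for that last lifting step is just a spelled-out version of the paper's remark that essential self-adjointness of the restriction $\pi_\omega(q)^2$ implies it for $\pi_\omega^\cl(q)^2$.
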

\begin{proof}
  If $a\in (Q^\downarrow)^+_\Hermitian$, then $\pi_\omega^\cl(a) \in \Adbar(\Dom[\omega]^\cl)^+_\Hermitian$
  by Lemma~\ref{lemma:gnspositive}. From this it follows that $\pi_\omega^\cl(Q)$ is a dominant set and that
  $\pi_\omega^\cl(Q^\downarrow) \subseteq \pi_\omega^\cl(Q)^\downarrow$.
  Moreover, $\pi_\omega^\cl(Q)^\downarrow$ is a closed $O^*$\=/algebra
  on $\Dom[\omega]^\cl$ because $\pi_\omega^\cl(Q^\downarrow)$ is a closed $O^*$-algebra by
  construction and because the locally convex topologies 
  $\tau_{\pi_\omega^\cl(Q)^\downarrow}$ and $\tau_{\pi_\omega^\cl(Q^\downarrow)}$
  both are induced by the system of seminorms $\set{\seminorm{\pi_\omega^\cl(q^2)}{\argument}}{q\in Q}$
  (see Proposition~\ref{proposition:dominatedchar}).
  Furthermore, $\pi_\omega^\cl(q)^2$ is essentially self-adjoint for every $\pi_\omega^\cl(q) \in \pi_\omega^\cl(Q)$,
  because its restriction to the ordinary GNS representation space $\pi_\omega(q)^2\colon \Dom[\omega] \to \Dom[\omega]$
  is already essentially self-adjoint by the previous Proposition~\ref{proposition:essselfad}.
  Indeed, every $[b]_\omega \in \Dom[\omega]$ is a Stieltjes vector for every $\pi_\omega(q')$ with $q'\in Q'$
  because 
  \begin{equation*}
    \sum_{n=1}^\infty \seminorm[\big]{\omega}{(q')^n [b]_\omega}^{-\frac{1}{2n}}
    =
    \sum_{n=1}^\infty \dupr{b\acts \omega}{(q')^{2n}}^{-\frac{1}{4n}}
    \ge
    \frac{1}{2} \sum_{n=2}^\infty \dupr{b\acts \omega}{(q')^{n}}^{-\frac{1}{2n}}
    =
    \infty
  \end{equation*}
  holds for all $[b]_\omega \neq 0$ by using again that 
  $\dupr{b\acts \omega}{(q')^{2n}}^{1/(4n)} \le \dupr{b\acts \omega}{(q')^{2n+1}}^{1/(4n+2)}$
  for all $n\in\NN$ due to Lemma~\ref{lemma:growth}.
\end{proof}
Now we have all the prerequisits to prove the main results of this section:
\begin{definition}
  Let $(\A,\Omega)$ be an abstract $O^*$\=/algebra, then define for every $\omega \in \Omega^+_\Hermitian$
  the seminorm $\seminorm{\omega,\st}{\argument}$ on $\A$ as 
  \begin{equation*}
    a \mapsto \seminorm{\omega,\st}{a} := \sqrt{\dupr{\omega}{a^*a}}\,.
  \end{equation*}
  The locally convex topology on $\A$ defined by all $\seminorm{\omega,\st}{\argument}$ 
  with $\omega \in \Omega^+_\Hermitian$ is called the \neu{strong topology}.
\end{definition}
\begin{definition}
  An abstract $O^*$\=/algebra $(\A,\Omega)$ is said to have \neu{sufficiently many Stieltjes states}
  if there exists a
  set $Q'\subseteq \A^+_\Hermitian$ of coercive and pairwise commuting elements with the
  following properties:
  \begin{itemize}
    \item Every $\omega\in \States(\A,\Omega)$ is a Stieltjes state for every $q'\in Q'$.
    \item $(Q^\downarrow)^+_\Hermitian$ is strongly dense in $\A^+_\Hermitian$,
      where $Q \subseteq \A_\Hermitian$
      is constructed out of $Q'$ like in Lemma~\ref{lemma:dominantconstruction}.
  \end{itemize}
\end{definition}
\begin{definition}
  An abstract $O^*$\=/algebra $(\A,\Omega)$ is called \neu{downwards closed} if
  every $\rho\in \A^{*,+}_\Hermitian$, for which there exists an $\omega \in \Omega^+_\Hermitian$
  such that $\omega-\rho \in \A^{*,+}_\Hermitian$, is an element of $\Omega^+_\Hermitian$.
\end{definition}
\begin{proposition} \label{proposition:hyperregular}
  Let $(\A,\Omega)$ be a downwards closed abstract $O^*$\=/algebra with sufficiently 
  many Stieltjes states, $\omega \in \Omega^+_\Hermitian$ and let $\rho\in \A^*_\Hermitian$
  be algebraically positive and such that $\omega-\rho$ is also algebraically positive.
  Then $\rho \in \Omega^+_\Hermitian$ and $0 \le \rho \le \omega$.
\end{proposition}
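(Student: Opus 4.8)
The plan is to represent $\rho$ through a Radon--Nikodym type operator on the GNS space of $\omega$ and to transport positivity from the distinguished subalgebra $Q^\downarrow$ back to all of $\A^+_\Hermitian$. Since $\rho$ and $\omega-\rho$ are algebraically positive, one has $0 \le \dupr{\rho}{b^*b} \le \dupr{\omega}{b^*b}$ for all $b\in\A$, so the positive sesquilinear form $([b]_\omega,[b']_\omega)\mapsto\dupr{\rho}{b^*b'}$ on $\Dom[\omega]$ is well defined (the Gel'fand ideal of $\omega$ is contained in that of $\rho$) and bounded by the Cauchy Schwarz inequality; hence it extends to a bounded operator $T\in\Adbar(\Hilb_\omega)$ with $0\le T\le\Unit$, $T=T^*$ and $\skal{[b]_\omega}{T[b']_\omega}_\omega = \dupr{\rho}{b^*b'}$. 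The same domination already shows that $\rho$ and $\omega-\rho$ are strongly continuous, since $\abs{\dupr{\rho}{a}} \le \dupr{\rho}{\Unit}^{1/2}\dupr{\rho}{a^*a}^{1/2} \le \dupr{\rho}{\Unit}^{1/2}\seminorm{\omega,\st}{a}$, and analogously for $\omega-\rho$, with $\seminorm{\omega,\st}{\argument}$ one of the defining seminorms of the strong topology.

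The decisive point is that, because $\rho$ is Hermitian, $T$ lies in the weak commutant: for Hermitian $a\in\A$ and $[b]_\omega,[b']_\omega\in\Dom[\omega]$ one computes, using that $T$ is self-adjoint, $\skal{T[b]_\omega}{\pi_\omega(a)[b']_\omega}_\omega = \dupr{\rho}{b^*ab'} = \dupr{\rho}{b^*a^*b'} = \skal{T\pi_\omega(a)[b]_\omega}{[b']_\omega}_\omega$, and by $\tau$-density this extends to $\phi,\psi\in\Dom[\omega]^\cl$. In particular the commutation condition of Lemma~\ref{lemma:restrict} holds for the dominant set $Q$, so by Corollary~\ref{corollary:main} (this is where the hypothesis of sufficiently many Stieltjes states enters: it guarantees that $\pi_\omega^\cl(Q)^\downarrow$ is a closed $O^*$\=/algebra whose $\pi_\omega^\cl(q)^2$ are essentially self-adjoint, so that Lemma~\ref{lemma:restrict} applies after normalising $\omega$ to a state) the operator $T$ restricts to an endomorphism of $\Dom[\omega]^\cl$ commuting there with every $\pi_\omega^\cl(q)$. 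A straightforward induction, using that $T$ preserves the domain, shows that every $T^n$ satisfies the same commutation condition, and passing to the norm limit so does $T^{1/2}$; thus Lemma~\ref{lemma:restrict} applies to $T^{1/2}$ as well and yields $c:=T^{1/2}\big|_{\Dom[\omega]^\cl}\in\pi_\omega^\cl(Q)^\downarrow$. I expect this restriction step to be the main obstacle: without essential self-adjointness of the $\pi_\omega^\cl(q)^2$ there is no reason for a weak-commutant operator to preserve the domain $\Dom[\omega]^\cl$, and it is precisely the Stieltjes growth condition that rescues it.

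Once $c$ preserves $\Dom[\omega]^\cl$, the weak commutation upgrades to a genuine commutation $c\,\pi_\omega^\cl(a) = \pi_\omega^\cl(a)\,c$ on $\Dom[\omega]^\cl$ for every Hermitian $a$, by combining the weak identity with the adjointability of $\pi_\omega^\cl(a)$ and density. Writing $\Omega_0 := [\Unit]_\omega$ and $\eta := c\,\Omega_0 \in \Dom[\omega]^\cl$, for $a\in(Q^\downarrow)^+_\Hermitian$ I then obtain $\dupr{\rho}{a} = \skal{\Omega_0}{T\pi_\omega^\cl(a)\Omega_0}_\omega = \skal{c\,\Omega_0}{\pi_\omega^\cl(a)\,c\,\Omega_0}_\omega = \skal{\eta}{\pi_\omega^\cl(a)\eta}_\omega \ge 0$, since $\pi_\omega^\cl(a)\ge0$ by Lemma~\ref{lemma:gnspositive}. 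Running the identical argument with $\Unit-T$ in place of $T$ gives $\dupr{\omega-\rho}{a}\ge0$ for all $a\in(Q^\downarrow)^+_\Hermitian$.

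It remains to extend these inequalities from $(Q^\downarrow)^+_\Hermitian$ to all of $\A^+_\Hermitian$. By the assumption of sufficiently many Stieltjes states, $(Q^\downarrow)^+_\Hermitian$ is strongly dense in $\A^+_\Hermitian$, and since $\rho$ and $\omega-\rho$ are strongly continuous, it follows that $\dupr{\rho}{a}\ge0$ and $\dupr{\omega-\rho}{a}\ge0$ for all $a\in\A^+_\Hermitian$. Hence $\rho\in\A^{*,+}_\Hermitian$ and $\omega-\rho\in\A^{*,+}_\Hermitian$, that is $0\le\rho\le\omega$. As $\omega\in\Omega^+_\Hermitian$, the defining property of a downwards closed abstract $O^*$\=/algebra then yields $\rho\in\Omega^+_\Hermitian$, completing the proof. (If $\dupr{\omega}{\Unit}=0$ then $\omega=0$, forcing $\rho=0$ by Cauchy Schwarz, and the statement is trivial.)
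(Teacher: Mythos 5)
Your overall strategy coincides with the paper's: represent $\rho$ by a bounded positive operator via Lax--Milgram on a GNS space, take its square root, verify weak commutation, restrict to the domain via Lemma~\ref{lemma:restrict} (through Corollary~\ref{corollary:main}), deduce $\dupr{\rho}{a}\ge 0$ for $a\in (Q^\downarrow)^+_\Hermitian$, and extend to $\A^+_\Hermitian$ by strong continuity and density before invoking downwards closedness. Most individual steps (the $\Unit-T$ trick for $\omega-\rho$, the powers-and-norm-limit argument for $T^{1/2}$, the normalisation remark) are fine. But there is one genuine gap: you construct $T$ and check the weak commutation on the GNS representation of the \emph{full} algebra $\A$ (your form is defined on $[b]_\omega$ with $b\in\A$, and your commutant computation runs over Hermitian $a\in\A$), and then invoke Corollary~\ref{corollary:main} and Lemma~\ref{lemma:restrict}. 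Those two results concern the closed GNS representation of the subalgebra $Q^\downarrow$, not of $\A$, and this distinction is exactly where the obstacle you yourself flagged (``no reason for a weak-commutant operator to preserve the domain'') actually bites.

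Concretely, Lemma~\ref{lemma:restrict} needs $\pi_\omega^\cl(Q)^\downarrow$ to be a \emph{closed} $O^*$\-/algebra on the domain, because its proof rests on the identity $\Dom = \bigcap_{q\in Q}\Dom[(q^2)^\dagger]$. Corollary~\ref{corollary:main} establishes this for the closed GNS representation of $Q^\downarrow$, whose domain is by construction the completion of the image of $Q^\downarrow$ in the topology generated by the seminorms $\seminorm{\pi_\omega^\cl(q^2)}{\argument}$, $q\in Q$ (Proposition~\ref{proposition:dominatedchar}). The closed GNS domain of $\A$ is instead the completion in the finer topology $\tau_{\pi_\omega(\A)}$, and completeness in a finer topology does not imply completeness in a coarser one; that domain is in general a proper subspace of $\bigcap_{q\in Q}\Dom[(\pi_\omega^\cl(q)^2)^\dagger]$. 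Your weak commutation relations --- even though you have them for all of $\pi_\omega(\A)$ --- only place $T\psi$ in $\bigcap_{a\in\A}\Dom[\pi_\omega^\cl(a)^\dagger]$, and since essential self-adjointness is available only for the operators $\pi_\omega^\cl(q)^2$ with $q\in Q$, not for arbitrary Hermitian elements of $\A$, there is no way to conclude $T\psi\in\bigcap_{a\in\A}\Dom[\pi_\omega^\cl(a)^\cl]$, i.e.\ that $T$ preserves the closed GNS domain of $\A$. (Passing instead to the GNS space of $Q^\downarrow$ does not rescue the argument as written: that space is a closed subspace of your $\Hilb_\omega$ which $T$ has no reason to map into itself.) The repair is the paper's choice: define the sesquilinear form and the operator from the start only on the GNS representation of $Q^\downarrow$ --- your commutation computation then runs verbatim over elements of $Q^\downarrow$ --- and recover positivity on all of $\A^+_\Hermitian$ at the very end, exactly by the strong-continuity-plus-density argument you already have.
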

\begin{proof}
  Given $\rho \in \A^*_\Hermitian$ and $\omega \in \Omega^+_\Hermitian$
  such that $\rho$ and $\omega-\rho$ are algebraically positive, then it is sufficient
  to show that $0 \le \dupr{\rho}{c}$ and $0 \le \dupr{\omega-\rho}{c}$ hold for all $c\in \A^+_\Hermitian$,
  because then $\rho\in \Omega^+_\Hermitian$ and $\omega-\rho \in \Omega^+_\Hermitian$
  as $(\A,\Omega)$ is downwards closed.

  Let $Q'\subseteq \A^+_\Hermitian$ be a non-empty set of coercive and pairwise commuting elements such that
  every $\omega \in \States(\A,\Omega)$ is a Stieltjes state for every $q'\in Q'$
  and such that $Q^\downarrow$ is strongly dense in $\A$, where $Q$ is the dominant
  set constructed out of $Q'$ like in Lemma~\ref{lemma:dominantconstruction}.
  Such a set $Q'$ exists because $(\A,\Omega)$ has sufficiently many Stieltjes states.
  Let $\pi_\omega \colon Q^\downarrow \to \Adbar(\Dom[\omega])$
  be the GNS representation of $Q^\downarrow$ associated to $\omega$
  and $\pi_\omega^\cl \colon Q^\downarrow \to \Adbar(\Dom[\omega]^\cl)$
  the corresponding closed GNS representation, and let $\Hilb_\omega$ be the 
  surrounding Hilbert space. Then
  $(\Dom[\omega])^2 \ni ([a]_\omega,[b]_\omega) \mapsto \hat{\rho}([a]_\omega,[b]_\omega) := \dupr{\rho}{a^*b} \in \CC$
  is a well-defined and bounded sesquilinear form: This is due to the observation that
  $0 \le \dupr{\rho}{a^*a} \le \dupr{\omega}{a^*a} = \seminorm[\big]{\omega}{[a]_\omega}^2$
  holds for all $a\in \A$, and especially
  if $a\in \Gelfand_\omega$, then $\dupr{\rho}{a^*b} = \dupr{\rho}{b^*a} = 0$ for all $b\in \A$
  by the Cauchy Schwarz inequality.
  
  As $\Dom[\omega]$ is dense in $\Hilb_\omega$ with respect to $\seminorm{\omega}{\argument}$, 
  the form $\hat{\rho}$ extends to a bounded sesquilinear form on $\Hilb_\omega$, 
  which is clearly Hermitian and positive,
  and so by the Lax-Milgram theorem there exists an $R\in \Adbar(\Hilb_\omega)^+_\Hermitian$
  such that $\dupr{\rho}{a^*b} = \hat{\rho}([a]_\omega,[b]_\omega) = \skal{[a]_\omega}{ R \,[b]_\omega}_\omega$
  holds for all $a,b\in Q^\downarrow$. This especially implies that
  \begin{equation*}
    \skal{[a]_\omega}{ R\, \pi_\omega(c)\,[b]_\omega}_\omega
    =
    \dupr{\rho}{a^*c\,b}
    =
    \dupr{\rho}{(c^* a)^*b}
    = 
    \skal{\pi_\omega(c)^* [a]_\omega}{ R \,[b]_\omega}_\omega
  \end{equation*}
  holds for all $a,b,c\in Q^\downarrow$,
  hence $\skal{\phi}{ R \,\pi_\omega^\cl(c)\,\psi}_\omega = \skal{\pi_\omega^\cl(c)^* \phi}{ R \,\psi}_\omega$
  for all $\phi,\psi \in \Dom[\omega]^\cl$ and all $c\in Q^\downarrow$
  because $\Dom[\omega]$ is $\tau_{\pi_\omega(Q^\downarrow)}$-dense
  in $\Dom[\omega]^\cl$ and because both sides are $\tau_{\pi_\omega(Q^\downarrow)}$\=/continuous
  in $\phi$ and $\psi$.
  Now let $\sqrt{R}\in\Adbar(\Hilb_\omega)^+_\Hermitian$ be the square root of $R$,
  i.e. $\smash{\sqrt{R}^2} = R$. Recall that $\sqrt{R}$ can be constructed as a $\seminorm{\omega}{\argument}$-limit 
  of polynomials of $R$, hence $\skal{\phi}{ \sqrt{R}\, \pi_\omega^\cl(c)\,\psi}_\omega =
  \skal{\pi_\omega^\cl(c)^* \phi}{ \sqrt{R}\, \psi}_\omega$ holds for all 
  $\phi,\psi \in \Dom[\omega]^\cl$ and all $c\in Q^\downarrow$,
  especially for $c=q^2$ with $q\in Q$. By Corollary~\ref{corollary:main},
  Lemma~\ref{lemma:restrict} can be applied and $\sqrt{R}$ can be restricted
  to a $\sqrt{r} \in \pi_\omega^\cl(Q)^\downarrow \subseteq \Adbar(\Dom[\omega]^\cl)$. 
  This yields a new representation of $\rho$ as
  \begin{equation*}
    \dupr{\rho}{c}
    =
    \skal{[\Unit]_\omega }{R \,\pi_\omega^\cl(c)\,[\Unit]_\omega}_\omega
    =
    \skal{\sqrt{r}\,[\Unit]_\omega }{\pi_\omega^\cl(c)\,\sqrt{r}\,[\Unit]_\omega}_\omega    
    =
    \skal{\xi }{\pi_\omega^\cl(c)\,\xi}_\omega
    \ge
    0
  \end{equation*}
  for all $c\in Q^\downarrow$ with $\xi := \sqrt{r}\,[\Unit]_\omega \in \Dom[\omega]^\cl$.
  From Lemma~\ref{lemma:gnspositive} it now follows that $\dupr{\rho}{c} \ge 0$ 
  for all $c\in (Q^\downarrow)^+_\Hermitian$. Moreover, 
  $\abs{\dupr{\rho}{a}} \le \dupr{\rho}{\Unit}^{1/2} \dupr{\rho}{a^*a}^{1/2} \le
  \dupr{\rho}{\Unit}^{1/2} \seminorm{\omega,\st}{a}$ holds for all
  $a\in \A$, so $\rho$ is strongly continuous. As $(Q^\downarrow)^+_\Hermitian$
  is strongly dense in $\A^+_\Hermitian$ by assumption,
  this implies that $\dupr{\rho}{c} \ge 0$ for all $c\in \A^+_\Hermitian$.
  Finally, note that $\rho' := \omega-\rho$ also fulfils the condition that 
  $\rho'$ and $\omega-\rho' = \rho$
  are algebraically positive, so the above also shows that 
  $\dupr{\omega-\rho}{c} = \dupr{\rho'}{c} \ge 0$ for all $c\in \A^+_\Hermitian$.
\end{proof}
\begin{corollary} \label{corollary:regularsufcond}
  Let $(\A,\Omega)$ be a downwards closed abstract $O^*$\=/algebra with sufficiently 
  many Stieltjes states, then $(\A,\Omega)$ is regular.
\end{corollary}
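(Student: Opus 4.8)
The plan is to derive this directly from Proposition~\ref{proposition:hyperregular}. By the definition of regularity I must show that every $\rho \in \Omega_\Hermitian$ which is algebraically positive already lies in $\Omega^+_\Hermitian$. The strategy is to exhibit an $\omega \in \Omega^+_\Hermitian$ for which $\omega - \rho$ is algebraically positive; once such an $\omega$ is available, Proposition~\ref{proposition:hyperregular} applies verbatim and yields $\rho \in \Omega^+_\Hermitian$.

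First I would use that, by Definition~\ref{definition:abstractOstaralgebra}, $\Omega$ is the linear span of $\Omega^+_\Hermitian$. Since $\rho = \rho^*$, I would write $\rho$ as a complex linear combination of elements of $\Omega^+_\Hermitian$ and symmetrise it, using that every $\sigma \in \Omega^+_\Hermitian$ satisfies $\sigma^* = \sigma$. This shows that $\rho$ is in fact a real linear combination $\rho = \sum_{k=1}^N \mu_k\,\omega_k$ with $\mu_k \in \RR$ and $\omega_k \in \Omega^+_\Hermitian$. I would then split this sum according to the sign of the coefficients and set $\omega := \sum_{k=1}^N \max\{\mu_k,0\}\,\omega_k$. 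Because $\Omega^+_\Hermitian$ is a convex cone, $\omega \in \Omega^+_\Hermitian$, and likewise $\omega - \rho = \sum_{k=1}^N \max\{-\mu_k,0\}\,\omega_k \in \Omega^+_\Hermitian$. In particular $\omega - \rho$ is positive, hence algebraically positive, since $\Omega^+_\Hermitian \subseteq \A^{*,+}_\Hermitian$ and every positive linear functional is algebraically positive.

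Finally, $\rho$ is algebraically positive by assumption and $\omega - \rho$ is algebraically positive by the previous step, with $\omega \in \Omega^+_\Hermitian$. Hence Proposition~\ref{proposition:hyperregular} applies and gives $\rho \in \Omega^+_\Hermitian$. As $\rho$ was an arbitrary algebraically positive element of $\Omega_\Hermitian$, this proves that $(\A,\Omega)$ is regular.

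I do not expect a genuine obstacle here, as all the substantial work has already been carried out in Proposition~\ref{proposition:hyperregular}. The only points requiring a little care are the reduction of a Hermitian element of $\Omega$ to a \emph{real} combination of positive functionals, and the observation that the resulting ``positive part'' $\omega$ dominates $\rho$ with $\omega - \rho$ staying inside $\Omega^+_\Hermitian$; both are immediate from $\Omega$ being the span of $\Omega^+_\Hermitian$ together with the cone property of $\Omega^+_\Hermitian$.
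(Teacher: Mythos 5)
Your proof is correct and follows essentially the same route as the paper: the paper also obtains an $\omega \in \Omega^+_\Hermitian$ dominating $\rho$ from the fact that $\Omega_\Hermitian$ is the real span of $\Omega^+_\Hermitian$, and then invokes Proposition~\ref{proposition:hyperregular}. The only difference is that you spell out the decomposition into positive and negative parts (and the symmetrisation to real coefficients), which the paper leaves implicit.
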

\begin{proof}
  Given an algebraically positive $\rho \in \Omega_\Hermitian$, then there exists
  an $\omega \in \Omega^+_\Hermitian$ such that $\omega-\rho \ge 0$, because $\Omega_\Hermitian$
  is the (real) linear span of $\Omega^+_\Hermitian$. As $\omega-\rho$ is especially algebraically
  positive, the previous Proposition~\ref{proposition:hyperregular} applies and $\rho\in\Omega^+_\Hermitian$.
\end{proof}
\begin{theorem} \label{theorem:unbounded}
  Let $(\A,\Omega)$ be a commutative and downwards closed abstract $O^*$\=/algebra
  with sufficiently many Stieltjes states, then $\PureStates(\A,\Omega) = \Characters(\A,\Omega)$.
\end{theorem}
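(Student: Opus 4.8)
The inclusion $\Characters(\A,\Omega) \subseteq \PureStates(\A,\Omega)$ is Proposition~\ref{proposition:charactersAreExtreme}, so the plan is to show that an arbitrary $\omega \in \PureStates(\A,\Omega)$ is multiplicative. Fix such an $\omega$, a set $Q'$ witnessing that $(\A,\Omega)$ has sufficiently many Stieltjes states, and the dominant set $Q$ built from $Q'$ as in Lemma~\ref{lemma:dominantconstruction}. I would reduce everything to a single statement about the (common) GNS Hilbert space $\Hilb_\omega$: that the cyclic vector $[\Unit]_\omega$ is a joint eigenvector of the operators $\pi_\omega^\cl(q)$, $q\in Q$. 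This reduction is purely topological: $\Var_\omega(a) = \seminorm{\omega,\st}{a}^2 - \abs{\dupr{\omega}{a}}^2$ is strongly continuous (both terms are), so by Lemma~\ref{lemma:var} and the remark following Proposition~\ref{proposition:extremal} the set $\{a\in\A \mid \Var_\omega(a)=0\}$ is a strongly closed unital $^*$-subalgebra, equal to $\A$ exactly when $\omega$ is a character; since $(Q^\downarrow)^+_\Hermitian$, hence $Q^\downarrow$, is strongly dense in $\A$, it suffices to prove $\Var_\omega(a)=0$ for all $a\in Q^\downarrow$.

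Next I would set up the spectral theory. By Corollary~\ref{corollary:main} each $\pi_\omega^\cl(q)^2$ is essentially self-adjoint, so every symmetric $\pi_\omega^\cl(q)$ is essentially self-adjoint with self-adjoint closure $T_q$; because $\pi_\omega^\cl(Q)^\downarrow$ is an abelian, closed, strictly self-adjoint $O^*$-algebra (Corollary~\ref{corollary:main} and Lemma~\ref{lemma:restrict}, which supply the bounded inverses $q^{-1}$), the family $\{T_q\}_{q\in Q}$ strongly commutes and carries a joint projection-valued spectral measure $E$, with scalar measure $\mu(\Delta):=\seminorm{}{E_\Delta[\Unit]_\omega}^2$. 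For a Borel set $\Delta$ the projection $E_\Delta$ commutes with every $T_q$, so Lemma~\ref{lemma:restrict} restricts it to a projection $e_\Delta\in\pi_\omega^\cl(Q)^\downarrow$ commuting with all of $\pi_\omega^\cl(Q^\downarrow)$.

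The heart of the argument is to convert each $E_\Delta$ into a positive functional dominated by $\omega$ and then apply extremality. I would define $\rho_\Delta\in\A^*$ by $\dupr{\rho_\Delta}{a} := \skal{E_\Delta[\Unit]_\omega}{[a]_\omega}_\omega$; this is strongly continuous, since $\abs{\dupr{\rho_\Delta}{a}}\le \seminorm{}{E_\Delta[\Unit]_\omega}\,\seminorm{\omega,\st}{a}$. On $(Q^\downarrow)^+_\Hermitian$, the commutation of $e_\Delta$ with $\pi_\omega^\cl(Q^\downarrow)$ gives $\dupr{\rho_\Delta}{c} = \skal{e_\Delta[\Unit]_\omega}{\pi_\omega^\cl(c)\,e_\Delta[\Unit]_\omega}_\omega \ge 0$ and $\dupr{\omega-\rho_\Delta}{c} = \skal{(\Unit-e_\Delta)[\Unit]_\omega}{\pi_\omega^\cl(c)\,(\Unit-e_\Delta)[\Unit]_\omega}_\omega \ge 0$. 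Because $(Q^\downarrow)^+_\Hermitian$ is strongly dense in $\A^+_\Hermitian$ and $\rho_\Delta,\omega$ are strongly continuous, the inequalities $0\le\dupr{\rho_\Delta}{c}\le\dupr{\omega}{c}$ persist for all $c\in\A^+_\Hermitian$; in particular $\rho_\Delta$ is Hermitian and both $\rho_\Delta$ and $\omega-\rho_\Delta$ are algebraically positive. Proposition~\ref{proposition:hyperregular} (this is where downward closedness is used) then places $\rho_\Delta\in\Omega^+_\Hermitian$ with $\rho_\Delta\le\omega$, and Lemma~\ref{lemma:extremal} forces $\rho_\Delta = \dupr{\rho_\Delta}{\Unit}\,\omega = \mu(\Delta)\,\omega$. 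Evaluating this on a generator $q\in Q$ and using $E_\Delta T_q = T_q E_\Delta$ yields $\int_\Delta \lambda\,d\mu_q(\lambda) = \mu_q(\Delta)\,\dupr{\omega}{q}$ for every Borel $\Delta$, where $\mu_q$ is the spectral measure of $T_q$ at $[\Unit]_\omega$; hence $\mu_q=\delta_{\dupr{\omega}{q}}$ and $T_q[\Unit]_\omega = \dupr{\omega}{q}\,[\Unit]_\omega$. Thus $[\Unit]_\omega$ is a joint eigenvector, so the abelian algebra $\pi_\omega^\cl(Q^\downarrow)$ acts on it by a single point evaluation and $\Var_\omega(a) = \seminorm{}{(\pi_\omega^\cl(a)-\dupr{\omega}{a}\Unit)[\Unit]_\omega}^2 = 0$ for every $a\in Q^\downarrow$, which by the first paragraph makes $\omega$ a character.

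I expect the main obstacle to be the operator-theoretic core just sketched: extracting strong commutativity and the joint spectral measure from the strictly self-adjoint structure, verifying that the spectral projections genuinely meet the hypotheses of Lemma~\ref{lemma:restrict}, and — most delicately — that the functionals $\rho_\Delta$, which are assembled from representation-level vectors $E_\Delta[\Unit]_\omega\in\Dom[\omega]^\cl$ rather than from elements of $\A/\Gelfand_\omega$, are nonetheless honest algebraically positive functionals on all of $\A$. The device bridging this gap is precisely the strong continuity of $\rho_\Delta$ combined with the strong density of $(Q^\downarrow)^+_\Hermitian$; getting that interplay exactly right, so that purity of $\omega$ (a convexity property of functionals) translates into atomicity of the spectral measure, is the crux.
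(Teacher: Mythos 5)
Your proof shares its skeleton and most of its supporting lemmas with the paper's argument (the easy inclusion via Proposition~\ref{proposition:charactersAreExtreme}; the reduction to multiplicativity on $Q^\downarrow$ via strong continuity of $\Var_\omega$ and strong density of $(Q^\downarrow)^+_\Hermitian$; the chain Lemma~\ref{lemma:restrict}, Proposition~\ref{proposition:hyperregular}, Lemma~\ref{lemma:extremal}), and your intermediate identity $\rho_\Delta = \mu(\Delta)\,\omega$ is a correct use of purity. The fatal problem is the very last step, which is exactly where you diverge from the paper: from $\mu_q = \delta_{\dupr{\omega}{q}}$ you conclude that $[\Unit]_\omega$ is a joint eigenvector of the $T_q$, $q\in Q$, and then assert that therefore ``$\pi_\omega^\cl(Q^\downarrow)$ acts on it by a single point evaluation''. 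That implication is false: being an eigenvector of every $T_q$ with $q\in Q$ says nothing about the action of the strictly larger algebra $Q^\downarrow$, since the joint eigenspace of the $T_q$ need not be one-dimensional and need not consist of eigenvectors of $\pi_\omega^\cl(a)$ for $a\in Q^\downarrow$. Concretely, take $\A = \CC[x]$ with the order coming from the point evaluations as in Proposition~\ref{proposition:construction}, $Q' = \{\Unit+x^2\}$, so that $Q = \set{\lambda(\Unit+x^2)^N}{\lambda\ge1,\,N\in\NN}$ and $x\in Q^\downarrow$, and let $\omega = \tfrac{1}{2}(\delta_1+\delta_{-1})$. Its GNS space is two-dimensional, every $\pi_\omega(q)$ is multiplication by the constant $\lambda 2^N$, so $[\Unit]_\omega$ is a joint eigenvector of all of $\pi_\omega(Q)$, yet $\Var_\omega(x) = 1 \neq 0$. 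This $\omega$ is of course not pure, so the theorem is untouched; but your final step is invoked \emph{after} purity has been spent on deriving the eigenvector property, and the eigenvector property alone is strictly weaker than multiplicativity on $Q^\downarrow$. Any repair has to re-inject extremality at the level of all of $Q^\downarrow$ -- for instance through a domination estimate $a\acts\omega \le C_a\,\omega$ and Proposition~\ref{proposition:extremal}, which is precisely what the paper does with functionals built from the vector $\pi_\omega^\cl(q)^{-1}[\Unit]_\omega$ instead of from spectral projections.

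Two intermediate steps are also stated as if covered by the cited results but are not, though both are repairable. First, Corollary~\ref{corollary:main} gives essential self-adjointness of $\pi_\omega^\cl(q)^2$, not of $\pi_\omega^\cl(q)$; you need the (true but nontrivial) fact that a symmetric operator with essentially self-adjoint square is essentially self-adjoint, or you should work with the self-adjoint operator $\big(\,\cc{\pi_\omega^\cl(q)^{-1}}\,\big)^{-1}$ built from the bounded inverse of Lemma~\ref{lemma:restrict}. Second, and more seriously, Lemma~\ref{lemma:restrict} only yields that $e_\Delta$ commutes with the elements of $\pi_\omega^\cl(Q)$, whereas your positivity computation for $\rho_\Delta$ and $\omega-\rho_\Delta$ requires commutation of $E_\Delta$ with $\pi_\omega^\cl(c)$ for \emph{all} $c\in Q^\downarrow$; pointwise commutation of unbounded operators does not upgrade automatically to commutation with spectral projections (Nelson's example), so this needs its own argument -- e.g.\ that $\pi_\omega^\cl(q)^{-1}$ commutes with each $\pi_\omega^\cl(c)$ on $\Dom[\omega]^\cl$, that this survives passing to bounded closures, and that the spectral projections of $T_q$ lie in the von Neumann algebra generated by $\cc{\pi_\omega^\cl(q)^{-1}}$. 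The same device (commuting bounded inverses) is what would justify your unproven strong-commutativity claim for the family $\{T_q\}_{q\in Q}$. Note, however, that once all these repairs are made, the objects doing the work are exactly the paper's: the spectral measure is a detour whose output, by the first paragraph, is in any case insufficient to finish the proof.
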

\begin{proof}
  As $\PureStates(\A,\Omega) \supseteq \Characters(\A,\Omega)$ by Proposition~\ref{proposition:charactersAreExtreme},
  it only remains to show
  that every pure state $\omega$ of $(\A,\Omega)$ is multiplicative. 
  By assumption, there exists
  a non-empty set $Q'\subseteq \A^+_\Hermitian$ of coercive and pairwise commuting elements such that
  every $\omega \in \States(\A,\Omega)$ is a Stieltjes state for every $q'\in Q'$
  and such that $(Q^\downarrow)^+_\Hermitian$ is strongly dense in 
  $\A^+_\Hermitian$, where $Q$ is the dominant
  set constructed out of $Q'$ like in Lemma~\ref{lemma:dominantconstruction}.
  Note that this implies that $Q^\downarrow$ is strongly dense in $\A$ because
  $\A$ is the linear span of $\A^+_\Hermitian$.
  
  Let $\omega\in\PureStates(\A,\Omega)$ be given and let 
  $\pi_\omega^\cl \colon Q^\downarrow \to \Adbar(\Dom[\omega]^\cl)$
  be the closed GNS representation associated to $\omega$.
  Then Corollary~\ref{corollary:main} shows that
  Lemma~\ref{lemma:restrict} applies to the dominant set
  $\pi^\cl_\omega(Q) \subseteq \Adbar(\Dom[\omega]^\cl)$ and there exists an inverse
  $\pi^\cl_\omega(q)^{-1} \in \pi^\cl_\omega(Q)^\downarrow \subseteq \Adbar(\Dom[\omega]^\cl)$
  of $\pi^\cl_\omega(q)$ for every $q\in Q$. 
  
  Let $a\in Q^\downarrow$ be given, then there exists a $q\in Q$ such that 
  $\Unit+a^*a \lesssim q^2$ holds, so especially 
  $b^*b = b^*q^2 b - b^*(q^2-\Unit)\,b \lesssim  b^*q^2b = qb^*bq$ for all $b\in \A$.
  Define the linear functional $\tilde{\rho}_q \colon Q^\downarrow \to \CC$, 
  \begin{equation*}
    b
    \mapsto 
    \dupr{\tilde{\rho}_q}{b} 
    :=
    \skal[\big]{
      \pi^\cl_\omega(q)^{-1} [\Unit]_\omega
    }{
      \pi^\cl_\omega(b) \, \pi^\cl_\omega(q)^{-1} [\Unit]_\omega
    }\,,
  \end{equation*}
  then $\dupr{\tilde{\rho}_q}{b} \ge 0$ for all $b\in (Q^\downarrow)^+_\Hermitian$
  and $\dupr{\tilde{\rho}_q}{b^*b} \le \dupr{\tilde{\rho}_q}{qb^*bq} = \dupr{\omega}{b^*b}$
  for all $b\in Q^\downarrow$ by Lemma~\ref{lemma:gnspositive}. 
  
  Now $\big(Q^\downarrow, \Omega|_{Q^\downarrow}\big)$ with $\Omega|_{Q^\downarrow}$
  the restriction of all funtionals in $\Omega$ to $Q^\downarrow$ is again
  an abstract $O^*$-algebra with sufficiently many Stieltjes states and even downwards
  closed, as every positive linear functional on $Q^\downarrow$ dominated by a functional
  in $\Omega^+_\Hermitian$ is strongly continuous and thus extends to a functional
  in $\Omega^+_\Hermitian$. So the previous Proposition~\ref{proposition:hyperregular}
  shows that $\tilde{\rho}_q \in (\Omega|_{Q^\downarrow})^+_\Hermitian$ and
  $\omega|_{Q^\downarrow}-\tilde{\rho}_q\in (\Omega|_{Q^\downarrow})^+_\Hermitian$,
  so by the same argument as before, $\tilde{\rho}_q$ is the restriction of some 
  $\rho_q \in \Omega^+_\Hermitian$ and $\rho_q \le \omega$, hence $\rho_q = \dupr{\rho_q}{\Unit} \, \omega$
  by Lemma~\ref{lemma:extremal}.

  Moreover, $q\acts \rho_q \in \Omega^+_\Hermitian$ is also strongly
  continuous, and so it follows from $\dupr{q\acts \rho_q}{b} = \dupr{\omega}{b}$
  for all $b\in \A$ that $q\acts \rho_q = \omega$. Consequently,
  \begin{equation*}
    \dupr{\omega-(a\acts\rho_q)}{b^*b} 
    =
    \dupr{\omega}{b^*b} - \dupr{\rho_q}{b^*a^*a\,b} 
    \ge
    \dupr{\omega}{b^*b} - \dupr{\rho_q}{b^*q^2 b}
    =
    \dupr{\omega}{b^*b} - \dupr{q\acts\rho_q}{b^*b}
    =
    0
  \end{equation*}
  holds for all $b\in \A$ and shows that $a\acts \rho_q \le \omega$ because
  $(\A,\Omega)$ is regular by the previous Corollary~\ref{corollary:regularsufcond}.
  But this yields 
  $a\acts \omega = \dupr{\rho_q}{\Unit}^{-1} (a \acts \rho_q) \le \dupr{\rho_q}{\Unit}^{-1} \omega$,
  where $\dupr{\rho_q}{\Unit}^{-1} > 0$ because $\rho_q \neq 0$ due to $q\acts \rho_q = \omega$,
  and then Proposition~\ref{proposition:extremal} shows that $\omega$ is multiplicative
  on $Q^\downarrow$.
  
  Finally, $\omega$ is multiplicative on whole $\A$ because 
  $Q^\downarrow$ is strongly dense in $\A$ and because 
  $\Var_\omega\colon \A \to \CC$ is strongly continuous and vanishes on 
  $Q^\downarrow$, hence on whole $\A$ (see Lemma~\ref{lemma:var}).
\end{proof}
\begin{example} \label{example:2}
  Let $\A$ be a locally convex $^*$\=/algebra and $(\A,\Omega)$ the corresponding
  abstract $O^*$\=/algebra like in Example~\ref{example:1}. If the product on $\A$
  is continuous, then $(\A,\Omega)$ is downwards closed. 
  Moreover, assume that there exists an
  upwards directed set of continuous seminorms $\mathcal{P}$ on $\A$, that defines the
  topology of $\A$, and a subset
  $Q'\subseteq \A^+_\Hermitian$ of pairwise commuting and coercive elements with the following properties:
  \begin{itemize}
    \item For all $\seminorm{p}{\argument} \in \mathcal{P}$ and all $q'\in Q'$,
    $\seminorm{p}{(q')^n} = 0$ holds for one $n\in \NN$ or $\sum_{n=1}^\infty \seminorm{p}{(q')^n}^{-1/(2n)} = \infty$.
    \item $Q^\downarrow$ is dense in $\A$, where $Q$ is the dominant set constructed
    out of $Q'$ like in Lemma~\ref{lemma:dominantconstruction}.
  \end{itemize}
  Then $(\A,\Omega)$ has sufficiently many Stieltjes states and 
  $\PureStates(\A,\Omega) = \Characters(\A,\Omega)$ by the previous 
  Theorem~\ref{theorem:unbounded} if $\A$ is commutative.
\end{example}
\begin{proof}
  Let a continuous algebraically positive linear functional $\omega$ on $\A$ be given
  as well as an algebraically positive linear functional $\rho$ on $\A$ such that
  $\omega-\rho$ is also algebraically positive. Then 
  $\abs{\dupr{\rho}{a}} \le \dupr{\rho}{\Unit}^{1/2} \dupr{\rho}{a^*a}^{1/2} \le \dupr{\rho}{\Unit}^{1/2} \dupr{\omega}{a^*a}^{1/2}$,
  and if the product on $\A$ is continuous, then the seminorm $\A \ni a \mapsto \seminorm{\omega,\st}{a} = \dupr{\omega}{a^*a}^{1/2}$
  is continuous, which shows that $\rho$ has to be continuous as well. So $(\A,\Omega)$
  is especially downwards closed and the topology on $\A$ is stronger than the strong one.
  
  If there exists an upwards directed set of continuous seminorms $\mathcal{P}$ on $\A$ and
  a subset $Q'\subseteq \A^+_\Hermitian$ with the properties stated above, then
  for every $\omega\in\States(\A,\Omega)$
  there are $\seminorm{p}{\argument}\in\mathcal{P}$ and $C\in[1,\infty[\,$ such that
  $\abs{\dupr{\omega}{a}} \le C\,\seminorm{p}{a}$ holds for all $a\in \A$. Especially
  for all $q'\in Q'$ this implies that $\dupr{\omega}{(q')^n} = 0$ holds for one $n\in \NN$
  (and thus for all $n\in \NN$ by Lemma~\ref{lemma:growth})
  or 
  \begin{equation*}
    \sum_{n=1}^\infty \dupr{\omega}{(q')^n}^{-1/(2n)} \ge \sum_{n=1}^\infty C^{-1/(2n)} \seminorm{p}{(q')^n}^{-1/(2n)}
    \ge C^{-1} \sum_{n=1}^\infty \seminorm{p}{(q')^n}^{-1/(2n)} = \infty\,,
  \end{equation*}
  so every state of $(\A,\Omega)$ is a Stieltjes state for every $q'\in Q'$.
  
  Finally, $(Q^\downarrow)^{++}_\Hermitian$ is dense in $\A^{++}_\Hermitian$, 
  hence in $\A^+_\Hermitian$ (see Example~\ref{example:1}):
  For every $\hat{a}\in \A$ there exists
  a net $(a_i)_{i\in I}$ in $Q^\downarrow$ over an upwards directed set $I$ that
  converges against $\hat{a}$, because $Q^\downarrow$ is dense in $\A$ by assumption,
  and so the net $(a_i^*a_i)_{i\in I}$ in 
  $(Q^\downarrow)^{++}_\Hermitian$ 
  converges against $\hat{a}^*\hat{a}$. As the strong topology on $\A$
  is weaker than the given one and 
  $(Q^\downarrow)^{++}_\Hermitian \subseteq (Q^\downarrow)^+_\Hermitian$, it follows
  that $(Q^\downarrow)^+_\Hermitian$ is strongly dense in $\A^+_\Hermitian$, i.e.
  $(\A,\Omega)$ has sufficiently many Stieltjes states.
\end{proof}
A locally convex, but not locally multiplicatively convex $^*$\=/algebra 
that fulfils the conditions of the above Example~\ref{example:2}
has been constructed in \cite{schoetz.waldmann:2018a}
(see the growth estimates for seminorms in Lemma~3.34 there)
in the context of non-formal deformation quantisation. 
This example describes the usual (convergent) star products of exponential type
for a space of finitely or infinitely many degrees of freedom and can contain 
elements $P,Q$ having canonical commutation relations $QP-PQ = \I \hbar \Unit$.
This rules out the existence of non-trivial submultiplicative seminorms in
the non-commutative case. Even in the commutative classical limit, the topology
does not become submultiplicative and hence the usual approach to pure states
like in Example~\ref{example:1} does not apply. Nevertheless, this case can still
be dealt with by the methods used in Example~\ref{example:2} and Theorem~\ref{theorem:unbounded}.

\bigskip

{
  \footnotesize
  \renewcommand{\arraystretch}{0.5}
  
}

\end{onehalfspace}
\end{document}